\address[1]{Department of Computer and Information Sciences, Texas A\&M University--Victoria, Victoria, 
 TX 77901, USA;
 guoh@tamuv.edu
}
\abstract{
Tarski's first-order axiom system $\mathscr{E}_{2}$ for Euclidean
geometry is notable for its completeness and decidability. However,
the Pythagorean theorem---either in its modern algebraic form $a^{2}+b^{2}=c^{2}$
or in Euclid's \emph{Elements}---cannot be directly expressed in
$\mathscr{E}_{2}$, since neither distance nor area is a primitive
notion in the language of $\mathscr{E}_{2}$. In this paper, we introduce
an alternative axiom system $\mathscr{E}_{d}$ in a two-sorted language,
which takes a two-place distance function $d$ as the only geometric
primitive. We also present a conservative extension $\mathscr{E}_{da}$
of it, which also incorporates a three-place angle function $a$,
both formulated strictly within first-order logic. The system $\mathscr{E}_{d}$
has two distinctive features: it is simple (with a single geometric
primitive) and it is quantitative. Numerical distance can be directly
expressed in this language. The \emph{Axiom of Similarity} plays a
central role in $\mathscr{E}_{d}$, effectively killing two birds
with one stone: it provides a rigorous foundation for the theory of
proportion and similarity, and it implies \emph{Euclid\textquoteright s
Parallel Postulate} (EPP). The Axiom of Similarity can be viewed as
a quantitative formulation of EPP. The Pythagorean theorem and other
quantitative results from similarity theory can be directly expressed
in the languages of $\mathscr{E}_{d}$ and $\mathscr{E}_{da}$, motivating
the name \emph{Quantitative Euclidean Geometry}. The traditional analytic
geometry can be united under synthetic geometry in $\mathscr{E}_{d}$.
Namely, analytic geometry is not treated as a model of $\mathscr{E}_{d}$,
but rather, its statements can be expressed as first-order formal
sentences in the language of $\mathscr{E}_{d}$. The system $\mathscr{E}_{d}$
is shown to be consistent, complete, and decidable. Finally, we extend
the theories to hyperbolic geometry and  Euclidean geometry in higher
dimensions.
}
\newtheorem{MyDefinition}{Definition}
\newtheorem{MyCorollary}{Corollary}
\newtheorem{MyLemma}{Lemma}
\newtheorem{thm}{Theorem}%
\begin{document}


\noindent\vspace{-18pt}

\section*{{Contents}}
1. Introduction

2. Theory $\mathscr{E}_{d}$---Quantitative Euclidean Geometry with
a Single Geometric Primitive

\hspace{12pt}Notion---Distance Function $d$

3. Analytic Geometry United under Synthetic Geometry in $\mathscr{E}_{d}$

4. Theory $\mathscr{E}_{da}$---A Conservative Extension of $\mathscr{E}_{d}$
with Angle Function $a$

5. Consistency of $\mathscr{E}_{d}$ and $\mathscr{E}_{da}$

6. Theory $\mathscr{E}_{d}$ and Tarski's $\mathscr{E}_{2}$ Are Mutually Interpretable
with Parameters

7. Completeness and Decidability of $\mathscr{E}_{d}$

8. Theory $\mathscr{H}_{da}$---Quantitative Hyperbolic Geometry

9. Theories $\mathscr{E}_{d}^{n}$ and $\mathscr{E}_{da}^{n}$---Quantitative
Euclidean Geometry in Higher Dimensions

Appendix \ref{appendix:a}. Axioms of Real Closed Fields (RCF)

Appendix \ref{appendix:b}. Tarski's Axioms of Plane Euclidean Geometry $\mathscr{E}_{2}$

Appendix \ref{appendix:c}. SMSG Axioms of Euclidean Geometry

Appendix \ref{appendix:d}. The Convoluted Statement of the Pythagorean Theorem in Tarski's $\mathscr{E}_2$

\section{Introduction}\label{sec:Section-1}

For more than two thousand years, Euclid\textquoteright s \emph{Elements}
\cite{Heath} stood as a standard of logical rigor. In the late nineteenth
century, however, its shortcomings were exposed, prompting efforts
to rebuild Euclidean geometry on firmer foundations. Two milestones
were Hilbert\textquoteright s \emph{Grundlagen der Geometrie} (1899)
\cite{Hilbert} and the axiom system $\mathscr{E}_{2}$ developed
by Tarski and his students \cite{Tarski-1,Tarski-2,Schwabhauser}.
Hilbert used natural language, while Tarski introduced a  first-order formal
system ``which can be formulated and established without the help
of any set-theoretical devices'' \citep{Tarski-1} (p.~16).

Tarski's $\mathscr{E}_{2}$ is formulated in a 1-sorted language:
the only primitive objects are points. There are two primitive predicate
symbols---a 3-place predicate symbol $B$ for betweenness, and a
4-place predicate symbol $D$ for segment congruence---with $B(\mathbf{pvq})$
meaning that $\mathbf{v}$ lies between $\mathbf{p}$ and $\mathbf{q}$,
and $D(\mathbf{pquv})$ meaning segment $\mathbf{pq}$ (as a pair
of end points) is congruent to segment $\mathbf{uv}$. This system
has 11 axioms \cite{Tarski-2,Schwabhauser} (including one axiom schema),
which are listed in Appendix \ref{appendix:b}. Tarski established
that $\mathscr{E}_{2}$ is complete and decidable.

What part of the content in the \emph{Elements} is not formalized
in $\mathscr{E}_{2}$? Notably, $\mathscr{E}_{2}$ contains no notion
of area---not even for polygons. In modern mathematics, area is a
set function assigning a real number to every measurable set. Even
when restricted to polygons, it still requires set-theoretic machinery,
which Tarski deliberately avoided in $\mathscr{E}_{2}$. 

The area function of polygons could be discussed in second-order logic,
or in first-order logic with two sorts---one sort for points and
the other for finite sets of points. The latter approach was discussed
by Tarski in his 1959 paper \cite{Tarski-1}, with a sketch of a system
$\mathscr{E}_{2}'$ as a possible extension of $\mathscr{E}_{2}$:
\begin{quotation}
``The theory $\mathscr{E}_{2}'$ is obtained by supplement the logical
base of $\mathscr{E}_{2}$ with a small fragment of set theory. Specifically,
we include in the symbolism of $\mathscr{E}_{2}'$ new variables $X,Y,\ldots$
assumed to range over arbitrary finite sets of points (or, what in
this case amounts essentially to the same, over arbitrary finite sequences
of points); we also include a new logical constant, the membership
symbol $\in$, to denote the membership relation between points and
finite point sets. \ldots{} In consequence the theory of $\mathscr{E}_{2}'$
considerably exceeds $\mathscr{E}_{2}$ in means of expression and
power. In $\mathscr{E}_{2}'$ we can formulate and study various notions
which are traditionally discussed in textbooks of elementary geometry
but which cannot be expressed in $\mathscr{E}_{2}$; e.g., the notions
of a polygon with arbitrarily many vertices, and of the circumference
and the area of a circle.

As regards metamathematical problems which have been discussed and
solved for $\mathscr{E}_{2}$ in Theorems 1--4, three of them---the
problems of representation, completeness, and finite axiomatizability---are
still open when referred to $\mathscr{E}_{2}'$. In particular, we
do not know any simple characterization of all models of $\mathscr{E}_{2}'$,
nor, do we know whether any two such models are equivalent with respect
to all sentences formulated in $\mathscr{E}_{2}'$.''
\end{quotation}
Tarski did, however, resolve the decision problem for $\mathscr{E}_{2}'$
in his Theorem 5 \cite{Tarski-1}:
\begin{quotation}
Theorem 5. The theory $\mathscr{E}_{2}'$ is undecidable, and so are
all of its consistent extensions.
\end{quotation}
\noindent This follows from the fact that Peano arithmetic (PA) is
(relatively) interpretable in $\mathscr{E}_{2}'$. 

Note that in the same paper \cite{Tarski-1}, Tarski discussed several
axiom systems, which he denoted $\mathscr{E}_{2}$, $\mathscr{E}{}_{2}'$,
$\mathscr{E}_{2}''$, and $\mathscr{E}_{2}'''$, but with his primary
focus on $\mathscr{E}_{2}$. Each of these, in his own words, qualifies
as a feasible interpretation of ``elementary geometry'', and the
problem of deciding which is the unique one ``seems to be rather
hopeless and deprived of broader interest''. For clarity, we retain
Tarski\textquoteright s original symbols when referring to these systems,
since terms such as ``\emph{Tarski\textquoteright s geometry}''
or ``\emph{elementary geometry}'' are ambiguous. Accordingly, claims
like \textquotedblleft Euclidean geometry is complete and decidable\textquotedblright{}
or \textquotedblleft elementary Euclidean geometry is complete and
decidable\textquotedblright{} are inaccurate, as the systems $\mathscr{E}{}_{2}'$,
$\mathscr{E}_{2}''$, and $\mathscr{E}_{2}'''$ provide immediate
counterexamples.

Having considered the role of area, we now turn to the theory of proportion
and similarity, which deals with quantitative relations such as segment
lengths. This is the focus of the present paper, which we refer to
as \emph{Quantitative Euclidean Geometry}. In informal treatments
of geometry, it is usually taken for granted that every line segment
has a length, which is a real number. The Pythagorean theorem provides
a good example to illustrate this point. In its modern form, the theorem
asserts
\begin{equation}
\overline{\mathbf{ab}}^{2}+\overline{\mathbf{ac}\vphantom{b}}^{2}=\overline{\mathbf{bc}}^{2}\label{eq:Pythagorean-1}
\end{equation}
for a right triangle, where the bars denote numerical segment lengths.
Ancient civilizations also knew this relation. The Egyptians reportedly
used a knotted rope to form a 3-4-5 right triangle. In China, it appeared
as the\emph{ gou-gu} theorem (or Gougu theorem). \emph{Gou} and \emph{gu}
refer to the shorter and longer legs of a right triangle respectively,
since \emph{gu} means \textquotedblleft thigh\textquotedblright{}
in Classical Chinese. Euclid, however, expressed the theorem differently.
   In what follows, we examine the possible approaches to the
Pythagorean theorem: what has been done, what can be done differently,
and what cannot be done within first-order logic and without any appeal
to set theory.
\textbf{\begin{enumerate}
\item[(1)] The Area Approach by Euclid
\end{enumerate}}

The Pythagorean theorem in the \emph{Elements} is stated as
follows \cite{Heath}:
\begin{quotation}
Proposition I.47 ~~In right-angled triangles the square on the side
subtending the right angle is equal to the squares on the sides containing
the right angle (Figure \ref{fig:Pythagorean-Euclid}).
\end{quotation}
\begin{center}
\vspace{-12pt}\includegraphics[scale=0.19]{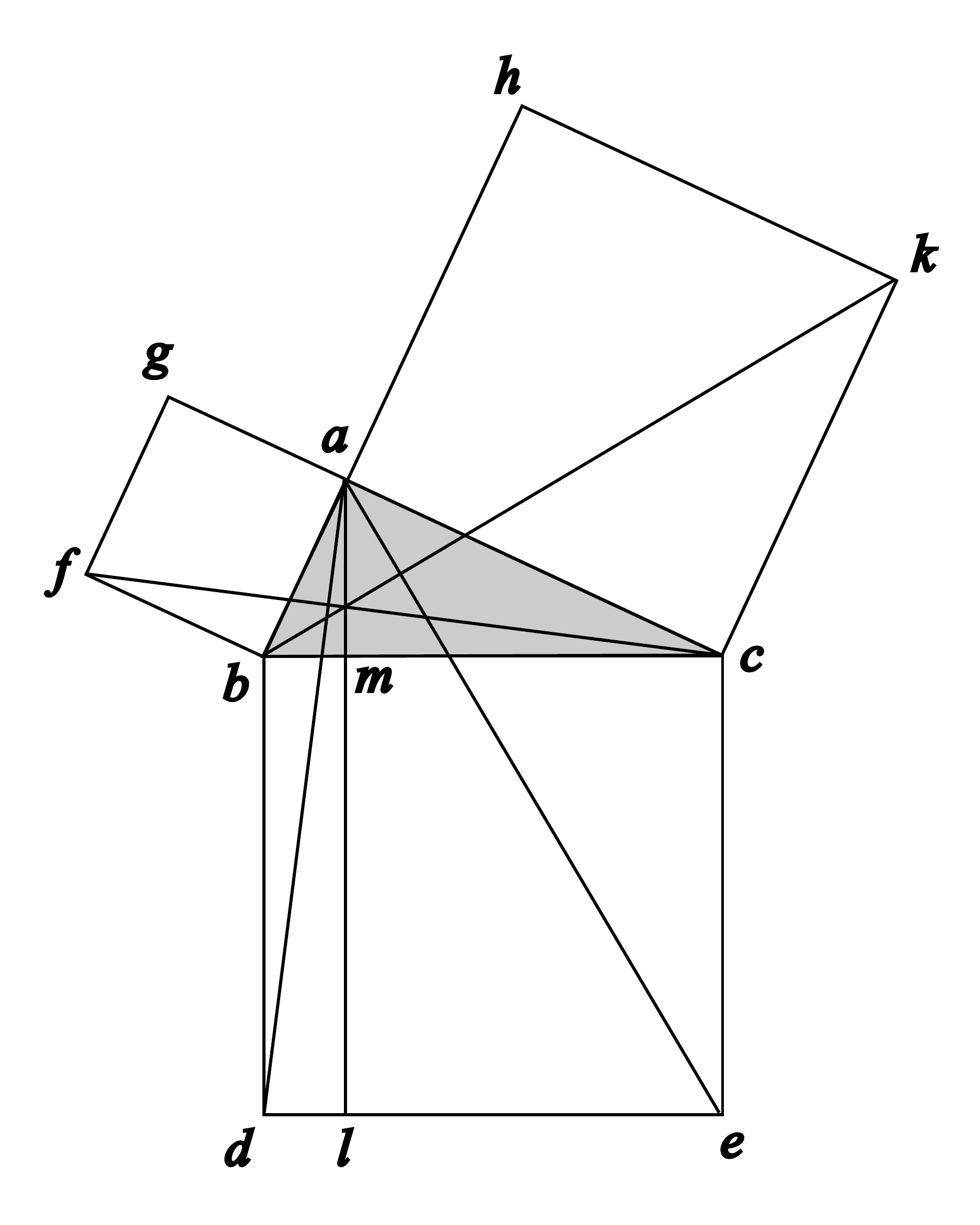}
\par\end{center}

\vspace{-15pt}
\begin{center}
\captionsetup{width=0.9\textwidth}
\captionof{figure}{Euclid: the Pythagorean theorem---The notion of area is used to state the theorem.} \label{fig:Pythagorean-Euclid}
\end{center}
\vspace{-5pt}

For Euclid, the square on $\mathbf{ab}$ meant a geometric figure (a rectangle of equal sides),
not the numerical product of a length and itself.

What, then, did he mean by saying that one square is \emph{equal to}
two other squares? Euclid did not provide an explicit definition,
yet one can infer it from his proof:

\textbf{Theorem}. (Pythagoras-Euclid I.47) In a right triangle, the
square on the hypotenuse can be dissected into $n$ triangles, and
the union of the two squares on the legs can also be dissected into
$n$ triangles with the resulting triangles congruent in pairs.

This formulation cannot be expressed in Tarski\textquoteright s language
$L(\mathscr{E}_{2})$, because $\mathscr{E}_{2}$ does not contain
a notion of area.

Hilbert developed a theory of area of polygons. His language is informal,
and he uses the concept of set freely. Hilbert defines two predicates:
\emph{equidecomposable} and \emph{equicomplementable}. Consider his
definition of ``equicomposable'':
\begin{quotation}
Definition. Two polygons are called \emph{equidecomposable} if they
can be decomposed into \emph{a finite number} of triangles that are
congruent in pairs.
\end{quotation}
In Hilbert's terms, what Euclid proved in his I.47 is actually: In
a right triangle, the square on the hypotenuse is equidecomposable
with the union of the two squares on the legs. However, \textquotedblleft a
finitely number\textquotedblright{} refers to a natural number, which
is not in the language of $\mathscr{E}_{2}$. The set of natural numbers
($\mathbb{N}$) is not definable in $\mathscr{E}_{2}$ or in real
closed fields (RCF). This helps explain why both $\mathscr{E}_{2}$
and RCF are complete and decidable, while the first-order Peano arithmetic
of natural numbers is not.

While Hilbert's theory of area is non-numerical, dealing with equidecomposable
and equicomplementable relations only, we may have a theory of area
in the numerical form (see Hartshorne \cite{Hartshorne}). However,
an area function that assigns a number to each polygon is a \emph{set
function}, which cannot be part of a first-order language similar
to $L(\mathscr{E}_{2})$. Consider, for example, a theorem in Hartshorne
\citep{Hartshorne} (p.~206):
\begin{quotation}
Theorem 23.2 ~~In a Hilbert plane with (P), there is an area function 
$\alpha$, with values in the additive group of the field of segment
arithmetic $F$, that satisfies and is uniquely determined by the
following additional condition: For any triangle $ABC$, whenever
we choose one side $AB$ to be the base and let it have length $b\in F$,
and let $h$ be the length of an altitude perpendicular to the base,
then $\alpha(ABC)=\frac{1}{2}bh$.
\end{quotation}
A formal first-order language prohibits quantification of functions
(such as ``there exists a function''),
not to mention that the area function $\alpha$ is a higher-order
function, meaning its domain is a set of sets.

Thus, the area approach is not viable for us: like Tarski, our aim
is to avoid the use of set theory and remain within a first-order
framework.

(As a side note, Hartshorne defines a Hilbert plane as one that satisfies Hilbert’s first three groups of axioms, and by (P) he means the axiom of parallels. Thus, this is simply a plane satisfying Hilbert’s first four groups of axioms, but without the axioms of continuity.)

\textbf{\begin{enumerate}
\item[(2)] The ``Segment Arithmetic'' Approach by Hilbert and SST
\end{enumerate}}

Here and in what follows, we abbreviate Schwabh\"{a}user, Szmielew,
and Tarski \cite{Schwabhauser} as ``SST''.

The Pythagorean theorem can also be formulated as the length relationship
in \mbox{Equation (\ref{eq:Pythagorean-1})}, and proved using similar
triangles. But how do Euclid, Hilbert, and Tarski treat the foundations
of proportion and similarity?

The theory of similarity is based on the theorem of Thales, also known
as the \emph{Fundamental Theorem of Proportion} or the \emph{Fundamental
Theorem of Similarity}. This is Proposition VI.2 in the \emph{Elements}:
\begin{quotation}
Proposition VI.2 ~~If a straight line be drawn parallel to one of
the sides of a triangle, it will cut the sides of the triangle proportionally;
and, if the sides of the triangle be cut proportionally, the line
joining the points of section will be parallel to the remaining side
of the triangle (Figure \ref{fig:Euclid-proportion}).
\end{quotation}
\begin{center}
\includegraphics[scale=0.21]{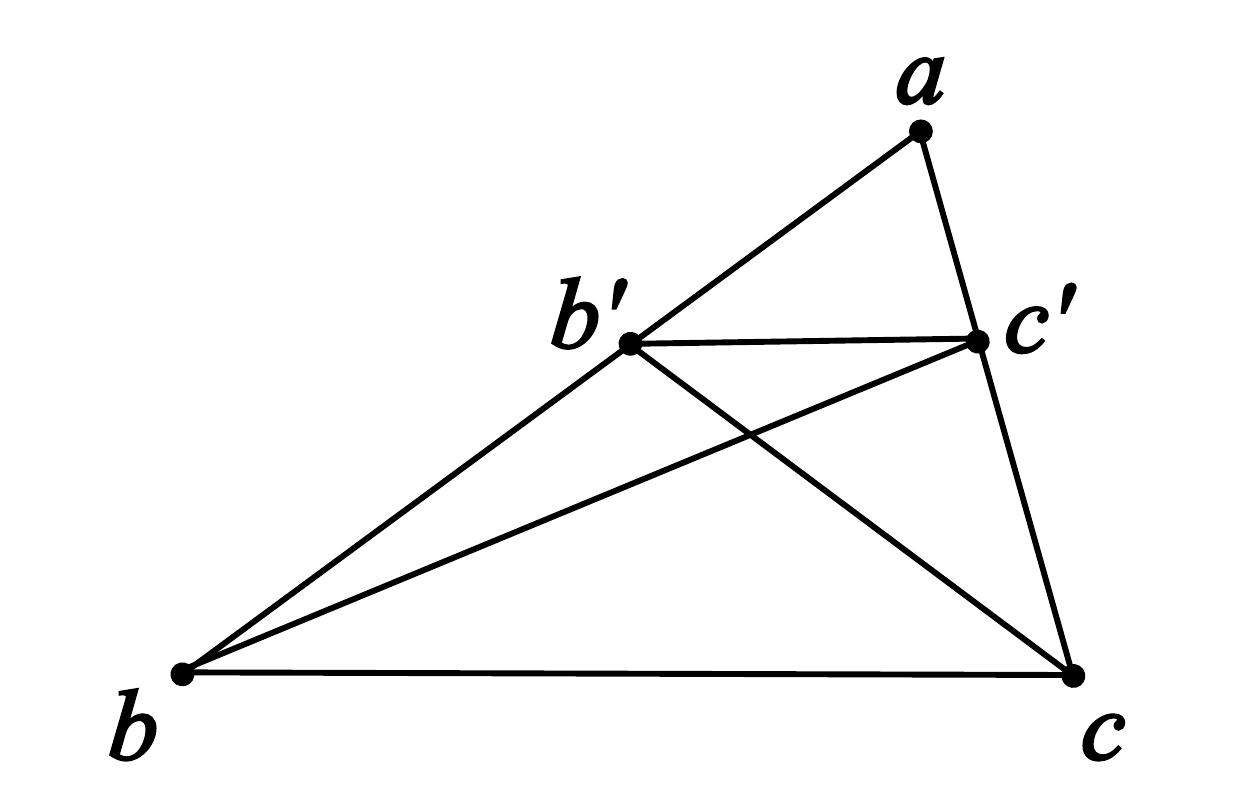}
\par\end{center}

\vspace{-15pt}
\begin{center}
\captionsetup{width=1.0\textwidth}
\captionof{figure}{Euclid: Fundamental Theorem of Proportion---The notion of area is not used to state the theorem but used in the proof.} \label{fig:Euclid-proportion}
\end{center}
\vspace{-5pt}

Euclid used the theory of area to prove this Fundamental Theorem of
Proportion. He argued that $\triangle\mathbf{b'bc'}$ and $\triangle\mathbf{c'cb'}$
have ``equal-area'' because they lie on the same base $\mathbf{b'c'}$
and are contained between the same parallels $\mathbf{b'c'}$ and
$\mathbf{bc}$. If we use $A(\mathbf{\mathbf{b'bc'}})$ to denote
the area of $\triangle\mathbf{\mathbf{b'bc'}}$, this can be written
as $A(\mathbf{b'bc'})=A(\mathbf{c'cb'})$. Consider $\triangle\mathbf{ab'c'}$
and $\triangle\mathbf{bb'c'}$. Their bases $\mathbf{ab'}$ and $\mathbf{bb'}$
are on the same line, and they have the same altitude. Therefore,
$A(\mathbf{ab'c'})/A(\mathbf{b'bc'})=\overline{\mathbf{ab'}}/\overline{\mathbf{b'b}}$.
Similarly, if we consider $\triangle\mathbf{ab'c'}$ and $\triangle\mathbf{c'cb'}$,
we can conclude $A(\mathbf{ab'c'})/A(\mathbf{c'cb'})=\overline{\mathbf{ac'}}/\overline{\mathbf{c'c}}$.
Therefore, $\overline{\mathbf{ab'}}/\overline{\mathbf{b'b}}=\overline{\mathbf{ac'}}/\overline{\mathbf{c'c}}$.
Note that we have used modern notation above. Euclid never used the
word \textquotedblleft area\textquotedblright{} explicitly.

Here, in this Fundamental Theorem of Proportion, the notion of area is not needed to
state the theorem, but Euclid used the theory of area to prove it.
Since Tarski's $\mathscr{E}_{2}$ lacks the notion of area, this proof
cannot be carried over to $\mathscr{E}_{2}$.

If the segments $\mathbf{ab'}$ and $\mathbf{b'b}$ are commensurable, there
exists an \emph{elementary} proof of this theorem that does not rely
on area. However, if $\mathbf{ab'}$ and $\mathbf{b'b}$ are incommensurable,
a limiting process must be invoked; in effect, this limiting method
is equivalent to an argument based on area.

Descartes and later Hilbert employed a solution by redefining the multiplication of segments.
Consider a right triangle, as shown in Figure \ref{fig:Hilbert-segment}. They choose the segment
$\mathbf{01}$ as a unit. Suppose $\mathbf{a}$ and $\mathbf{b}$
are two points on these orthogonal lines. Draw the line through $\mathbf{a}$
and $\mathbf{1}$, and then draw the line $\mathbf{bc}$ parallel
to $\mathbf{1a}$, intersecting $\mathbf{0a}$ at $\mathbf{c}$. The
segment $\mathbf{0c}$ is then \emph{defined to be} the product of
$\mathbf{0a}$ and $\mathbf{0b}$.
\begin{center}
\includegraphics[scale=0.21]{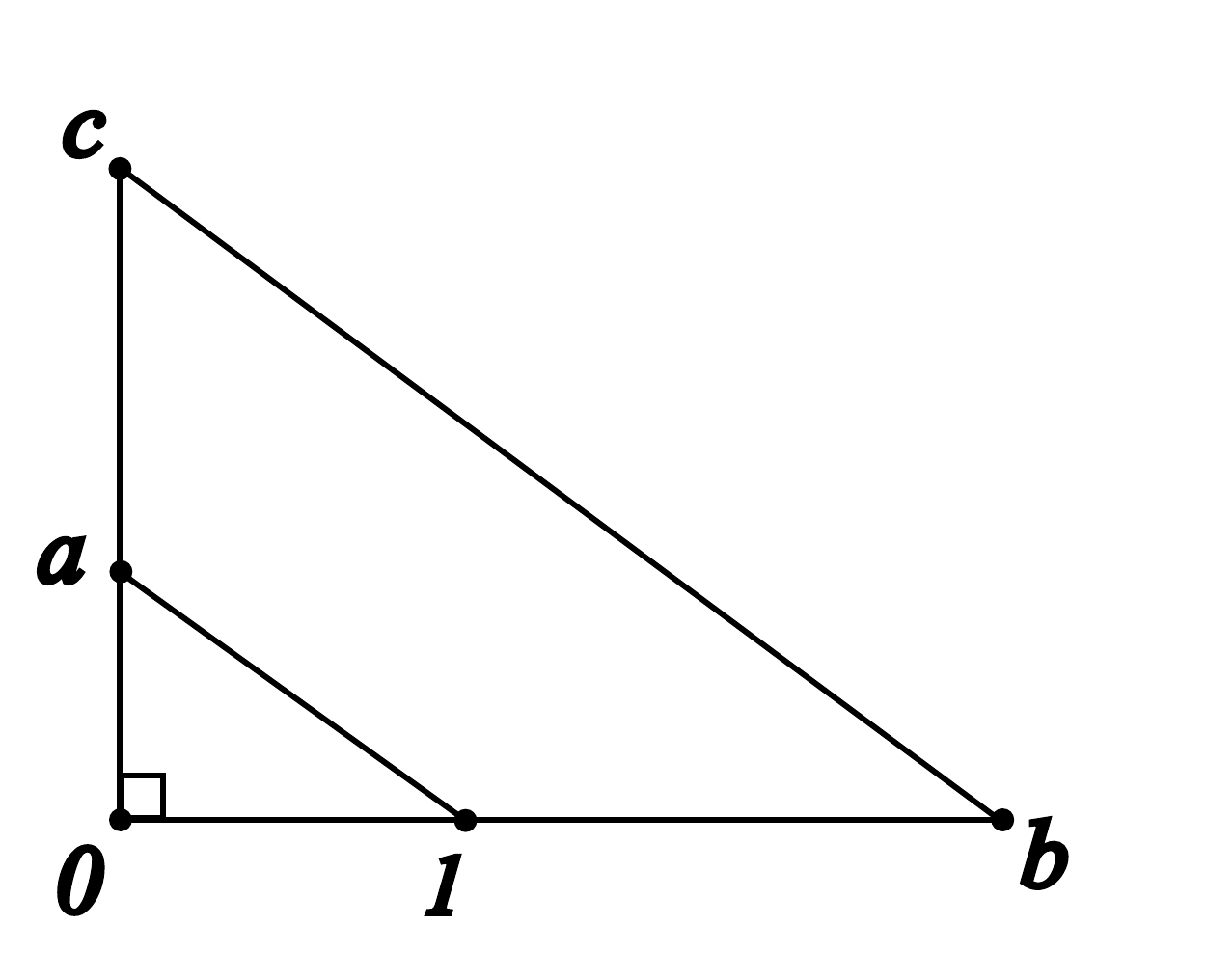}
\par\end{center}

\vspace{-15pt}
\begin{center}
\captionsetup{width=0.98\textwidth}
\captionof{figure}{Descartes and Hilbert: The definition of the product of two segments is a third segment.} \label{fig:Hilbert-segment}
\end{center}
\vspace{-5pt}

As Edwin Hewitt once remarked, ``Old theorems never die; they turn
into definitions.'' Indeed, here the theorem of Thales is turned into 
the definition of the multiplication of segments. It is important
to examine definitions carefully in mathematics. There are cases in
mathematical treatments where definitions are made convoluted in order
to make the theorems appear simple.

SST \cite{Schwabhauser} carried this further, recasting the 
segment arithmetic within $\mathscr{E}_{2}$. They defined addition
and multiplication directly on a coordinate line: the \textquotedblleft geometric
sum\textquotedblright{} and \textquotedblleft geometric product\textquotedblright{}
of two points yield another point on the same line, via projection
constructions (see these definitions in Section \ref{sec:Tarski's-Interpretable}).

The Pythagorean theorem in the form of $\overline{\mathbf{ab}}^{2}+\overline{\mathbf{ac}\vphantom{b}}^{2}=\overline{\mathbf{bc}}^{2}$
appears as Theorem 15.8 in SST \cite{Schwabhauser}, but this is a
highly abbreviated formula with the definitions of geometric product
and geometric sum. The complexity of this formula is hidden in the
abbreviations (or definitions). When these abbreviations are fully
expanded (see Appendix \ref{appendix:d}) using the definition of
the geometric product, the theorem becomes a convoluted statement about
constructed line segments. Each square is interpreted as a line segment.
To Euclid, a segment multiplied by a segment yields a rectangular
shape, while to Hilbert and SST, a segment multiplied by a segment
yields another segment. So, in SST \cite{Schwabhauser}, the meaning
of $\overline{\mathbf{ab}}^{2}+\overline{\mathbf{ac}\vphantom{b}}^{2}=\overline{\mathbf{bc}}^{2}$
is that two line segments add up to a third segment (see Appendix
\ref{appendix:d}).
\vspace{2pt}

In summary, we have examined two approaches to the Pythagorean theorem:
\vspace{4pt}

(i) Interpreting $(\mathbf{ab})^{2}$ as (the area of) the shape of
a square, as Euclid did. We have concluded that this is not a feasible
direction for us to proceed.
\vspace{4pt}

(ii) Interpreting $(\mathbf{ab})^{2}$ as (the equivalence class of)
a line segment, constructed using parallel projections, which is indirect
and cumbersome. 
\vspace{4pt}

Besides these two, a third possible approach is the following:
\textbf{\begin{enumerate}
\item[(3)] The Numerical Approach by Interpreting $(\mathbf{ab})^2$ as the product of a number $\overline{\mathbf{ab}}$ and itself
\end{enumerate}}

This is the direction we shall pursue in the present paper.
\vspace{2pt}

George Birkhoff \cite{Birkhoff} proposed a system with four postulates,
incorporating the notions of numerical measures of lengths and angles.
However, it is not a formal system: it freely uses sets, real numbers,
and $(1,1)$ correspondences between sets. For example, he postulates
the ``$(1,1)$ correspondence between the points on a line and the
real numbers $x$'' to introduce the distance measure, and the ``$(1,1)$
correspondence between half-lines and the real numbers $a$ (mod $2\pi$)''
to introduce the angle measure.
\vspace{2pt}

In 1961, the School Mathematics Study Group (SMSG) developed an axiom
system~\cite{SMSG} (see Appendix \ref{appendix:c}) intended for
American high school geometry courses. Like Birkhoff\textquoteright s
approach, it employs real numbers to measure distances and angles
but similarly lacks a formal language framework. While it designates
point, line, and plane as undefined terms, this appears to refer only
to primitive objects---the system does not explicitly declare undefined
predicates or functions. A review of the SMSG axioms reveals that
the system implicitly treats the distance function, angle function,
area function, and volume function as undefined. Terms such as lie
on, lie in, real number, set, contain, as well as correspondence,
intersection, and union of sets, are used in the axioms but are not
defined. The SMSG axiom list was made large and redundant, and the
lack of independence of the axioms was intentional for pedagogical
reasons.
\vspace{2pt}

The goal of the present paper is to incorporate numerical measures of distances
and angles as function symbols into a first-order theory of Euclidean
geometry, while avoiding set theory and retaining simplicity. Function
symbols are admissible as primitives in first-order logic, since they
are special cases of predicates.
\vspace{2pt}

We will show that the Axiom of Similarity plays a central role, effectively
killing two birds with one stone: it provides the foundation for the
theory of proportion and similarity, and it renders Euclid\textquoteright s
Parallel Postulate (EPP) as a theorem. The Axiom of Similarity can
be viewed as a quantitative formulation of EPP. The Pythagorean
theorem and other quantitative results from similarity theory can
then be expressed directly in the first-order languages of $\mathscr{E}_{d}$
and $\mathscr{E}_{da}$.

\section{Theory $\mathscr{E}_{d}$---Quantitative Euclidean Geometry with
a Single Geometric Primitive Notion---Distance Function \boldmath{$d$}\label{sec:Theory-Ed}}
\medskip{}

\noindent\textbf{Undefined Primitive Notions}
\medskip{}

The system $\mathscr{E}_{d}$ is formulated in a first-order formal
language
\[
L(\mathscr{E}_{d})=L(d;\,=,+,\cdot,<,0,1).
\]

$L(\mathscr{E}_{d})$ is a 2-sorted language: one sort for points,
and the other for numbers. We shall use standard lowercase letters
such as $x,y$ for numbers, and boldface lowercase letters such as
$\mathbf{a},\mathbf{b}$ for points.

The only geometric primitive notion is a 2-place function symbol $d$,
with $d(\mathbf{ab})$ denoting the distance between two points
$\mathbf{a}$ and $\mathbf{b}$, which is a number.

The symbol $=$ is overloaded for convenience: it denotes either the
equality of two numbers or the identity of two points, depending on
the context.

The symbols $+$ and $\cdot$ are 2-place function symbols for numbers,
denoting addition and multiplication. The symbol $<$ is a 2-place
predicate symbol for numbers, with $x<y$ denoting that $x$ is less
than $y$.

The symbols $0$ and $1$ are constant symbols for numbers.
\vspace{4pt}

\noindent\rule[0.5ex]{1\columnwidth}{1pt}

\noindent\textbf{\emph{$\mathscr{E}_{d}$---The Theory of Plane
Quantitative Euclidean Geometry with Distance Function }}

\noindent\textbf{\emph{in the Language $L(d;\,=,+,\cdot,<,0,1)$}}

\noindent\rule[0.5ex]{1\columnwidth}{1pt}

\medskip{}

We list the axioms of $\mathscr{E}_{d}$ in the following.\medskip{}

\noindent\textbf{Axioms RCF1 through RCF17} (for real closed fields, 
see Appendix \ref{appendix:a})\smallskip{}

RCF1 through RCF17 are the axioms for real closed fields, governing
the predicate and function symbols $=,+,\cdot,<$ for numbers. This
is the first-order formulation of real numbers. As real closed fields
are well studied, this is not the focus of the present paper.\medskip{}

Axioms D1 through D7 in the following are geometric axioms, in which
all variables range over points, except for $x$ in D4, which ranges
over numbers.\medskip{}

\noindent\textbf{Axiom D1}. Axiom of Nonnegativeness 
\[
\forall\mathbf{a}\forall\mathbf{b}\,\,\,\,0\leqslant d(\mathbf{ab}).
\]

\noindent\textbf{Axiom D2}. Axiom of Identity
\[
\forall\mathbf{a}\forall\mathbf{b}\,\,\,\,d(\mathbf{ab})=0\leftrightarrow\mathbf{a}=\mathbf{b}.
\]

\noindent\textbf{Axiom D3}. Axiom of Symmetry
\[
\forall\mathbf{a}\forall\mathbf{b}\,\,\,\,d(\mathbf{ab})=d(\mathbf{ba}).
\]

Axioms D1, D2, and D3 are the first three axioms of a metric space.
A metric space also includes one more axiom---the triangle inequality.
We do not need it as an axiom, as we will prove it in Theorem \ref{thm:(Triangle-inequality)}.
\medskip{}

\noindent\textbf{Axiom D4}. Axiom of segment extension
\[
\forall\mathbf{a}\forall\mathbf{b}\forall x\,\,\,\,0\leqslant x\rightarrow[\exists\mathbf{c}\,\,\,d(\mathbf{ac})=d(\mathbf{ab})+d(\mathbf{bc})\wedge d(\mathbf{bc})=x],
\]
where $\mathbf{a},\mathbf{b},\mathbf{c}$ are points and $x$ is a
number.\medskip{}

\noindent\textbf{Axiom D5}. Axiom of Similarity
\[
\begin{aligned} & \forall\mathbf{a}\forall\mathbf{b}\forall\mathbf{c}\forall\mathbf{b'}\forall\mathbf{c'}\\
 & [d(\mathbf{ab})=d(\mathbf{ab'})+d(\mathbf{b'b})\wedge d(\mathbf{ac})=d(\mathbf{ac'})+d(\mathbf{c'c})\wedge d(\mathbf{ab'})\cdot d(\mathbf{ac})=d(\mathbf{ac'})\cdot d(\mathbf{ab})]\\
 & \rightarrow d(\mathbf{ab'})\cdot d(\mathbf{bc})=d(\mathbf{b'c'})\cdot d(\mathbf{ab}).
\end{aligned}
\]

\begin{center}
\includegraphics[scale=0.2]{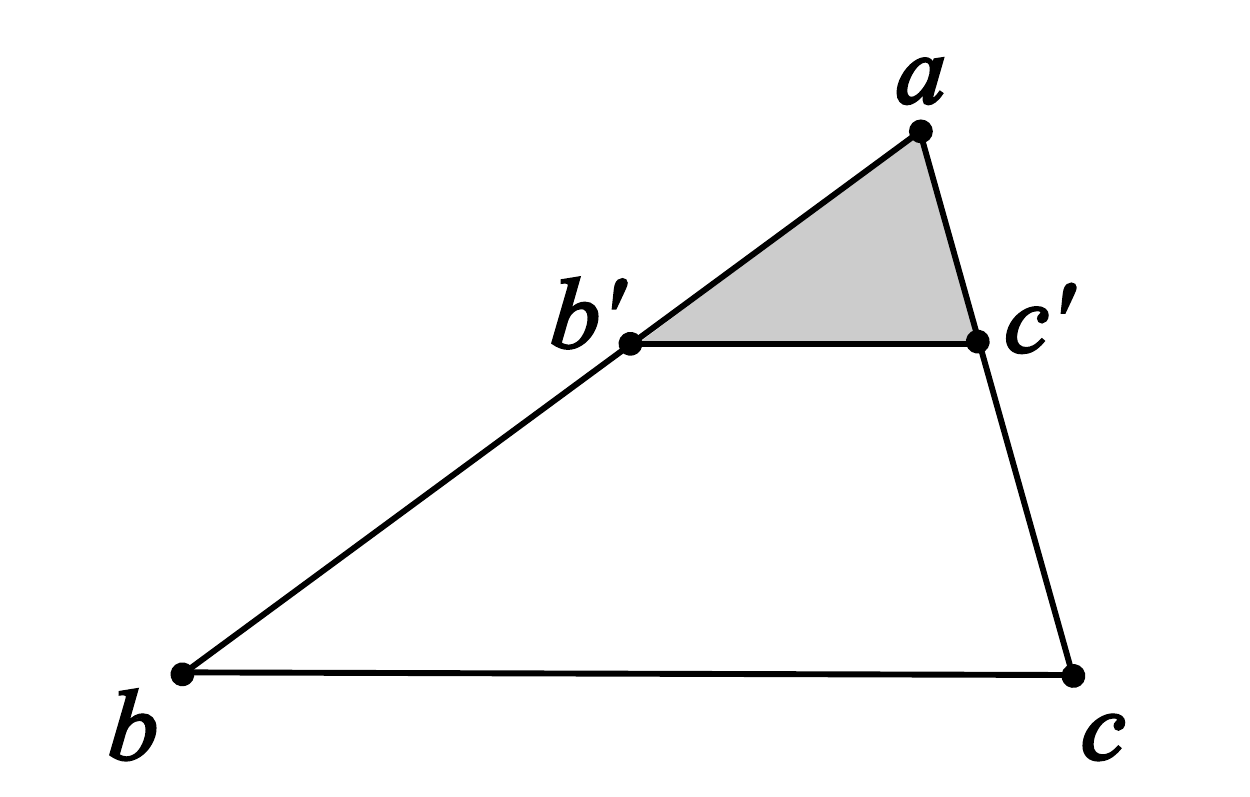}
\par\end{center}

\vspace{-15pt}
\begin{center}
\captionsetup{width=0.29\textwidth}
\captionof{figure}{Axiom of similarity.} \label{fig:Axiom-similarity}
\end{center}
\vspace{-5pt}

Informal explanation in plain English:

In $\triangle\mathbf{abc}$, suppose $\mathbf{b'c'}$ cuts $\mathbf{ab}$
and $\mathbf{ac}$ at $\mathbf{b'}$ and $\mathbf{c'}$ respectively.
If there is a number $k$ such that $d(\mathbf{ab})=k\cdot d(\mathbf{ab'})$
and $d(\mathbf{ac})=k\cdot d(\mathbf{ac'})$, then
\[
d(\mathbf{bc})=k\cdot d(\mathbf{b'c'}).
\]
Axiom D5 is expressed in the form of multiplications rather than ratios,
because we do not need to use an extra variable $k$. In addition,
an equation of products is more general than an equation of ratios,
as we do not need to worry about division by zero. \medskip{}

\noindent\textbf{Axiom D6}. Axiom of Dimension Lower Bound (at least
2)
\[
\exists\mathbf{p}_{0}\exists\mathbf{p}_{1}\exists\mathbf{p}_{2}\,\,\,\,d(\mathbf{p}_{0}\mathbf{p}_{1})\neq0\wedge d(\mathbf{p}_{0}\mathbf{p}_{1})=d(\mathbf{p}_{0}\mathbf{p}_{2})\wedge d(\mathbf{p}_{0}\mathbf{p}_{1})=d(\mathbf{p}_{1}\mathbf{p}_{2}).
\]

\begin{center}
\includegraphics[scale=0.22]{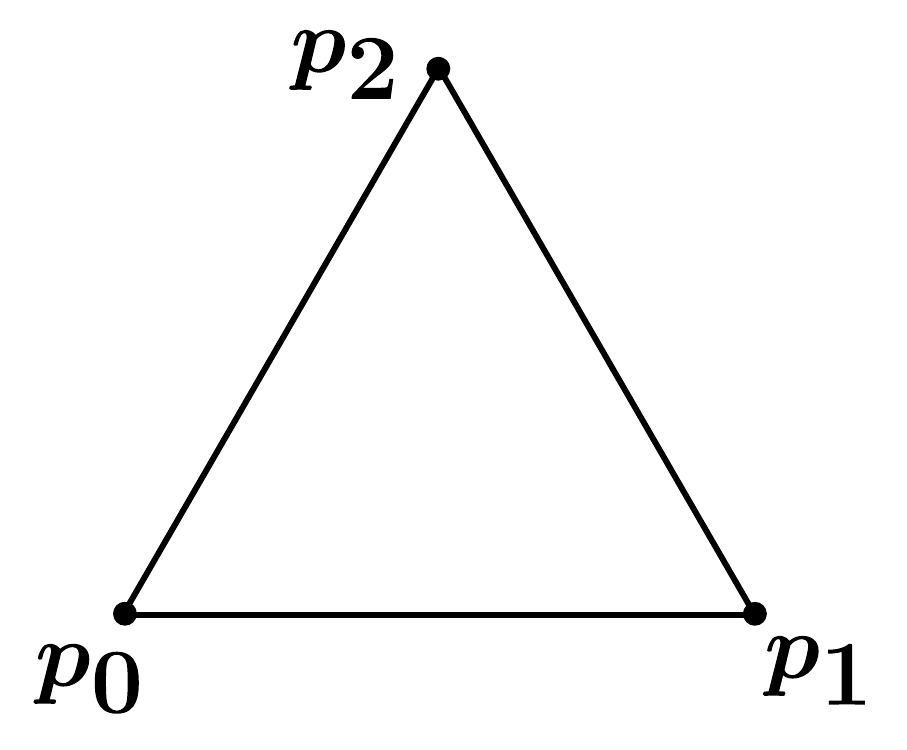}
\par\end{center}

\vspace{-15pt}
\begin{center}
\captionsetup{width=0.38\textwidth}
\captionof{figure}{The dimension is at least 2.} \label{fig:Lower-dim}
\end{center}
\vspace{-5pt}

\noindent\textbf{Axiom D7}. Axiom of Dimension Upper Bound (at most
2)
\begin{align*}
 & \forall\mathbf{p}_{0}\forall\mathbf{p}_{1}\forall\mathbf{p}_{2}\forall\mathbf{p}_{3}\\
 & [d(\mathbf{p}_{0}\mathbf{p}_{1})=d(\mathbf{p}_{0}\mathbf{p}_{2})\wedge d(\mathbf{p}_{0}\mathbf{p}_{1})=d(\mathbf{p}_{0}\mathbf{p}_{3})\\
 & \wedge d(\mathbf{p}_{0}\mathbf{p}_{1})=d(\mathbf{p}_{1}\mathbf{p}_{2})\wedge d(\mathbf{p}_{0}\mathbf{p}_{1})=d(\mathbf{p}_{1}\mathbf{p}_{3})\wedge d(\mathbf{p}_{0}\mathbf{p}_{1})=d(\mathbf{p}_{2}\mathbf{p}_{3})]\\
 & \rightarrow d(\mathbf{p}_{0}\mathbf{p}_{1})=0.
\end{align*}

\begin{center}
\includegraphics[scale=0.22]{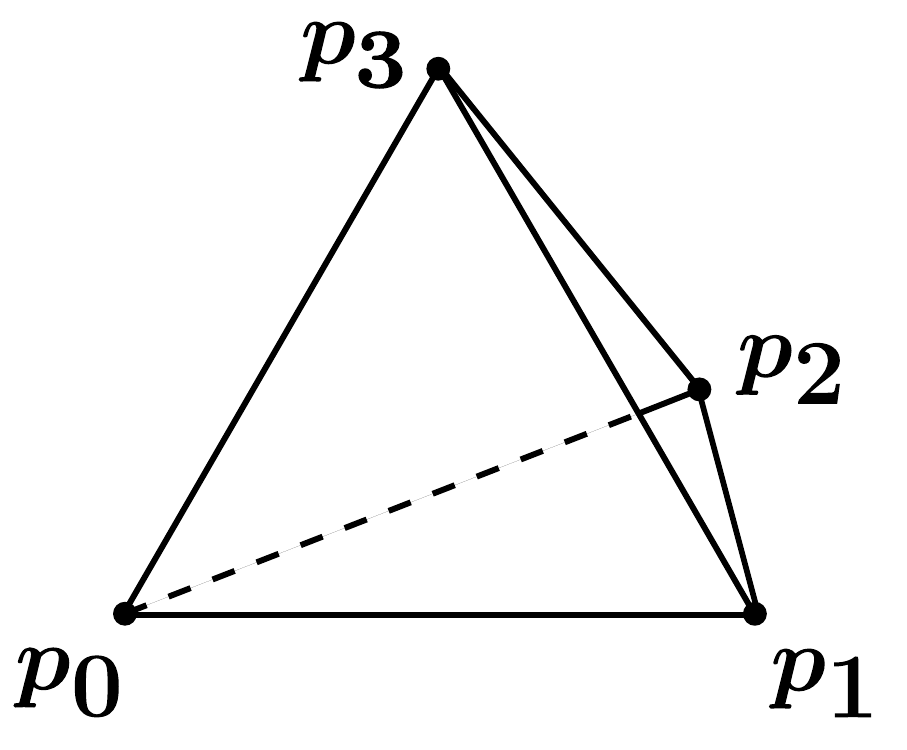}
\par\end{center}

\vspace{-15pt}
\begin{center}
\captionsetup{width=0.38\textwidth}
\captionof{figure}{The dimension is at most 2.} \label{fig:Higher-dim}
\end{center}
\vspace{-5pt}

 This concludes the list of axioms for $\mathscr{E}_{d}$. Note that
Tarski's $\mathscr{E}_{2}$ includes T11---axiom schema of continuity
(see Appendix \ref{appendix:b}). We do not require it here, since
RCF17 together with D4 implies T11.

\vspace{8pt}

\noindent A remark on the dimension axioms: 

Axiom D6 can be replaced by a weaker form:\medskip{}

\noindent\textbf{Axiom D6\textquotesingle}. Axiom of Dimension Lower
Bound (at least 2)
\vspace{2pt}
\begin{align*}
 & \exists\mathbf{p}_{0}\exists\mathbf{p}_{1}\exists\mathbf{p}_{2}\\
 & \,\,d(\mathbf{p}_{0}\mathbf{p}_{1})<d(\mathbf{p}_{1}\mathbf{p}_{2})+d(\mathbf{p}_{2}\mathbf{p}_{0})\\
 & \wedge d(\mathbf{p}_{1}\mathbf{p}_{2})<d(\mathbf{p}_{2}\mathbf{p}_{0})+d(\mathbf{p}_{0}\mathbf{p}_{1})\\
 & \wedge d(\mathbf{p}_{2}\mathbf{p}_{0})<d(\mathbf{p}_{0}\mathbf{p}_{1})+d(\mathbf{p}_{1}\mathbf{p}_{2}).
\end{align*}

This asserts that there exist three non-collinear points. When considered
in isolation, D6 is strictly stronger than D6\textquotesingle, because
D6 implies D6\textquotesingle ~but not vice versa. However, in the
presence of the rest of the axioms, they are equivalent, since D6\textquotesingle
~also implies D6. In general, using an axiom that is too weak may leave
certain true statements unprovable, while using one that is too strong
may make it no longer independent of other axioms. D6 is independent
in the system, because there exists a model of 1-dimensional geometry
where D6 fails while all other axioms remain true. When standing alone,
the negation of D6 does not imply that the dimension is one. However,
the negation of D6 together with the rest of the axioms does imply
that the dimension is one. We prefer D6 because it is simpler to
state (using primitives only without definitions or abbreviations)
and it generalizes more easily to higher dimensions (Section \ref{sec:Geometry-of-Higher}).
Basically, it is aligned with the affine-independence approach to
dimension but stated in metric terms: there exists a nondegenerate
2-dimensional (regular) simplex, but there does not exist a nondegenerate
3-dimensional (regular) simplex. This can be easily generalized to
$n$-dimensional geometry: there exists a nondegenerate $n$-dimensional
(regular) simplex, but there does not exist a nondegenerate ($n+1$)-dimensional
(regular) simplex.
\vspace{4pt}

\begin{MyDefinition}\label{def:Between} \emph{(Between)}

\emph{We shall use $B(\mathbf{abc})$ as the abbreviation for the
following:
\vspace{4pt}
\begin{align*}
 & B(\mathbf{abc})\\
:=\,\,\,\,\,\, & d(\mathbf{ac})=d(\mathbf{ab})+d(\mathbf{bc})
\end{align*}
}

\end{MyDefinition}

Note that the betweenness defined here is non-strict, meaning $B(\mathbf{acb})$
holds if $\mathbf{b}=\mathbf{a}$ or $\mathbf{b}=\mathbf{c}$. To
compare, Tarski used non-strict betweenness, while Hilbert used strict
betweenness.
\vspace{4pt}

\begin{MyDefinition} \emph{(Collinear)\label{def:Collinear}}

\emph{We shall use $L(\mathbf{abc})$ as the abbreviation for the
following:
\vspace{4pt}
\begin{align*}
 & L(\mathbf{abc})\\
:=\,\,\,\,\,\, & B(\mathbf{abc})\vee B(\mathbf{bca})\vee B(\mathbf{cab})
\end{align*}
}

\emph{Informally, $L(\mathbf{abc})$ means that the points $\mathbf{a},\mathbf{b},\mathbf{c}$
are collinear.}

\end{MyDefinition}
\pagebreak{}

\begin{MyDefinition} \emph{(Triangle congruence by SSS)} \label{def:TriAngle-congruence} 

\emph{We shall use $\triangle\mathbf{abc}\equiv\triangle\mathbf{a'b'c'}$
as the abbreviation for the following: 
\begin{align*}
 & \triangle\mathbf{abc}\equiv\triangle\mathbf{a'b'c'}\\
:=\,\,\,\,\,\, & d(\mathbf{ab})=d(\mathbf{a'b'})\wedge d(\mathbf{bc})=d(\mathbf{b'c'})\wedge d(\mathbf{ca})=d(\mathbf{c'a'})
\end{align*}
Informally, $\triangle\mathbf{abc}$ is said to be congruent to $\triangle\mathbf{a'b'c'}$
if all three pairs of corresponding sides are congruent.}

\end{MyDefinition}

\begin{MyDefinition} \emph{(Triangle similarity by SSS)} \label{def:Similar-triangles} 

\emph{We shall use $\triangle\mathbf{abc}\sim\triangle\mathbf{a'b'c'}$
as the abbreviation for the following: 
\begin{align*}
 & \triangle\mathbf{abc}\sim\triangle\mathbf{a'b'c'}\\
:=\,\,\,\,\,\, & \exists k\,\,k\neq0\wedge d(\mathbf{ab})=k\cdot d(\mathbf{a'b'})\wedge d(\mathbf{bc})=k\cdot d(\mathbf{b'c'})\wedge d(\mathbf{ca})=k\cdot d(\mathbf{c'a'})
\end{align*}
}

\emph{Informally, $\triangle\mathbf{abc}$ is said to be similar to
$\triangle\mathbf{a'b'c'}$ if their three corresponding sides are
in proportion. Clearly, triangle congruence is a special case of triangle
similarity where $k=1$.}

\end{MyDefinition}

Traditionally, two triangles are defined to be congruent if all their
corresponding sides are congruent and all their corresponding angles
are congruent. Two triangles are defined to be similar, if all their
corresponding sides are in proportion and all their corresponding
angles are congruent. However, it is easy to realize that the conditions
on the angles are redundant in these definitions, because the side
conditions alone imply that the corresponding angles are congruent---a
fact that will be proved as theorems. Therefore, Definitions \ref{def:TriAngle-congruence}
and \ref{def:Similar-triangles} differ from the traditional definitions
of triangle congruence and triangle similarity found in most textbooks.

\begin{MyDefinition} \label{def:Angle-congruence} \emph{(Angle congruence
by docking congruent triangles) }

\emph{We shall use $\angle\mathbf{pvq}\equiv\angle\mathbf{p'v'q'}$
as the abbreviation for the following: 
\begin{align*}
 & \angle\mathbf{pvq}\equiv\angle\mathbf{p'v'q'}\\
:=\,\,\,\,\,\, & \exists\mathbf{p''}\exists\mathbf{q''}\,\,\triangle\mathbf{p''v'q''}\equiv\triangle\mathbf{pvq}\wedge\left[B(\mathbf{v'p''p'})\vee B(\mathbf{v'p'p''})\right]\wedge\left[B(\mathbf{v'q''q'})\vee B(\mathbf{v'q'q''})\right]
\end{align*}
}

\end{MyDefinition}
\begin{center}
\includegraphics[scale=0.22]{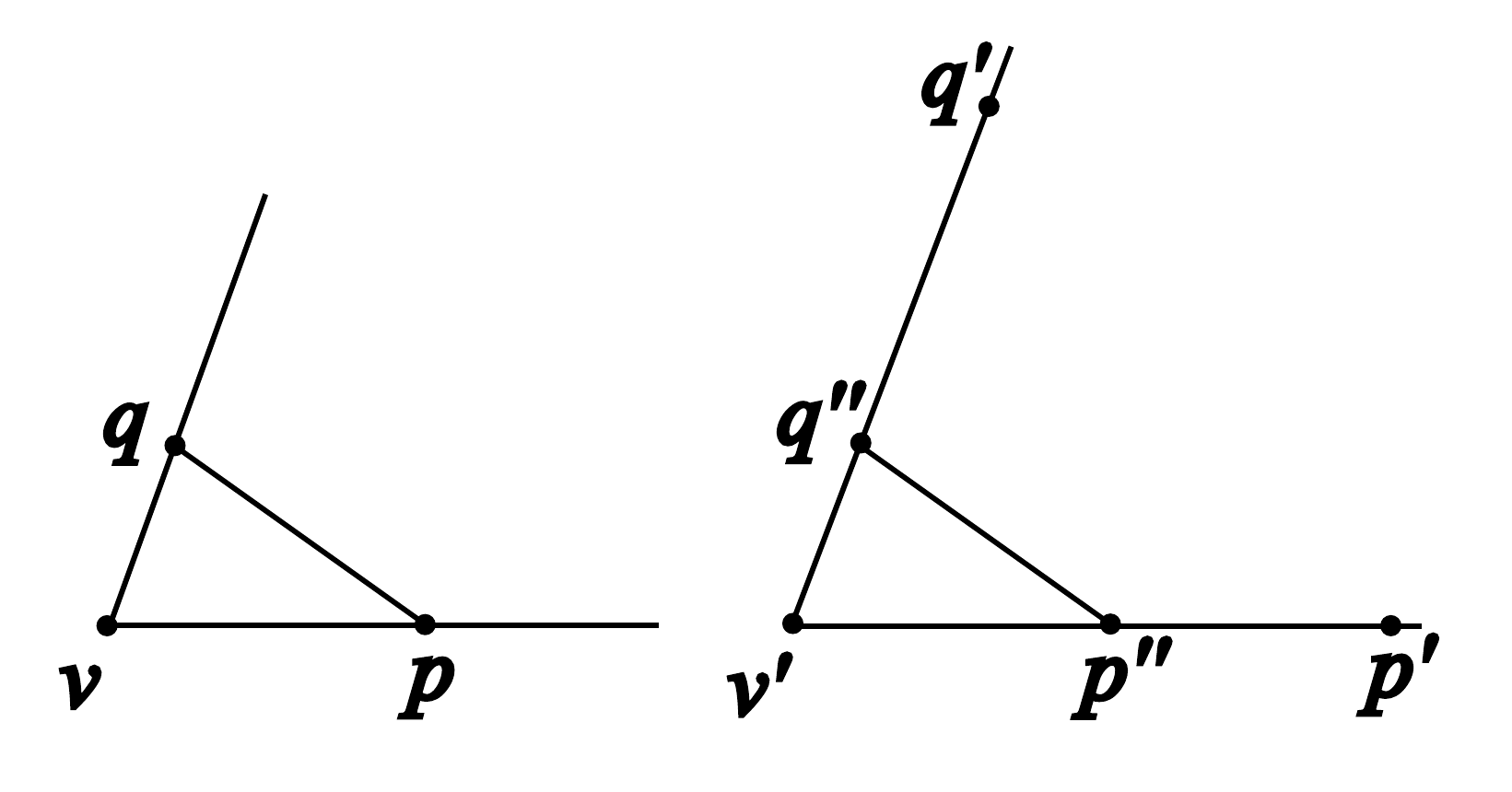}
\par\end{center}

\vspace{-15pt}
\begin{center}
\captionsetup{width=0.35\textwidth}
\captionof{figure}{Congruence of angles.} \label{fig:Congruence-angles}
\end{center}
\vspace{-5pt}

Informally, angle $\angle\mathbf{pvq}$ is congruent to angle $\angle\mathbf{p'v'q'}$,
if triangle $\triangle\mathbf{pvq}$ can be ``docked'' into triangle
$\triangle\mathbf{p'v'q'}$ at the vertex $\mathbf{v'}$. By ``docked'',
we mean there exists a triangle $\triangle\mathbf{p''v'q''}$ such
that $\triangle\mathbf{p''v'q''}\equiv\triangle\mathbf{pvq}$.

\begin{MyDefinition}\label{def:Sum-angles} \emph{(Addition of angles)}

\noindent\emph{Case 1: Addition of non-supplementary angles}

\emph{If $\neg B(\mathbf{pvq})$ and $\mathbf{p}\neq\mathbf{v}\wedge\mathbf{q}\neq\mathbf{v}$,
we shall use $\angle\mathbf{pvq}\equiv\angle\mathbf{pvt}\tilde{+}\angle\mathbf{tvq}$
as the abbreviation for the following (this is the case when $\angle\mathbf{pvt}$
and $\angle\mathbf{tvq}$ are not supplementary):
\begin{align*}
 & \angle\mathbf{pvq}\equiv\angle\mathbf{pvt}\tilde{+}\angle\mathbf{tvq}\\
:=\,\,\,\,\,\, & \neg B(\mathbf{pvq})\wedge\left[\exists\mathbf{a}\exists\mathbf{b}\,\,B(\mathbf{atb})\wedge[B(\mathbf{vap}\vee B(\mathbf{vpa})]\wedge[B(\mathbf{vbq}\vee B(\mathbf{vqb})]\right]
\end{align*}
}
\begin{center}
\includegraphics[scale=0.22]{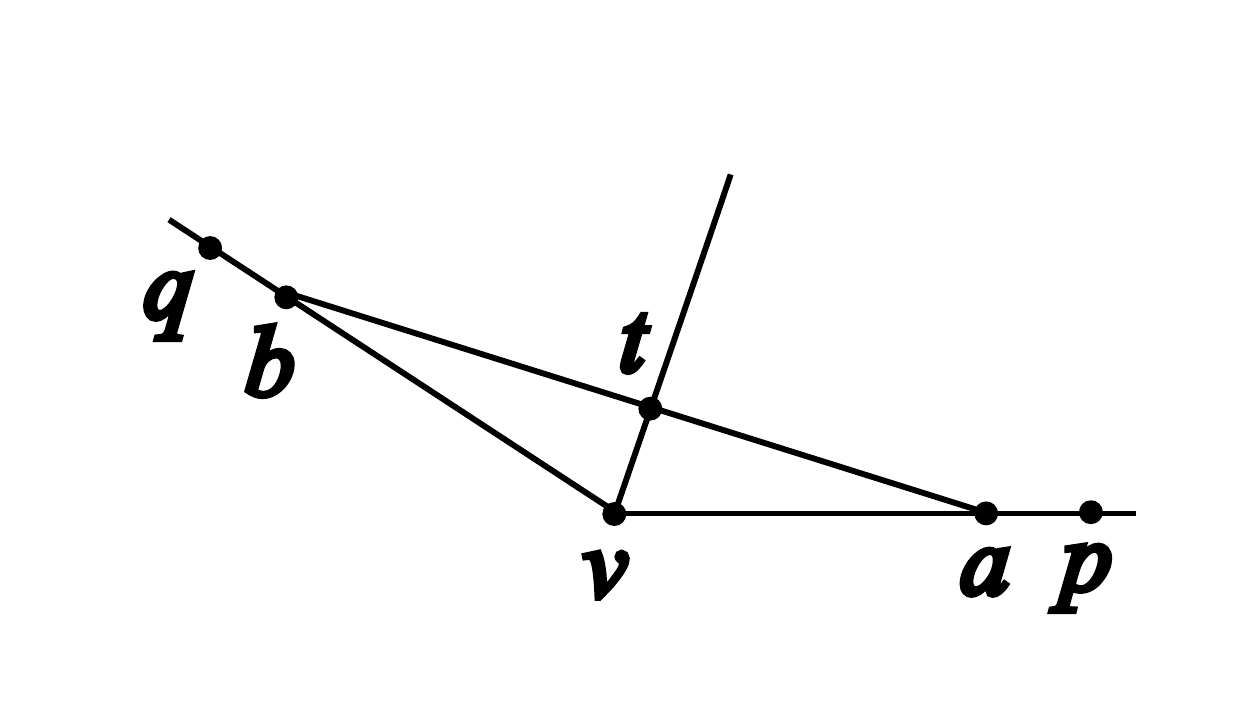}
\par\end{center}

\vspace{-15pt}
\begin{center}
\captionsetup{width=0.55\textwidth}
\captionof{figure}{Addition of angles---non-supplementary case.} \label{fig:Addition-angles-nonsupplimentary}
\end{center}
\vspace{-5pt}

\noindent\emph{Case 2: Addition of supplementary angles}

\emph{If $B(\mathbf{pvq})$},
\begin{align*}
 & \angle\mathbf{pvq}\equiv\angle\mathbf{pvt}\tilde{+}\angle\mathbf{tvq}\\
:=\,\,\,\,\,\, & B(\mathbf{pvq})\wedge\mathbf{p}\neq\mathbf{v}\wedge\mathbf{q}\neq\mathbf{v}\wedge\mathbf{t}\neq\mathbf{v}
\end{align*}

\begin{center}
\includegraphics[scale=0.22]{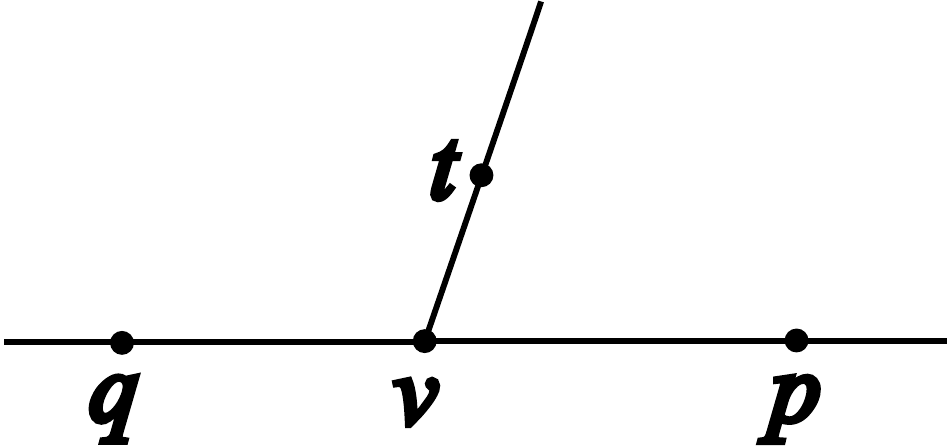}
\par\end{center}

\vspace{-15pt}
\begin{center}
\captionsetup{width=0.5\textwidth}
\captionof{figure}{Addition of angles---supplementary case.} \label{fig:Addition-angles-supplementary}
\end{center}
\vspace{-5pt}

\emph{Putting the two cases together, we define:
\begin{align*}
 & \angle\mathbf{pvq}\equiv\angle\mathbf{pvt}\tilde{+}\angle\mathbf{tvq}\\
:=\,\,\,\,\,\, & \left\{ \neg B(\mathbf{pvq})\wedge\left[\exists\mathbf{a}\exists\mathbf{b}\,\,B(\mathbf{atb})\wedge[B(\mathbf{vap}\vee B(\mathbf{vpa})]\wedge[B(\mathbf{vbq}\vee B(\mathbf{vqb})]\right]\right\} \\
 & \vee\left\{ B(\mathbf{pvq})\wedge\mathbf{p}\neq\mathbf{v}\wedge\mathbf{q}\neq\mathbf{v}\wedge\mathbf{t}\neq\mathbf{v}\right\} 
\end{align*}
}

\emph{When the angles do not have a common vertex, we may also use
$\angle\mathbf{u}_{1}\mathbf{a}\mathbf{u}_{2}\equiv\angle\mathbf{p_{1}bp_{2}}\tilde{+}\angle\mathbf{q}_{1}\mathbf{c}\mathbf{q}_{2}$.
By that, we mean there exist three other angles such that
\begin{align*}
 & \angle\mathbf{p_{1}bp_{2}}\equiv\angle\mathbf{pvt}\\
 & \wedge\angle\mathbf{q}_{1}\mathbf{c}\mathbf{q}_{2}\equiv\angle\mathbf{tvq}\\
 & \wedge\angle\mathbf{u}_{1}\mathbf{a}\mathbf{u}_{2}\equiv\angle\mathbf{pvq}\\
 & \wedge\angle\mathbf{pvq}\equiv\angle\mathbf{pvt}\tilde{+}\angle\mathbf{tvq}.
\end{align*}
}

\end{MyDefinition}

We can also define the comparison of two angles, or the order of angles.

\begin{MyDefinition}\label{def:Angles-comparison} \emph{(Order of
angles)}

\emph{We shall use the abbreviation $\angle\mathbf{pvq}\tilde{\leqslant}\mathbf{pvt}$
for the following:
\begin{align*}
 & \angle\mathbf{p_{1}bp_{2}}\tilde{\leqslant}\angle\mathbf{u}_{1}\mathbf{a}\mathbf{u}_{2}\\
:=\,\,\,\,\,\, & \exists\mathbf{q}_{1}\exists\mathbf{c}\exists\mathbf{q}_{2}\,\,\angle\mathbf{u}_{1}\mathbf{a}\mathbf{u}_{2}=\angle\mathbf{p_{1}bp_{2}}\tilde{+}\angle\mathbf{q}_{1}\mathbf{c}\mathbf{q}_{2}
\end{align*}
}

\emph{We also write $\angle\mathbf{p_{1}bp_{2}}\tilde{<}\angle\mathbf{u}_{1}\mathbf{a}\mathbf{u}_{2}$
to mean $\angle\mathbf{p_{1}bp_{2}}\tilde{\leqslant}\angle\mathbf{u}_{1}\mathbf{a}\mathbf{u}_{2}$
and $\angle\mathbf{p_{1}bp_{2}}\not\equiv\angle\mathbf{u}_{1}\mathbf{a}\mathbf{u}_{2}.$}

\end{MyDefinition}

Note that we have defined the sum of two angles and the meaning of
one angle is greater than another, but we have not assigned numerical
values to angles, because that is not in the language of $\mathscr{E}_{d}$.\medskip{}

\begin{MyDefinition} \emph{\label{def:Perpendicular}(Right angle) }

\emph{We shall use $R(\mathbf{abc})$ as the abbreviation for the
following:
\begin{align*}
 & R(\mathbf{abc})\\
:=\,\,\,\,\,\, & \mathbf{a}\neq\mathbf{b}\wedge\mathbf{a}\ne\mathbf{c}\wedge\mathbf{b}\ne\mathbf{c}\wedge[\exists\mathbf{a'}\,\,B(\mathbf{aba'})\wedge d(\mathbf{ba})=d(\mathbf{ba'})\wedge d(\mathbf{ca})=d(\mathbf{ca'})]
\end{align*}
}

\end{MyDefinition}
\begin{center}
\includegraphics[scale=0.22]{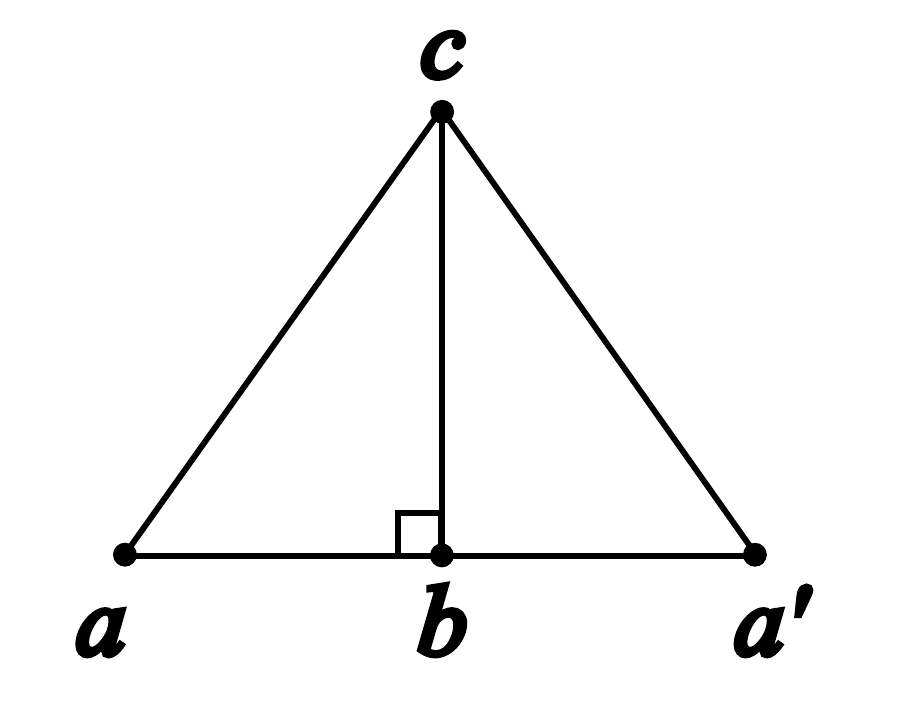}
\par\end{center}

\vspace{-15pt}
\begin{center}
\captionsetup{width=0.23\textwidth}
\captionof{figure}{Right angle.} \label{fig:Right-angle}
\end{center}
\vspace{-5pt}
\begin{thm}
\emph{\label{thm:(SSS-AAA)}(Triangle congruence SSS$\rightarrow$AAA)}

If two triangles are congruent, then their corresponding angles are
congruent. Formally,
\begin{align*}
 & \forall\mathbf{a}\forall\mathbf{b}\forall\mathbf{c}\forall\mathbf{a'}\forall\mathbf{b'}\forall\mathbf{c'}\\
 & \triangle\mathbf{abc}\equiv\triangle\mathbf{a'b'c'}\rightarrow\angle\mathbf{abc}\equiv\angle\mathbf{a'b'c'}\wedge\angle\mathbf{bca}\equiv\angle\mathbf{b'c'a'}\wedge\angle\mathbf{cab}\equiv\angle\mathbf{c'a'b'}.
\end{align*}
\end{thm}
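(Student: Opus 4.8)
The plan is to prove the three angle-congruence conjuncts by the same argument, so it suffices to explain how to obtain $\angle\mathbf{abc}\equiv\angle\mathbf{a'b'c'}$. The first step is to unfold Definition~\ref{def:Angle-congruence}: $\angle\mathbf{abc}\equiv\angle\mathbf{a'b'c'}$ holds exactly when there are points $\mathbf{a''},\mathbf{c''}$ with $\triangle\mathbf{a''b'c''}\equiv\triangle\mathbf{abc}$, together with $[B(\mathbf{b'a''a'})\vee B(\mathbf{b'a'a''})]$ and $[B(\mathbf{b'c''c'})\vee B(\mathbf{b'c'c''})]$. Thus the whole task reduces to exhibiting suitable witnesses $\mathbf{a''},\mathbf{c''}$.

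The key observation is that the hypothesis already supplies such witnesses: take $\mathbf{a''}:=\mathbf{a'}$ and $\mathbf{c''}:=\mathbf{c'}$. Then $\triangle\mathbf{a'b'c'}\equiv\triangle\mathbf{abc}$ is just $\triangle\mathbf{abc}\equiv\triangle\mathbf{a'b'c'}$ read through the symmetry of $=$ on numbers—so triangle congruence (Definition~\ref{def:TriAngle-congruence}) is symmetric—which is exactly our assumption. For the betweenness conditions, with $\mathbf{a''}=\mathbf{a'}$ the left disjunct becomes $B(\mathbf{b'a'a'})$, which by Definition~\ref{def:Between} unfolds to $d(\mathbf{b'a'})=d(\mathbf{b'a'})+d(\mathbf{a'a'})$; since $d(\mathbf{a'a'})=0$ by Axiom~D2 and $x+0=x$ by the RCF axioms, this equation holds. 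The condition $B(\mathbf{b'c'c'})$ holds for the same reason. Hence $\angle\mathbf{abc}\equiv\angle\mathbf{a'b'c'}$.

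Running the identical argument with the vertices permuted—docking at $\mathbf{c'}$ with witnesses $\mathbf{b'},\mathbf{a'}$, and at $\mathbf{a'}$ with witnesses $\mathbf{c'},\mathbf{b'}$—gives $\angle\mathbf{bca}\equiv\angle\mathbf{b'c'a'}$ and $\angle\mathbf{cab}\equiv\angle\mathbf{c'a'b'}$, which completes the proof.

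I do not expect a genuine obstacle here: the statement is essentially built into the design of Definition~\ref{def:Angle-congruence}, whose ``docking'' formulation was chosen precisely so that a triangle is, trivially, docked into any congruent copy of itself. The only care needed is bookkeeping—unfolding the nested abbreviations $\equiv$, $B$, and $\triangle\cdots\equiv\triangle\cdots$ correctly—and noting that the degenerate betweenness $B(\mathbf{b'a'a'})$ is legitimate exactly because the betweenness of Definition~\ref{def:Between} is non-strict, reducing to the instance $d(\mathbf{a'a'})=0$ that Axiom~D2 guarantees.
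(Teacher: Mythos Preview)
Your proof is correct and follows essentially the same approach as the paper: the paper's own proof simply remarks that the definition of triangle congruence ``fits'' the docking definition of angle congruence, and you have carried out exactly that verification in detail by choosing the witnesses $\mathbf{a''}=\mathbf{a'}$, $\mathbf{c''}=\mathbf{c'}$ and checking the degenerate betweenness via $d(\mathbf{a'a'})=0$. There is nothing to add.
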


\begin{proof}
The proof is trivial. The definition of the congruence of two triangles
\mbox{(Definition~\ref{def:TriAngle-congruence})} means the triangles have
three sides being congruent respectively. This simply fits the definition
of the congruence of each of the three pairs of corresponding angles
\mbox{(Definition~\ref{def:Angle-congruence}).}
\end{proof}
The reason we include this theorem and the following seemingly simple
theorems is that our Definition \ref{def:TriAngle-congruence} of
triangle congruence is different from most textbooks, and therefore
we need to demonstrate how this definition works.
\begin{thm}
\emph{\label{thm:(SAS)}(Triangle congruence criterion SAS) }

If two triangles have the two sides congruent to two sides respectively,
and the angle contained by the two sides congruent, then the two triangles
are congruent. Formally,\emph{
\begin{align*}
 & \forall\mathbf{a}\forall\mathbf{b}\forall\mathbf{c}\forall\mathbf{a'}\forall\mathbf{b'}\forall\mathbf{c'}\\
 & [d(\mathbf{ab})=d(\mathbf{a'b'})\wedge d(\mathbf{ac})=d(\mathbf{a'c'})\wedge\angle\mathbf{bac}\equiv\angle\mathbf{b'a'c'}]\rightarrow\triangle\mathbf{bac}\equiv\triangle\mathbf{b'a'c'}.
\end{align*}
}
\end{thm}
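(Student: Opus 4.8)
The plan is to prove this purely by unfolding Definition~\ref{def:Angle-congruence} and observing that the auxiliary ``docked'' triangle it supplies is forced to coincide with $\triangle\mathbf{b'a'c'}$ itself, after which the conclusion is immediate. Concretely, from the hypothesis $\angle\mathbf{bac}\equiv\angle\mathbf{b'a'c'}$, Definition~\ref{def:Angle-congruence} hands us points $\mathbf{b''},\mathbf{c''}$ with $\triangle\mathbf{b''a'c''}\equiv\triangle\mathbf{bac}$ and with $[B(\mathbf{a'b''b'})\vee B(\mathbf{a'b'b''})]$ and $[B(\mathbf{a'c''c'})\vee B(\mathbf{a'c'c''})]$. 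By Definition~\ref{def:TriAngle-congruence} the congruence $\triangle\mathbf{b''a'c''}\equiv\triangle\mathbf{bac}$ unpacks to $d(\mathbf{b''a'})=d(\mathbf{ba})$, $d(\mathbf{a'c''})=d(\mathbf{ac})$, and $d(\mathbf{c''b''})=d(\mathbf{cb})$.

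Next I would collapse $\mathbf{b''}$ onto $\mathbf{b'}$. Using Axiom D3 (symmetry of $d$) together with the hypothesis $d(\mathbf{ab})=d(\mathbf{a'b'})$, one gets $d(\mathbf{a'b''})=d(\mathbf{b''a'})=d(\mathbf{ba})=d(\mathbf{ab})=d(\mathbf{a'b'})$. Now expand whichever disjunct of $B(\mathbf{a'b''b'})\vee B(\mathbf{a'b'b''})$ holds, using Definition~\ref{def:Between}: either $d(\mathbf{a'b'})=d(\mathbf{a'b''})+d(\mathbf{b''b'})$ or $d(\mathbf{a'b''})=d(\mathbf{a'b'})+d(\mathbf{b'b''})$. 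In both cases the equality $d(\mathbf{a'b''})=d(\mathbf{a'b'})$ forces $d(\mathbf{b''b'})=0$ by additive cancellation in the ordered field (the RCF axioms), whence $\mathbf{b''}=\mathbf{b'}$ by Axiom D2. The identical argument, starting from $d(\mathbf{ac})=d(\mathbf{a'c'})$, gives $\mathbf{c''}=\mathbf{c'}$.

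Substituting $\mathbf{b''}=\mathbf{b'}$ and $\mathbf{c''}=\mathbf{c'}$ back into $\triangle\mathbf{b''a'c''}\equiv\triangle\mathbf{bac}$ yields $\triangle\mathbf{b'a'c'}\equiv\triangle\mathbf{bac}$; reading off its third equation, $d(\mathbf{c'b'})=d(\mathbf{cb})$, and combining this with the two hypotheses (and D3 to rearrange the point pairs), we obtain exactly $\triangle\mathbf{bac}\equiv\triangle\mathbf{b'a'c'}$, as required. There is no genuine obstacle: the proof is a short syntactic unfolding, and the only steps needing care are tracking both orientations inside each betweenness disjunct and checking that degenerate configurations are harmless---the argument never divides, and Axiom D2 handles coincident points uniformly. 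It is worth remarking that this proof uses none of the substantive geometric axioms D4--D7, only D2, D3, and field arithmetic, which is precisely why Definitions~\ref{def:TriAngle-congruence} and~\ref{def:Angle-congruence} were formulated the way they were.
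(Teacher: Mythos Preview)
Your proof is correct and follows essentially the same approach as the paper's: unfold Definition~\ref{def:Angle-congruence} to obtain a docked congruent triangle, then use the side-length hypotheses together with the betweenness alternatives and Axiom~D2 to collapse the auxiliary points onto the given vertices. The only cosmetic difference is that the paper docks $\triangle\mathbf{b'a'c'}$ at vertex $\mathbf{a}$ (obtaining $\mathbf{s},\mathbf{t}$ and showing $\mathbf{s}=\mathbf{b}$, $\mathbf{t}=\mathbf{c}$), whereas you dock $\triangle\mathbf{bac}$ at vertex $\mathbf{a'}$---your direction is in fact the one that follows Definition~\ref{def:Angle-congruence} literally.
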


\begin{center}
\includegraphics[scale=0.25]{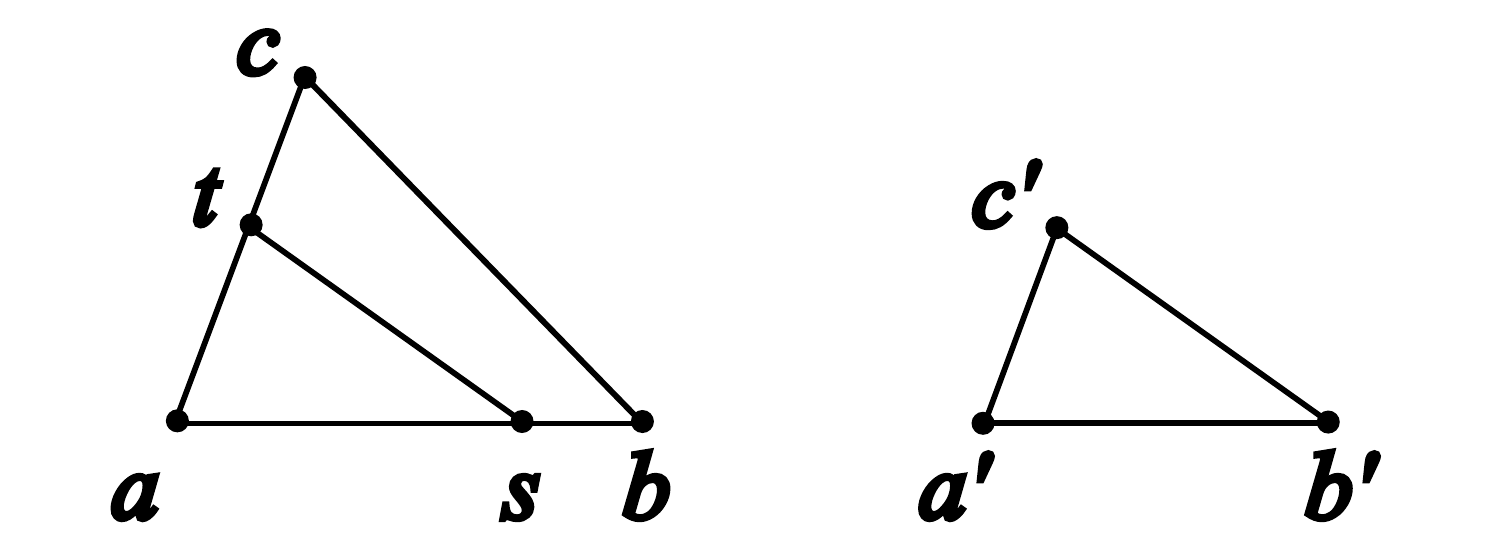}
\par\end{center}

\vspace{-15pt}
\begin{center}
\captionsetup{width=0.46\textwidth}
\captionof{figure}{Triangle congruence criterion SAS.}
\end{center}
\vspace{-15pt}
\begin{proof}
By the definition of $\angle\mathbf{bac}\equiv\angle\mathbf{b'a'c'}$,
there exist $\mathbf{s}$ on the line $\mathbf{ab}$ and $\mathbf{t}$
on the line $\mathbf{ac}$ such that $\triangle\mathbf{sat}\equiv\triangle\mathbf{b'a'c'}$.

We first discuss the two possible cases for point $\mathbf{s}$.

\noindent Case (1): $\mathbf{s}$ is between $\mathbf{a}$ and $\mathbf{b}$.
In this case we have $d(\mathbf{ab})=d(\mathbf{as})+d(\mathbf{sb})$. 

By the definition of $\angle\mathbf{bac}\equiv\angle\mathbf{b'a'c'}$,
we have $d(\mathbf{as})=d(\mathbf{a'b'})$. 

By the given condition $d(\mathbf{ab})=d(\mathbf{a'b'})$, we must
have $d(\mathbf{ab})=d(\mathbf{as})$.

Therefore, $d(\mathbf{sb})=0$.

By Axiom D2, we know $\mathbf{s}=\mathbf{b}$. Namely, point $\mathbf{s}$
coincides with $\mathbf{b}$.

\noindent Case (2): $\mathbf{b}$ is between $\mathbf{a}$ and $\mathbf{s}$.
In this case we have $d(\mathbf{as})=d(\mathbf{ab})+d(\mathbf{bs})$. 

By the definition of $\angle\mathbf{bac}\equiv\angle\mathbf{b'a'c'}$,
we have $d(\mathbf{as})=d(\mathbf{a'b'})$.

By the given condition $d(\mathbf{ab})=d(\mathbf{a'b'})$, we must
have $d(\mathbf{as})=d(\mathbf{ab})$.

Therefore, $d(\mathbf{bs})=0$.

By Axiom D2, we know $\mathbf{b}=\mathbf{s}$. Namely, point $\mathbf{b}$
coincides with $\mathbf{s}$.

\noindent In the same vein, we can prove that point $\mathbf{t}$
coincides with $\mathbf{c}$.

Therefore, $\triangle\mathbf{sat}$ coincides with $\triangle\mathbf{bac}$.
Because $\triangle\mathbf{sat}\equiv\triangle\mathbf{b'a'c'}$, we
must have $\triangle\mathbf{bac}\equiv\triangle\mathbf{b'a'c'}$.
\end{proof}
\begin{thm}
\emph{(Triangle congruence criterion ASA) }

If two triangles have two angles congruent to two angles respectively,
and the sides adjoining the congruent angles are congruent, then the
two triangles are congruent. Formally,\emph{
\begin{align*}
 & \forall\mathbf{a}\forall\mathbf{b}\forall\mathbf{c}\forall\mathbf{a'}\forall\mathbf{b'}\forall\mathbf{c'}\\
 & [\angle\mathbf{bac}\equiv\angle\mathbf{b'a'c'}\wedge\angle\mathbf{abc}\equiv\angle\mathbf{a'b'c'}\wedge d(\mathbf{ab})=d(\mathbf{a'b'})]\rightarrow\triangle\mathbf{cab}\equiv\triangle\mathbf{c'a'b'}.
\end{align*}
}
\end{thm}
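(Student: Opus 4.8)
The plan is to locate the third vertex of the target triangle by transporting copies of $\triangle\mathbf{abc}$ onto the fixed base $\mathbf{a'b'}$ and then forcing those copies to have apex $\mathbf{c'}$. First I would unpack the two angle hypotheses via Definition~\ref{def:Angle-congruence}. From $\angle\mathbf{bac}\equiv\angle\mathbf{b'a'c'}$ there are points $\mathbf{s}$ with $B(\mathbf{a'sb'})\vee B(\mathbf{a'b's})$ and $\mathbf{t}$ with $B(\mathbf{a'tc'})\vee B(\mathbf{a'c't})$ such that $\triangle\mathbf{sa't}\equiv\triangle\mathbf{bac}$. Since this gives $d(\mathbf{a's})=d(\mathbf{ba})=d(\mathbf{ab})=d(\mathbf{a'b'})$ with $\mathbf{s}$ on ray $\mathbf{a'b'}$, the same Axiom D2 argument used in the proof of Theorem~\ref{thm:(SAS)} (applied to $d(\mathbf{sb'})=0$ in either betweenness case) forces $\mathbf{s}=\mathbf{b'}$; hence $\triangle\mathbf{b'a't}\equiv\triangle\mathbf{bac}$, so that $d(\mathbf{a't})=d(\mathbf{ac})$ and $d(\mathbf{b't})=d(\mathbf{bc})$, and $\mathbf{t}$ lies on ray $\mathbf{a'c'}$. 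Symmetrically, from $\angle\mathbf{abc}\equiv\angle\mathbf{a'b'c'}$ I obtain a point $\mathbf{w}$ on ray $\mathbf{b'c'}$ with $\triangle\mathbf{a'b'w}\equiv\triangle\mathbf{abc}$, so that $d(\mathbf{a'w})=d(\mathbf{ac})$ and $d(\mathbf{b'w})=d(\mathbf{bc})$. Thus $\mathbf{t}$ and $\mathbf{w}$ are each an apex of a copy of $\triangle\mathbf{abc}$ erected on $\mathbf{a'b'}$, and---this is a point of bookkeeping about sides of a line that I would take from the earlier development---both lie on the same side of line $\mathbf{a'b'}$ as $\mathbf{c'}$, since the interior points of the rays $\mathbf{a'c'}$ and $\mathbf{b'c'}$ all lie on that side.

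The decisive ingredient is a rigidity lemma: on a prescribed side of the line through two distinct points $\mathbf{p}$ and $\mathbf{q}$, at most one point lies at a given distance from $\mathbf{p}$ and a given distance from $\mathbf{q}$---intuitively, two points realizing the same pair of distances to $\mathbf{p}$ and $\mathbf{q}$ are reflections of one another across line $\mathbf{pq}$ and hence lie on opposite sides of it. This is precisely the $2$-dimensional rigidity of the plane, and it must be distilled from the axioms, chiefly from the dimension axioms D6 and D7; I expect proving it to be the main obstacle, everything else being routine. Granting it, $d(\mathbf{a't})=d(\mathbf{a'w})$ together with $d(\mathbf{b't})=d(\mathbf{b'w})$ yields $\mathbf{t}=\mathbf{w}$. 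This common point lies on ray $\mathbf{a'c'}$ and on ray $\mathbf{b'c'}$, hence on both of the lines $\mathbf{a'c'}$ and $\mathbf{b'c'}$; when $\mathbf{a'},\mathbf{b'},\mathbf{c'}$ are not collinear these are distinct lines, and so meet only at $\mathbf{c'}$, whence $\mathbf{t}=\mathbf{c'}$. Therefore $\triangle\mathbf{b'a'c'}\equiv\triangle\mathbf{b'a't}\equiv\triangle\mathbf{bac}$, and by the symmetry of $d$ (Axiom D3) this is exactly the asserted $\triangle\mathbf{cab}\equiv\triangle\mathbf{c'a'b'}$.

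Finally I would dispose of the degenerate configurations separately. If $d(\mathbf{ab})=0$ then $\mathbf{a}=\mathbf{b}$ and $\mathbf{a'}=\mathbf{b'}$ and the claim collapses to a one-dimensional triviality; and if $\mathbf{a'},\mathbf{b'},\mathbf{c'}$ are collinear, then the congruences of the angles at $\mathbf{a'}$ and $\mathbf{b'}$ force $\mathbf{a},\mathbf{b},\mathbf{c}$ to be collinear as well (a short computation unwinding Definition~\ref{def:Between}), and then the position of $\mathbf{c}$ on line $\mathbf{ab}$ is pinned down by the given distances just as $\mathbf{c'}$ is on line $\mathbf{a'b'}$.
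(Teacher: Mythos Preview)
Your argument parallels the paper's almost step for step: dock at each of the two given angles, use $d(\mathbf{ab})=d(\mathbf{a'b'})$ to pin one docked vertex to a base endpoint, obtain two candidate apexes over the common base, and then show those apexes coincide and equal the third vertex. You dock the unprimed triangle into the primed one (which is the literal direction of Definition~\ref{def:Angle-congruence}); the paper docks the primed into the unprimed, but this is cosmetic. The one substantive difference is how the coincidence of the two apexes is handled. You cast it as a distance-rigidity lemma---at most one point on a given side of line $\mathbf{a'b'}$ realizes prescribed distances to $\mathbf{a'}$ and $\mathbf{b'}$---and correctly flag it as the main obstacle. The paper instead notes that the two docked triangles over the base are congruent, hence $\angle\mathbf{bat}\equiv\angle\mathbf{bav}$, and then simply asserts that congruent angles sharing the side $\mathbf{ab}$ force $\mathbf{a},\mathbf{t},\mathbf{v}$ to be collinear, after which $d(\mathbf{at})=d(\mathbf{av})$ gives $\mathbf{t}=\mathbf{v}$. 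These two formulations are equivalent in content, and the paper supplies no more justification for its version than you do for yours.
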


\begin{center}
\vspace{-5pt}\includegraphics[scale=0.25]{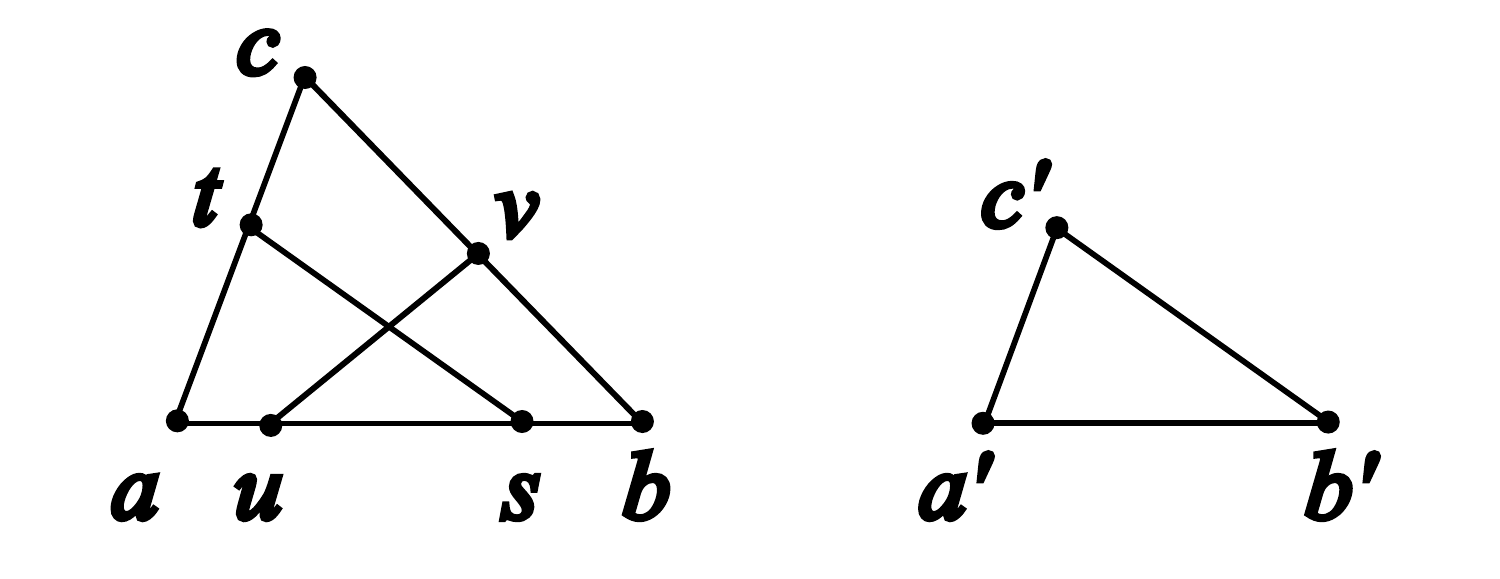}
\par\end{center}

\vspace{-15pt}
\begin{center}
\captionsetup{width=0.46\textwidth}
\captionof{figure}{Triangle congruence criterion ASA.} \label{fig:Congruence-ASA}
\end{center}
\vspace{-5pt}
\begin{proof}
By the definition of $\angle\mathbf{bac}\equiv\angle\mathbf{b'a'c'}$,
there exist points $\mathbf{s}$ and $\mathbf{t}$ on line $\mathbf{ab}$
and $\mathbf{ac}$ respectively such that $\triangle\mathbf{sat}\equiv\triangle\mathbf{b'a'c'}$.
Because $d(\mathbf{ab})=d(\mathbf{a'b'})$, by a similar argument
to that we used in the proof of Theorem \ref{thm:(SAS)}, we can show
that $\mathbf{s}$ coincides with $\mathbf{b}$.

By the definition of $\angle\mathbf{abc}\equiv\angle\mathbf{a'b'c'}$,
there exist points $\mathbf{u}$ and $\mathbf{v}$ on line $\mathbf{ab}$
and $\mathbf{bc}$ respectively such that $\triangle\mathbf{ubv}\equiv\triangle\mathbf{a'b'c'}$.
By a similar argument, we can show that $\mathbf{u}$ coincides with
$\mathbf{a}$. 

Now that $\mathbf{u}$ coincides with $\mathbf{a}$, and $\mathbf{s}$
coincides with $\mathbf{b}$, Figure \ref{fig:Congruence-ASA} can
be simplified as shown below.

\begin{center}

\includegraphics[scale=0.25]{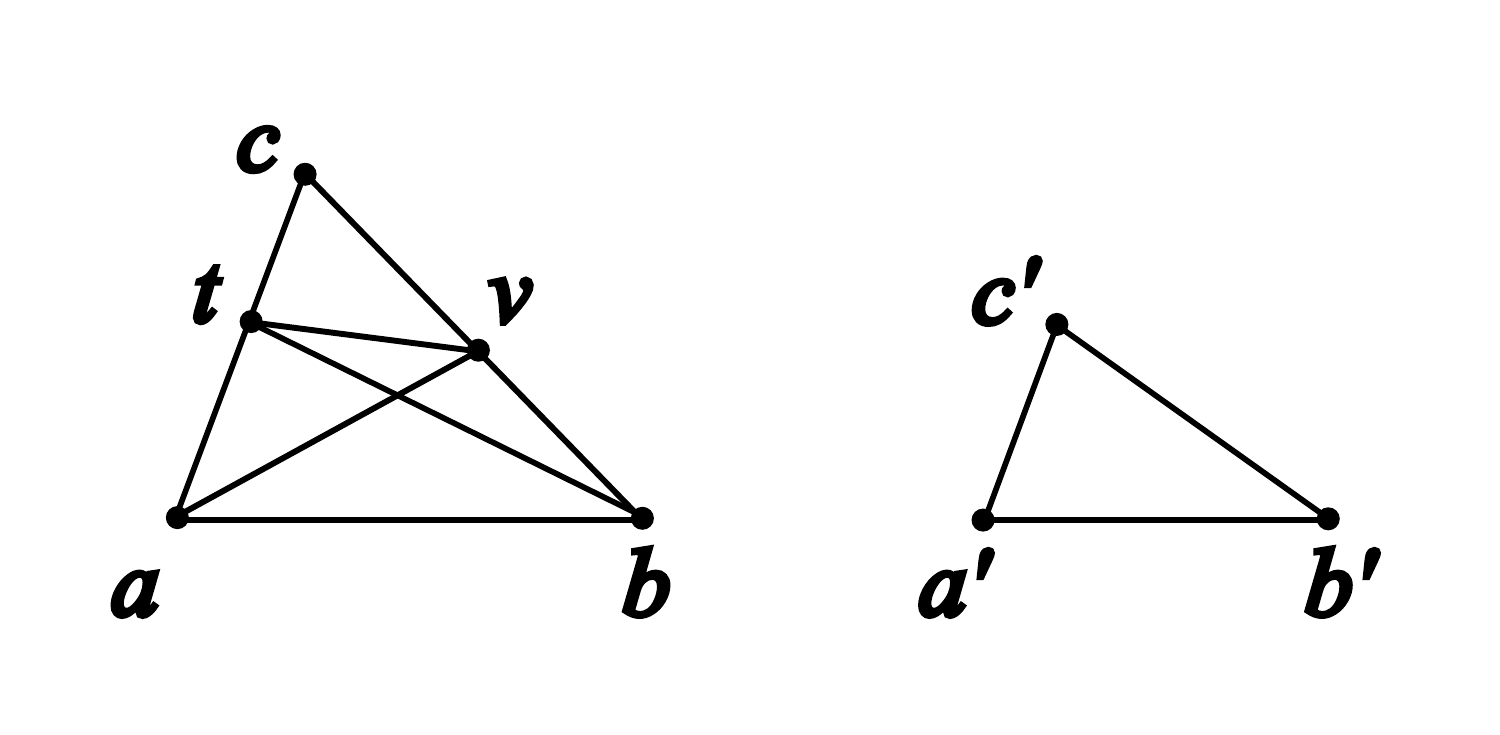}

\end{center}

Notice that by the definitions of $\angle\mathbf{bac}\equiv\angle\mathbf{b'a'c'}$
and $\angle\mathbf{abc}\equiv\angle\mathbf{a'b'c'}$, we have $\triangle\mathbf{abt}\equiv\triangle\mathbf{a'b'c'}$
and $\triangle\mathbf{abv}\equiv\triangle\mathbf{a'b'c'}$. Therefore,
$\triangle\mathbf{abt}\equiv\triangle\mathbf{abv}$. That implies
$\angle\mathbf{bat}\equiv\angle\mathbf{bav}$. Since $\angle\mathbf{bat}$
and $\angle\mathbf{bav}$ have the same side $\mathbf{ab}$, we know
$\mathbf{a},\mathbf{t},\mathbf{v}$ must be collinear. Because $d(\mathbf{at})=d(\mathbf{av})$,
$\mathbf{t}$ must coincide with $\mathbf{v}$. Furthermore, because
$\mathbf{c}$ is on both lines $\mathbf{at}$ and $\mathbf{bt}$,
$\mathbf{c}$ must coincide with $\mathbf{t}$. Namely, all three
points $\mathbf{t},\mathbf{v},\mathbf{c}$ coincide. Therefore, $\triangle\mathbf{cab}\equiv\triangle\mathbf{c'a'b'}$.
\end{proof}
\begin{MyCorollary}

\noindent\emph{\label{thm:(Isosceles-triangle)}(Isosceles triangle)}

In an isosceles triangle, the base angles are congruent to each other.
Formally,\emph{
\[
\forall\mathbf{a}\forall\mathbf{b}\forall\mathbf{c}\,\,\,d(\mathbf{ca})=d(\mathbf{cb})\rightarrow\angle\mathbf{cab}\equiv\angle\mathbf{cba}.
\]
}

\end{MyCorollary}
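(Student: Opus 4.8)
The plan is to derive this as an immediate consequence of the SAS congruence criterion (Theorem~\ref{thm:(SAS)}), using the classical "Pappus trick" of matching the triangle with a relabeled copy of itself. Given $d(\mathbf{ca}) = d(\mathbf{cb})$, I would consider the correspondence $\mathbf{a} \leftrightarrow \mathbf{b}$, $\mathbf{b} \leftrightarrow \mathbf{a}$, $\mathbf{c} \leftrightarrow \mathbf{c}$, i.e. compare $\triangle\mathbf{acb}$ with $\triangle\mathbf{bca}$ (or, in the notation suited to Theorem~\ref{thm:(SAS)}, compare $\triangle\mathbf{acb}$ with $\triangle\mathbf{bca}$ where the apex angle at $\mathbf{c}$ is the "included" angle).

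The key steps, in order: First, note $d(\mathbf{ca}) = d(\mathbf{cb})$ gives one pair of equal sides, and $d(\mathbf{cb}) = d(\mathbf{ca})$ gives the other pair of equal sides (these are the same two numbers, just matched crosswise). Second, I need the included angle at $\mathbf{c}$ to be congruent to itself, that is $\angle\mathbf{acb} \equiv \angle\mathbf{bca}$; since $\angle\mathbf{acb}$ and $\angle\mathbf{bca}$ are literally the same angle (just with the two rays listed in opposite order), this follows directly once I check that Definition~\ref{def:Angle-congruence} is symmetric in the sense that $\angle\mathbf{pvq} \equiv \angle\mathbf{pvq}$ always holds and that swapping $\mathbf{p}$ and $\mathbf{q}$ preserves congruence — which is clear from the symmetry of the SSS-congruence condition in Definition~\ref{def:TriAngle-congruence}. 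Third, apply Theorem~\ref{thm:(SAS)} to conclude $\triangle\mathbf{acb} \equiv \triangle\mathbf{bca}$. Fourth, apply Theorem~\ref{thm:(SSS-AAA)} to this triangle congruence: the corresponding angles are congruent, and in particular the angle at vertex $\mathbf{a}$ in $\triangle\mathbf{acb}$ corresponds to the angle at vertex $\mathbf{b}$ in $\triangle\mathbf{bca}$, which gives exactly $\angle\mathbf{cab} \equiv \angle\mathbf{cba}$ after sorting out the vertex labels.

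Alternatively — and perhaps more cleanly within this framework — one can bypass SAS entirely: since $d(\mathbf{ca}) = d(\mathbf{cb})$, $d(\mathbf{ab}) = d(\mathbf{ba})$ (Axiom~D3), and $d(\mathbf{cb}) = d(\mathbf{ca})$, the definition of SSS triangle congruence (Definition~\ref{def:TriAngle-congruence}) is satisfied for $\triangle\mathbf{cab} \equiv \triangle\mathbf{cba}$ directly, and then Theorem~\ref{thm:(SSS-AAA)} yields $\angle\mathbf{cab} \equiv \angle\mathbf{cba}$ at once. I would likely present this second route, since it makes the corollary truly a corollary of Theorem~\ref{thm:(SSS-AAA)} as the numbering suggests.

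The main obstacle I anticipate is not the geometric idea — which is entirely standard — but the bookkeeping around the angle-congruence definition: I must be careful that $\angle\mathbf{pvq}$ and $\angle\mathbf{qvp}$ denote the same angle under Definition~\ref{def:Angle-congruence}, and that the "docking" definition indeed makes angle congruence reflexive and compatible with the vertex permutations I am invoking. Since Definition~\ref{def:Angle-congruence} is built on top of SSS triangle congruence, which is visibly symmetric under permuting the three points, these facts hold, but they deserve an explicit sentence so that the matching of corresponding angles in the final step is unambiguous. No limiting argument, no use of the Axiom of Similarity, and no case analysis is needed here.
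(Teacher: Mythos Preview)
Your proposal is correct, and the second route you outline---applying Definition~\ref{def:TriAngle-congruence} directly to get $\triangle\mathbf{cab}\equiv\triangle\mathbf{cba}$ from $d(\mathbf{ca})=d(\mathbf{cb})$, $d(\mathbf{ab})=d(\mathbf{ba})$, $d(\mathbf{bc})=d(\mathbf{ca})$, and then invoking Theorem~\ref{thm:(SSS-AAA)}---is exactly the paper's proof. Your instinct to prefer that route over the SAS detour is on target: the paper does not pass through Theorem~\ref{thm:(SAS)} at all here, and the bookkeeping worries you raise about angle-congruence symmetry never arise because the argument stays at the level of SSS triangle congruence.
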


\begin{center}

\includegraphics[scale=0.25]{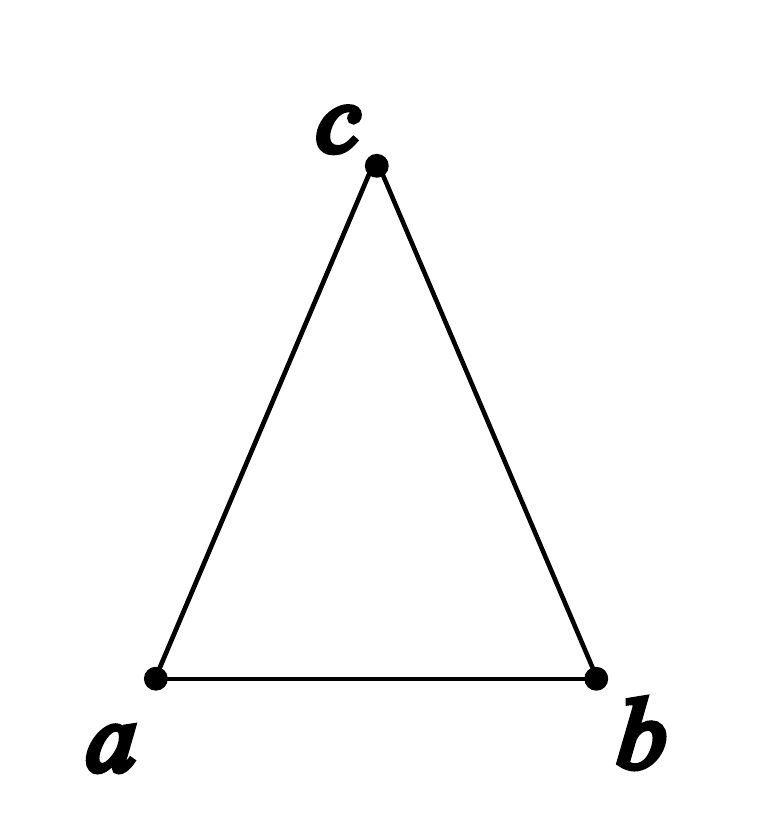}

\end{center}

\vspace{-15pt}
\begin{center}
\captionsetup{width=0.3\textwidth}
\captionof{figure}{Isosceles triangle.}
\end{center}
\vspace{-5pt}
\begin{proof}
The triangle can be viewed as two triangles in different orientations.
The first triangle $\triangle\mathbf{cab}$ has three sides $\mathbf{ca}$,
$\mathbf{ab}$, $\mathbf{bc}$ in this order. The second triangle
$\triangle\mathbf{cba}$ has three sides $\mathbf{cb}$, $\mathbf{ba}$,
$\mathbf{ac}$ in this order. The three sides are congruent respectively.
Therefore, $\triangle\mathbf{cab}\equiv\triangle\mathbf{cba}$. By
Theorem \ref{thm:(SSS-AAA)}, the corresponding angles are congruent.
Namely, $\angle\mathbf{cab}\equiv\angle\mathbf{cba}$.
\end{proof}

\begin{thm}
\emph{\label{thm:Similar-triangle-equal-angles} (Triangle similarity
SSS$\rightarrow$AAA) }

If two triangles are similar, then their corresponding angles are
congruent. Formally,
\[
\begin{aligned} & \forall\mathbf{a}\forall\mathbf{b}\forall\mathbf{c}\forall\mathbf{b'}\forall\mathbf{c'}\\
 & [\mathbf{a}\neq\mathbf{b}\wedge B(\mathbf{ab'b})\wedge B(\mathbf{ac'c})\wedge d(\mathbf{ab'})\cdot d(\mathbf{ac})=d(\mathbf{ac'})\cdot d(\mathbf{ab})]\\
 & \rightarrow[\angle\mathbf{ab'c'}\equiv\angle\mathbf{abc}\wedge\angle\mathbf{ac'b'}\equiv\angle\mathbf{acb}].
\end{aligned}
\]
\end{thm}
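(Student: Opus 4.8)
The plan is to obtain both angle congruences directly from the \emph{Axiom of Similarity} D5, used twice, by producing the explicit ``docking'' triangle required by Definition~\ref{def:Angle-congruence}. First observe that, after unfolding Definition~\ref{def:Between}, the hypotheses of the theorem ($\mathbf{a}\neq\mathbf{b}$, $B(\mathbf{ab'b})$, $B(\mathbf{ac'c})$, $d(\mathbf{ab'})\cdot d(\mathbf{ac})=d(\mathbf{ac'})\cdot d(\mathbf{ab})$) are literally the hypotheses of Axiom D5 for the very same points. So D5 gives at once
\[
d(\mathbf{ab'})\cdot d(\mathbf{bc}) = d(\mathbf{b'c'})\cdot d(\mathbf{ab}),
\]
a relation I abbreviate $(\star)$. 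Since $\mathbf{a}\neq\mathbf{b}$ we have $d(\mathbf{ab})\neq 0$ by Axiom~D2; since $B(\mathbf{ab'b})$ forces $d(\mathbf{ab'})\leqslant d(\mathbf{ab})$, relation $(\star)$ also yields $d(\mathbf{b'c'})\leqslant d(\mathbf{bc})$.

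Using the basic segment-laying-off property (existence of a point at a prescribed distance along a ray, a routine consequence of Axiom~D4 together with continuity), I introduce $\mathbf{p''}$ with $B(\mathbf{b\,p''\,a})$ and $d(\mathbf{b\,p''})=d(\mathbf{a\,b'})$, and $\mathbf{q''}$ with $B(\mathbf{b\,q''\,c})$ and $d(\mathbf{b\,q''})=d(\mathbf{b'\,c'})$; these exist precisely because of the two inequalities just noted. The key step is to apply Axiom~D5 a \emph{second} time, now with apex $\mathbf{b}$ and cevian points $\mathbf{p''}$ on segment $\mathbf{ba}$ and $\mathbf{q''}$ on segment $\mathbf{bc}$: the two betweenness hypotheses hold by construction, and the product hypothesis $d(\mathbf{b\,p''})\cdot d(\mathbf{b\,c})=d(\mathbf{b\,q''})\cdot d(\mathbf{b\,a})$ is exactly $(\star)$ rewritten via $d(\mathbf{bp''})=d(\mathbf{ab'})$ and $d(\mathbf{bq''})=d(\mathbf{b'c'})$ (and Axiom~D3). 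Hence D5 delivers $d(\mathbf{b\,p''})\cdot d(\mathbf{a\,c})=d(\mathbf{p''\,q''})\cdot d(\mathbf{b\,a})$; substituting $d(\mathbf{bp''})=d(\mathbf{ab'})$, using the theorem's product hypothesis $d(\mathbf{ab'})\cdot d(\mathbf{ac})=d(\mathbf{ac'})\cdot d(\mathbf{ab})$, and cancelling $d(\mathbf{ab})\neq 0$ in RCF, I get $d(\mathbf{p''\,q''})=d(\mathbf{a\,c'})$.

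Now $d(\mathbf{p''b})=d(\mathbf{ab'})$, $d(\mathbf{bq''})=d(\mathbf{b'c'})$, and $d(\mathbf{q''p''})=d(\mathbf{c'a})$ (using Axiom~D3), i.e.\ $\triangle\mathbf{p''bq''}\equiv\triangle\mathbf{ab'c'}$ by Definition~\ref{def:TriAngle-congruence}; together with $B(\mathbf{b\,p''\,a})$ and $B(\mathbf{b\,q''\,c})$, Definition~\ref{def:Angle-congruence}---taking $\mathbf{p''},\mathbf{q''}$ as the witnesses---gives $\angle\mathbf{ab'c'}\equiv\angle\mathbf{abc}$. The congruence $\angle\mathbf{ac'b'}\equiv\angle\mathbf{acb}$ follows by running the identical argument with the pairs $(\mathbf{b},\mathbf{b'})$ and $(\mathbf{c},\mathbf{c'})$ interchanged; the hypotheses are symmetric under this swap except that $\mathbf{a}\neq\mathbf{b}$ becomes $\mathbf{a}\neq\mathbf{c}$, and in the residual case $\mathbf{a}=\mathbf{c}$ (which forces $\mathbf{c'}=\mathbf{a}$) the required congruence holds trivially from Definition~\ref{def:Angle-congruence}; the other degenerate subcases ($\mathbf{b'}=\mathbf{a}$, $\mathbf{b'}=\mathbf{b}$, $\mathbf{b}=\mathbf{c}$) are absorbed uniformly by the computation above. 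The main obstacle is precisely recognizing that the second invocation of D5---in the triangle with apex $\mathbf{b}$---is exactly what manufactures the missing ``cross-side'' length $d(\mathbf{p''q''})=d(\mathbf{ac'})$ needed to dock a copy of the small triangle at $\mathbf{b}$; everything after that is bookkeeping (matching the hypotheses across the two applications of D5, the standard segment construction, and the trivial degenerate cases).
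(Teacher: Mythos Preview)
Your proof is correct and is essentially the same as the paper's: both apply Axiom~D5 once at apex $\mathbf{a}$ to obtain the third-side proportion, then construct a docked copy of $\triangle\mathbf{ab'c'}$ at vertex $\mathbf{b}$ (your $\mathbf{p''},\mathbf{q''}$ are the paper's $\mathbf{u},\mathbf{v}$) and apply D5 a second time at apex $\mathbf{b}$ to recover the missing side length $d(\mathbf{p''q''})=d(\mathbf{ac'})$. Your version is somewhat more explicit about the axioms invoked and the degenerate cases, but the strategy and the two key D5 applications are identical.
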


\begin{center}
\includegraphics[scale=0.2]{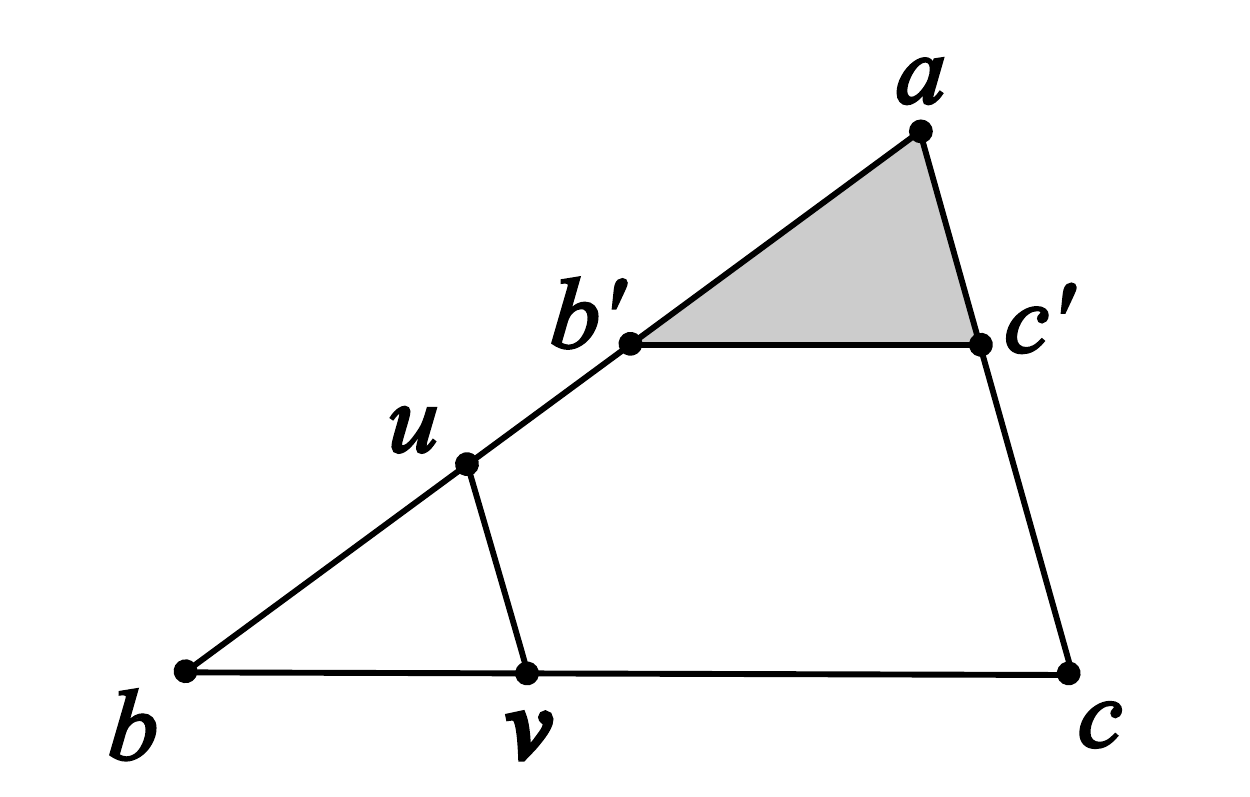}
\par\end{center}

\vspace{-15pt}
\begin{center}
\captionsetup{width=0.68\textwidth}
\captionof{figure}{Corresponding angles of similar triangles are congruent.}
\end{center}
\vspace{-15pt}
\begin{proof}
Let $k=d(\mathbf{ab'})/d(\mathbf{ab})$. By the given conditions,
we have $d(\mathbf{ab'})=k\cdot d(\mathbf{ab})$ and $d(\mathbf{ac'})=k\cdot d(\mathbf{ac})$.
By Axiom D5, $d(\mathbf{b'c'})=k\cdot d(\mathbf{bc})$.

Find the point $\mathbf{u}$ such that $B(\mathbf{aub})$ and $d(\mathbf{ub})=d(\mathbf{ab'})$.
Similarly, find the point $\mathbf{v}$ such that $B(\mathbf{bvc})$
and $d(\mathbf{bv})=d(\mathbf{b'c'})$. By Axiom D5, we have $d(\mathbf{uv})=k\cdot d(\mathbf{ac})$.
Therefore, $\triangle\mathbf{ab'c'}\equiv\triangle\mathbf{ubv}$.
This implies $\angle\mathbf{ab'c'}\equiv\angle\mathbf{abc}$. Similarly
we can prove $\angle\mathbf{ac'b'}\equiv\angle\mathbf{acb}$.
\end{proof}

\begin{thm}
\emph{(Triangle similarity criterion AA) \label{thm:Triangle-similarity-criterion-AA} }

If two triangles have two pairs of corresponding angles congruent,
then they are similar. Formally,
\[
\begin{aligned} & \forall\mathbf{a}\forall\mathbf{b}\forall\mathbf{c}\forall\mathbf{b'}\forall\mathbf{c'}\\
 & [\mathbf{a}\neq\mathbf{b}\wedge\mathbf{a}\neq\mathbf{b'}\wedge B(\mathbf{ab'b})\wedge B(\mathbf{ac'c})\wedge\angle\mathbf{ab'c'}\equiv\angle\mathbf{abc}]\rightarrow[d(\mathbf{ab'})\cdot d(\mathbf{ac})=d(\mathbf{ac'})\cdot d(\mathbf{ab})].
\end{aligned}
\]
\end{thm}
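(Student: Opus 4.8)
The plan is to pin down the ``scaling factor'' $k$ from the side $\mathbf{ab}$, reconstruct on the ray $\mathbf{ac}$ the point $\mathbf{c}''$ that \emph{would} make the two triangles similar, use the already-proved forward direction (Theorem~\ref{thm:Similar-triangle-equal-angles}) to show that $\triangle\mathbf{abc}''$ has the same angle at $\mathbf{b}$ as the one the hypothesis forces on $\triangle\mathbf{abc}$, and then invoke the ASA criterion to conclude $\mathbf{c}''=\mathbf{c}$. Throughout we read $\triangle\mathbf{abc}$ as a proper (non-collinear) triangle, which in particular gives $\mathbf{c}'\neq\mathbf{a}$ and legitimizes the use of ASA.

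First I would set up the scale. Since $\mathbf{a}\neq\mathbf{b}'$, Axiom D2 gives $d(\mathbf{ab}')\neq0$, so let $k:=d(\mathbf{ab})/d(\mathbf{ab}')$; from $B(\mathbf{ab}'\mathbf{b})$, i.e.\ $d(\mathbf{ab})=d(\mathbf{ab}')+d(\mathbf{b}'\mathbf{b})$, we get $k\geqslant1$ and $d(\mathbf{ab})=k\cdot d(\mathbf{ab}')$. Using Axiom D4 to extend the segment $\mathbf{ac}'$, together with the uniqueness of a point on a ray at a prescribed distance from its origin, I would obtain the point $\mathbf{c}''$ lying on the same ray out of $\mathbf{a}$ as $\mathbf{c}'$ and $\mathbf{c}$ with $d(\mathbf{ac}'')=k\cdot d(\mathbf{ac}')$; because $k\geqslant1$ this yields $B(\mathbf{ac}'\mathbf{c}'')$ (degenerating to $\mathbf{c}''=\mathbf{c}'$ exactly when $k=1$, i.e.\ $\mathbf{b}'=\mathbf{b}$).

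Now apply Theorem~\ref{thm:Similar-triangle-equal-angles} to $\triangle\mathbf{abc}''$ with the inner points $\mathbf{b}'$ on $\mathbf{ab}$ and $\mathbf{c}'$ on $\mathbf{ac}''$: the hypotheses $\mathbf{a}\neq\mathbf{b}$, $B(\mathbf{ab}'\mathbf{b})$, $B(\mathbf{ac}'\mathbf{c}'')$ are in place, and the proportion $d(\mathbf{ab}')\cdot d(\mathbf{ac}'')=d(\mathbf{ac}')\cdot d(\mathbf{ab})$ holds since both sides equal $k\cdot d(\mathbf{ab}')\cdot d(\mathbf{ac}')$. Hence $\angle\mathbf{ab}'\mathbf{c}'\equiv\angle\mathbf{abc}''$. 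Together with the hypothesis $\angle\mathbf{ab}'\mathbf{c}'\equiv\angle\mathbf{abc}$ and the symmetry and transitivity of angle congruence (Definition~\ref{def:Angle-congruence}), this gives $\angle\mathbf{abc}''\equiv\angle\mathbf{abc}$. Since $\mathbf{c}$ and $\mathbf{c}''$ lie on a common ray out of $\mathbf{a}$, the angles $\angle\mathbf{bac}''$ and $\angle\mathbf{bac}$ have the same pair of sides, so $\angle\mathbf{bac}''\equiv\angle\mathbf{bac}$. Feeding these two angle congruences and the shared side $\mathbf{ab}$ into the ASA congruence criterion yields $\triangle\mathbf{c}''\mathbf{ab}\equiv\triangle\mathbf{cab}$, and in particular $d(\mathbf{ac}'')=d(\mathbf{ac})$; thus $d(\mathbf{ac})=k\cdot d(\mathbf{ac}')$, i.e.\ $d(\mathbf{ab}')\cdot d(\mathbf{ac})=d(\mathbf{ac}')\cdot d(\mathbf{ab})$.

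I expect the genuine friction to lie not in this main line but in its supporting facts: the ``uniqueness of a point on a ray at a given distance'' (needed to place $\mathbf{c}''$ canonically and to read off $\mathbf{c}''=\mathbf{c}$) and that $\equiv$ on angles is an equivalence relation---both elementary, but to be cited carefully from the earlier material---together with the need to exclude (or separately dispose of) the degenerate collinear configuration, in which the AA statement itself is not valid. If one prefers to avoid the ASA step, the equality $d(\mathbf{ac})=k\cdot d(\mathbf{ac}')$ can instead be forced by trichotomy: were it false, one of $\mathbf{c},\mathbf{c}''$ would lie strictly between $\mathbf{a}$ and the other, making its angle at $\mathbf{b}$ strictly smaller by the monotonicity of angle order (Definition~\ref{def:Angles-comparison}), contradicting $\angle\mathbf{abc}''\equiv\angle\mathbf{abc}$---trading ASA for the order theory of angles.
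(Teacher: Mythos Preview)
Your argument is correct and follows essentially the same route as the paper: define $k=d(\mathbf{ab})/d(\mathbf{ab}')$, construct $\mathbf{c}''$ on the ray from $\mathbf{a}$ through $\mathbf{c}'$ with $d(\mathbf{ac}'')=k\cdot d(\mathbf{ac}')$, invoke Theorem~\ref{thm:Similar-triangle-equal-angles} to obtain $\angle\mathbf{ab}'\mathbf{c}'\equiv\angle\mathbf{abc}''$, and then force $\mathbf{c}''=\mathbf{c}$. The only cosmetic difference is in that last step: the paper dispatches it by the contradiction-with-angle-congruence argument you list as your alternative, whereas your primary write-up routes it through ASA---both are fine, and you correctly flag that the supporting ingredients (uniqueness on a ray, angle-congruence as an equivalence relation, non-degeneracy) are being used tacitly.
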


\begin{center}
\includegraphics[scale=0.2]{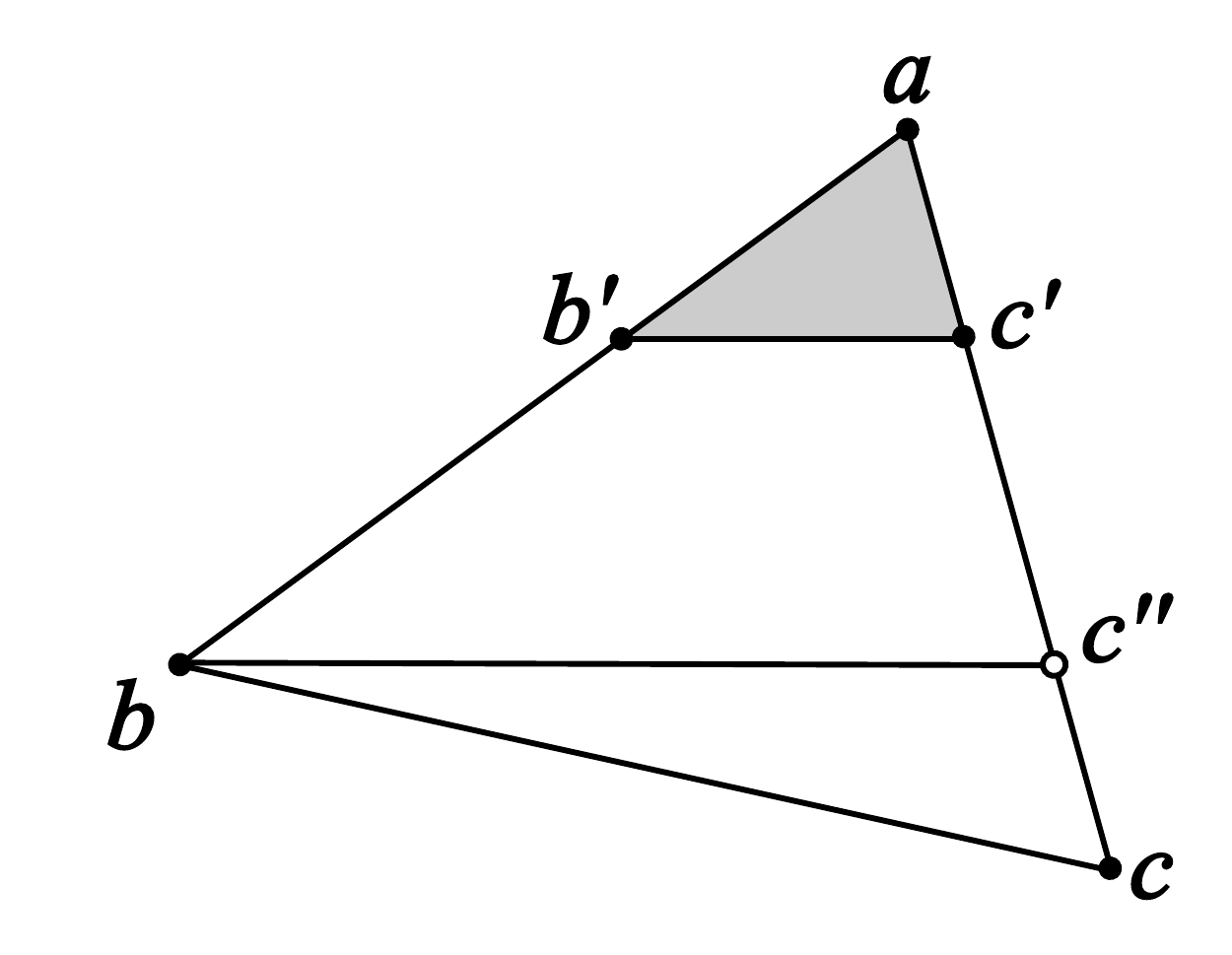}
\par\end{center}

\vspace{-15pt}
\begin{center}
\captionsetup{width=0.42\textwidth}
\captionof{figure}{Triangle similarity criterion AA.}
\end{center}
\vspace{-15pt}
\begin{proof}
Let $k=d(\mathbf{ab})/d(\mathbf{ab'})$. On line $\mathbf{ac'}$,
find the point $\mathbf{c''}$ such that $d(\mathbf{ac''})=k\cdot d(\mathbf{ac'})$
and $B(\mathbf{acc''})$ or $B(\mathbf{ac''c})$. By Axiom D5, $\triangle\mathbf{ab'c'}$
is similar to $\triangle\mathbf{abc''}$. By Theorem \ref{thm:Similar-triangle-equal-angles},
$\angle\mathbf{ab'c'}\equiv\angle\mathbf{abc''}$. But it is given
that $\angle\mathbf{ab'c'}\equiv\angle\mathbf{abc}$. Therefore, $\angle\mathbf{abc''}\equiv\angle\mathbf{abc}$.
We say $\mathbf{c}$ must coincide with $\mathbf{c''}$, because both
$\mathbf{c}$ and $\mathbf{c''}$ are on the same line $\mathbf{ac'}$.
Otherwise it would contradict the fact $\angle\mathbf{abc''}\equiv\angle\mathbf{abc}$.
Therefore, $d(\mathbf{ac''})=d(\mathbf{ac})$ and $d(\mathbf{ac})=k\cdot d(\mathbf{ac'})$,
and it implies $d(\mathbf{ab'})\cdot d(\mathbf{ac})=d(\mathbf{ac'})\cdot d(\mathbf{ab})$.
\end{proof}

\begin{thm}
\emph{(Interior angle sum of a triangle) \label{thm:Interior-angle-sum}}

In any triangle $\triangle\mathbf{abc}$, the sum of its interior
angles is a straight angle. Formally,
\begin{align*}
 & \forall\mathbf{a}\forall\mathbf{b}\forall\mathbf{c}\forall\mathbf{p}\forall\mathbf{q}\forall\mathbf{v}\\
\, & B(\mathbf{apb})\wedge B(\mathbf{bvc})\wedge B(\mathbf{aqc})\wedge d(\mathbf{ap})=d(\mathbf{pb})\wedge d(\mathbf{bv})=d(\mathbf{vc})\wedge d(\mathbf{aq})=d(\mathbf{qc})\\
 & \rightarrow\angle\mathbf{cba}\equiv\angle\mathbf{cvq\wedge\angle\mathbf{bac}\equiv\angle\mathbf{qvp}\wedge\angle\mathbf{acb}\equiv\angle\mathbf{pvb}}.
\end{align*}
\end{thm}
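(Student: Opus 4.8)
The plan is to recognize the three claimed angle congruences as instances of Theorem~\ref{thm:(SSS-AAA)} (SSS$\to$AAA) applied to the four small triangles into which the three midsegments cut $\triangle\mathbf{abc}$. The hypotheses say precisely that $\mathbf{p},\mathbf{v},\mathbf{q}$ are the midpoints of $\mathbf{ab},\mathbf{bc},\mathbf{ac}$, so $d(\mathbf{ap})=d(\mathbf{pb})=\tfrac12 d(\mathbf{ab})$ and likewise for the other two sides. We may assume $\mathbf{a},\mathbf{b},\mathbf{c}$ pairwise distinct; in the degenerate cases all three conclusions are immediate.

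The only place where the Euclidean (parallel-postulate) content enters is the computation of the midsegment lengths via the Axiom of Similarity (Axiom D5). First I would apply D5 with apex $\mathbf{b}$, taking $\mathbf{b'}=\mathbf{p}$ on $\mathbf{ba}$ and $\mathbf{c'}=\mathbf{v}$ on $\mathbf{bc}$: the two betweenness premises of D5 hold by the given data together with symmetry of betweenness (which follows from Axiom D3), and the product premise $d(\mathbf{bp})\cdot d(\mathbf{bc})=d(\mathbf{bv})\cdot d(\mathbf{ba})$ holds because $\mathbf{p},\mathbf{v}$ are midpoints (both sides equal $\tfrac12 d(\mathbf{ba})\cdot d(\mathbf{bc})$); the conclusion of D5 then reads $d(\mathbf{pv})\cdot d(\mathbf{ba})=\tfrac12 d(\mathbf{ba})\cdot d(\mathbf{ac})$, whence $d(\mathbf{pv})=\tfrac12 d(\mathbf{ac})$. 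Applying D5 with apex $\mathbf{c}$ gives $d(\mathbf{qv})=\tfrac12 d(\mathbf{ab})$ (and, similarly, with apex $\mathbf{a}$, $d(\mathbf{pq})=\tfrac12 d(\mathbf{bc})$, though the first two suffice). Combining these with the midpoint lengths and Axiom D3, all four triangles $\triangle\mathbf{apq}$, $\triangle\mathbf{pbv}$, $\triangle\mathbf{qvc}$ and the medial triangle have the side lengths $\tfrac12 d(\mathbf{ab}),\tfrac12 d(\mathbf{bc}),\tfrac12 d(\mathbf{ca})$; matching the sides in the order required by Definition~\ref{def:TriAngle-congruence}, one reads off $\triangle\mathbf{vbp}\equiv\triangle\mathbf{cvq}$, $\triangle\mathbf{qcv}\equiv\triangle\mathbf{pvb}$ and $\triangle\mathbf{paq}\equiv\triangle\mathbf{qvp}$.

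Next I would apply Theorem~\ref{thm:(SSS-AAA)} to each of these congruences, reading off the angle pair at the matched vertices: $\angle\mathbf{vbp}\equiv\angle\mathbf{cvq}$, $\angle\mathbf{qcv}\equiv\angle\mathbf{pvb}$ and $\angle\mathbf{paq}\equiv\angle\mathbf{qvp}$. Finally, since $\mathbf{v}$ lies on ray $\mathbf{bc}$ and $\mathbf{p}$ on ray $\mathbf{ba}$, the congruence $\angle\mathbf{cba}\equiv\angle\mathbf{vbp}$ follows immediately from Definition~\ref{def:Angle-congruence} by taking $\triangle\mathbf{cba}$ itself as the docked triangle (the two side-conditions there are exactly $B(\mathbf{bvc})$ and $B(\mathbf{bpa})$); likewise $\angle\mathbf{acb}\equiv\angle\mathbf{qcv}$ and $\angle\mathbf{bac}\equiv\angle\mathbf{paq}$. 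Chaining these with the three congruences of the previous step — using that $\equiv$ on angles is an equivalence relation — yields $\angle\mathbf{cba}\equiv\angle\mathbf{cvq}$, $\angle\mathbf{acb}\equiv\angle\mathbf{pvb}$ and $\angle\mathbf{bac}\equiv\angle\mathbf{qvp}$, which is the assertion; the interpretation as ``angle sum $=$ straight angle'' then comes from $\mathbf{v}$ lying between $\mathbf{b}$ and $\mathbf{c}$.

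I expect the only genuine obstacle is the first step: one has to instantiate D5 with the right apex and the right pairing of endpoints so that its conclusion actually delivers a midsegment length, and one must keep track of which premise of D5 becomes trivial because of the midpoint hypothesis. Everything afterward is bookkeeping with Definitions~\ref{def:TriAngle-congruence} and~\ref{def:Angle-congruence}; the minor traps are (i) ordering the vertices in each SSS congruence so that Theorem~\ref{thm:(SSS-AAA)} produces the intended pair of angles, and (ii) being explicit that angle congruence is reflexive and transitive and is unaffected by sliding the two ray-defining points along their rays — routine facts that should already be available at this point.
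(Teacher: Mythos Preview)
Your proposal is correct and follows essentially the same route as the paper's own proof: compute the midsegment lengths via Axiom D5, read off the triangle congruences $\triangle\mathbf{vbp}\equiv\triangle\mathbf{cvq}$ and $\triangle\mathbf{paq}\equiv\triangle\mathbf{qvp}$ by SSS, and extract the three angle congruences with Theorem~\ref{thm:(SSS-AAA)}. If anything, you are slightly more explicit than the paper in justifying the identification $\angle\mathbf{cba}\equiv\angle\mathbf{vbp}$ (and its two analogues) via the docking definition, which the paper leaves implicit.
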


\begin{center}
\vspace{-10pt}\includegraphics[scale=0.2]{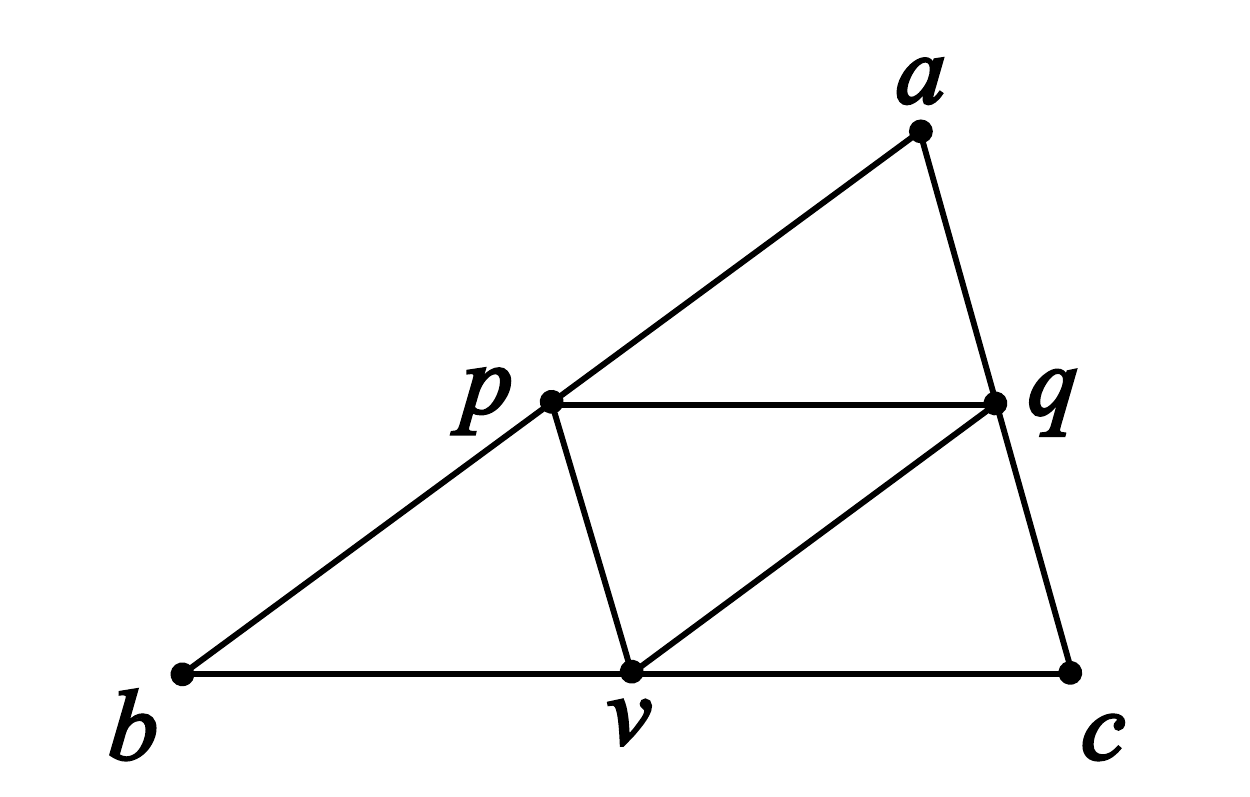}
\par\end{center}

\vspace{-15pt}
\begin{center}
\captionsetup{width=0.65\textwidth}
\captionof{figure}{The interior angle sum of a triangle is a straight angle.}
\end{center}
\vspace{-5pt}
\begin{proof}
The given conditions imply that $\mathbf{p},\mathbf{q},\mathbf{v}$
are the midpoints of $\mathbf{ab}$, $\mathbf{ac}$, and $\mathbf{bc}$
respectively. By Axiom D5, $d(\mathbf{pv})=\frac{1}{2}d(\mathbf{ac})$.
This implies $d(\mathbf{pv})=d(\mathbf{qc})$. In the same vein, we
can show $d(\mathbf{qv})=d(\mathbf{pb})$. Therefore, we have $\triangle\mathbf{vbp}\equiv\triangle\mathbf{cvq}$.
As a consequence, $\angle\mathbf{vbp}$ is congruent to $\angle\mathbf{cvq}$,
and $\angle\mathbf{qcv}$ is congruent to $\angle\mathbf{pvb}$.

By Axiom D5 again, we have $d(\mathbf{pv})=d(\mathbf{aq})$, and $d(\mathbf{qv})=d(\mathbf{ap})$.
Therefore, we have $\triangle\mathbf{paq}\equiv\triangle\mathbf{qvp}$.
As a consequence, $\angle\mathbf{paq}$ is congruent to $\angle\mathbf{qvp}$.
Because $\angle\mathbf{cvq}$, $\angle\mathbf{qvp}$, and $\angle\mathbf{pvb}$
form a straight angle, the interior angles of the triangle, $\angle\mathbf{cba}$,
$\angle\mathbf{bac}$, and $\angle\mathbf{acb}$ also ``sum up''
to a straight angle.
\end{proof}
It is well known that the interior angle sum theorem is equivalent
to Euclid\textquoteright s Parallel Postulate (EPP), and we will omit
the proof of EPP here. 

\medskip{}

As we can see in the following, using the distance function $d$,
the Pythagorean theorem can be formally and directly stated in the
language $L(\mathscr{E}_{d})$.
\begin{thm}
\emph{\label{thm:(Pythagoras)}(Pythagoras) 
\[
\forall\mathbf{a}\forall\mathbf{b}\forall\mathbf{c}\,\,\,R(\mathbf{bac})\rightarrow d^{2}(\mathbf{ab})+d^{2}(\mathbf{ac})=d^{2}(\mathbf{bc}),
\]
}

\noindent where $d^{2}(\mathbf{ab})$ is the abbreviation of $\left(d(\mathbf{ab)}\right)^{2}$.
\end{thm}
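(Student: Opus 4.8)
The plan is to carry out the classical similar-triangles proof of the Pythagorean theorem --- dropping the altitude from the right-angle vertex to the hypotenuse --- now entirely inside $\mathscr{E}_{d}$. Write the right triangle as $\triangle\mathbf{abc}$ with $R(\mathbf{bac})$, so the right angle is at $\mathbf{a}$ and $\mathbf{bc}$ is the hypotenuse. First I would record the nondegeneracy facts hidden in Definition~\ref{def:Perpendicular}: the three points are pairwise distinct, and $\mathbf{c}$ cannot lie on line $\mathbf{ab}$, since a point of that line equidistant from $\mathbf{b}$ and from the reflection $\mathbf{b'}$ of $\mathbf{b}$ across $\mathbf{a}$ would have to be the midpoint $\mathbf{a}$, contradicting $\mathbf{a}\neq\mathbf{c}$. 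Hence $\triangle\mathbf{abc}$ is a genuine triangle, and in particular $d(\mathbf{bc})\neq 0$.

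Next I would construct the foot $\mathbf{h}$ of the altitude. Rather than first building a perpendicular directly, the most economical route is to transport the base angle: construct $\mathbf{h}$ on the ray from $\mathbf{b}$ through $\mathbf{c}$ so that $\angle\mathbf{bah}\equiv\angle\mathbf{bca}$ (angle transport being routine). Since $\mathbf{h}$ lies on that ray, $\angle\mathbf{abh}\equiv\angle\mathbf{abc}$, so $\triangle\mathbf{bah}$ and $\triangle\mathbf{bca}$ agree in two pairs of angles; applying Theorem~\ref{thm:Interior-angle-sum} to both triangles (and using that angle addition cancels) forces the third angle $\angle\mathbf{ahb}$ to be congruent to $\angle\mathbf{bac}$, i.e.\ $\mathbf{ah}\perp\mathbf{bc}$ at $\mathbf{h}$. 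Moreover, because a right angle is congruent to its own supplement (Definition~\ref{def:Perpendicular}), Theorem~\ref{thm:Interior-angle-sum} shows the base angles $\angle\mathbf{abc}$ and $\angle\mathbf{acb}$ are each strictly smaller than a right angle; a crossbar-type incidence argument then pins $\mathbf{h}$ strictly between $\mathbf{b}$ and $\mathbf{c}$, i.e.\ $B(\mathbf{bhc})$.

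With $\mathbf{h}$ in place the two similarities fall out: $\triangle\mathbf{bha}\sim\triangle\mathbf{bac}$ and $\triangle\mathbf{cha}\sim\triangle\mathbf{cab}$, each because the two triangles share an acute angle (at $\mathbf{b}$, resp.\ at $\mathbf{c}$) and each has a right angle. I would obtain these from Theorem~\ref{thm:Triangle-similarity-criterion-AA}, after first moving one triangle onto a subtriangle of the other by Theorem~\ref{thm:(SAS)} together with the segment-extension axiom D4, so that the criterion applies in the nested configuration it is stated for (equivalently, one upgrades the AA criterion to general position once and for all). Reading off the corresponding-side proportions and clearing denominators gives the two ``geometric-mean'' identities $d(\mathbf{bh})\cdot d(\mathbf{bc})=d^{2}(\mathbf{ab})$ and $d(\mathbf{ch})\cdot d(\mathbf{bc})=d^{2}(\mathbf{ac})$.

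It then remains only to add. From $B(\mathbf{bhc})$ and Definition~\ref{def:Between} we have $d(\mathbf{bh})+d(\mathbf{hc})=d(\mathbf{bc})$, and Axiom~D3 gives $d(\mathbf{ch})=d(\mathbf{hc})$; summing the two identities and using distributivity in the real closed field yields $d^{2}(\mathbf{ab})+d^{2}(\mathbf{ac})=d(\mathbf{bc})\bigl(d(\mathbf{bh})+d(\mathbf{hc})\bigr)=d^{2}(\mathbf{bc})$, as required. The main obstacle, as the two middle paragraphs signal, is not the algebra but the incidence bookkeeping: $\mathscr{E}_{d}$ has no primitive betweenness, so the ``obvious'' facts --- that the transported angle genuinely delivers a point $\mathbf{h}$ on the segment $\mathbf{bc}$, that $\angle\mathbf{ahb}$ is right, and that the AA criterion can be brought to bear after repositioning --- must each be reduced to the distance axioms D1--D7 and the congruence/similarity theorems already proved. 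Everything else is routine.
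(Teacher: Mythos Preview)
Your proposal is correct and follows essentially the same route as the paper: drop the altitude from the right-angle vertex to the hypotenuse, obtain the two geometric-mean identities $d(\mathbf{bh})\cdot d(\mathbf{bc})=d^{2}(\mathbf{ab})$ and $d(\mathbf{ch})\cdot d(\mathbf{bc})=d^{2}(\mathbf{ac})$ via AA similarity, and add. The only differences are tactical: the paper simply ``drops a perpendicular'' to obtain the foot and then proves $B(\mathbf{bpc})$ by contradiction (assuming $B(\mathbf{pbc})$ forces the interior angles of $\triangle\mathbf{apc}$ to exceed a straight angle, contradicting Theorem~\ref{thm:Interior-angle-sum}), whereas you construct the foot by transporting the base angle and then invoke a crossbar-type argument for betweenness; these are minor variations, not a different strategy.
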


\begin{center}
\includegraphics[scale=0.2]{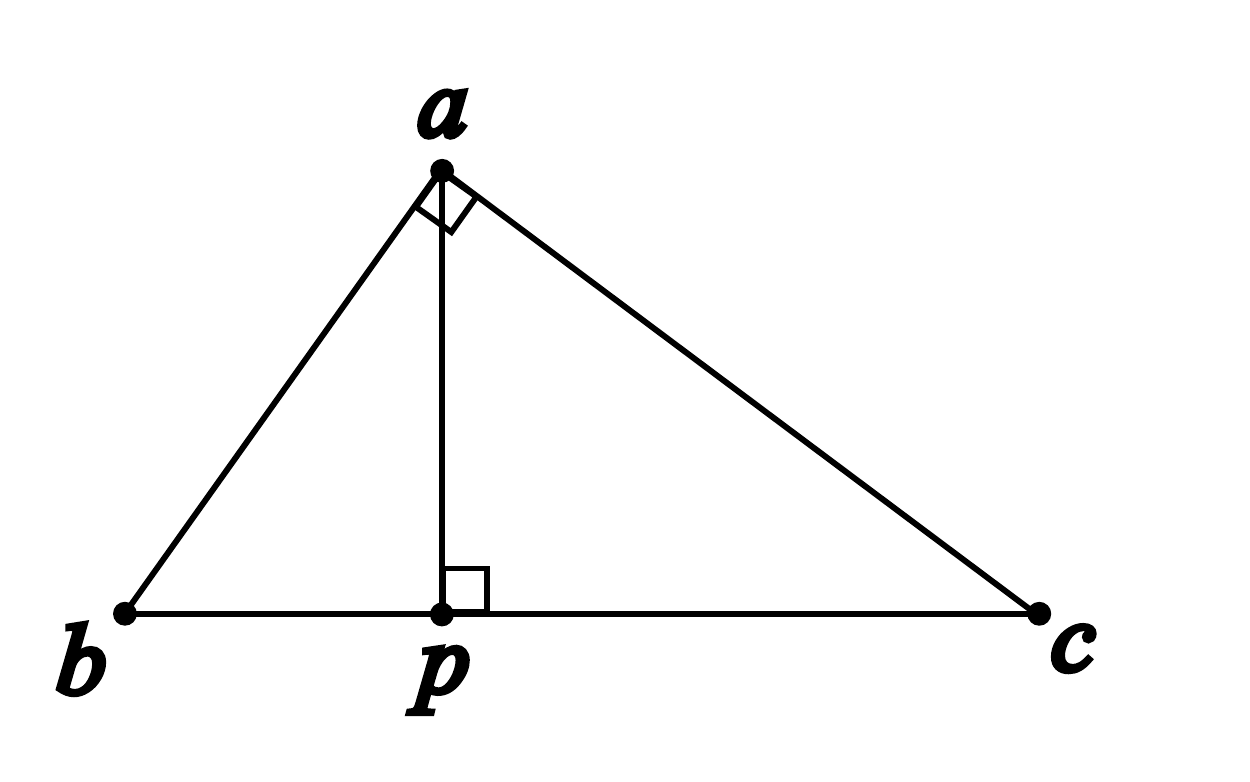}
\par\end{center}

\vspace{-15pt}
\begin{center}
\captionsetup{width=0.42\textwidth}
\captionof{figure}{The Pythagorean theorem.} \label{fig:Pythagorean-Thm}
\end{center}
\vspace{-5pt}
\begin{proof}
From $\mathbf{a}$, drop a perpendicular to $\mathbf{bc}$ and let
the foot be $\mathbf{p}$, as shown in Figure \ref{fig:Pythagorean-Thm}.

First, we want to prove that $B(\mathbf{bpc})$. 

We use proof by contradiction. For the sake of contradiction, suppose
that the foot $\mathbf{p}$ is not between $\mathbf{b}$ and $\mathbf{c}$.
Without loss of generality, assume $B(\mathbf{pbc})$, as shown in
the figure below.

\begin{center}

\includegraphics[scale=0.2]{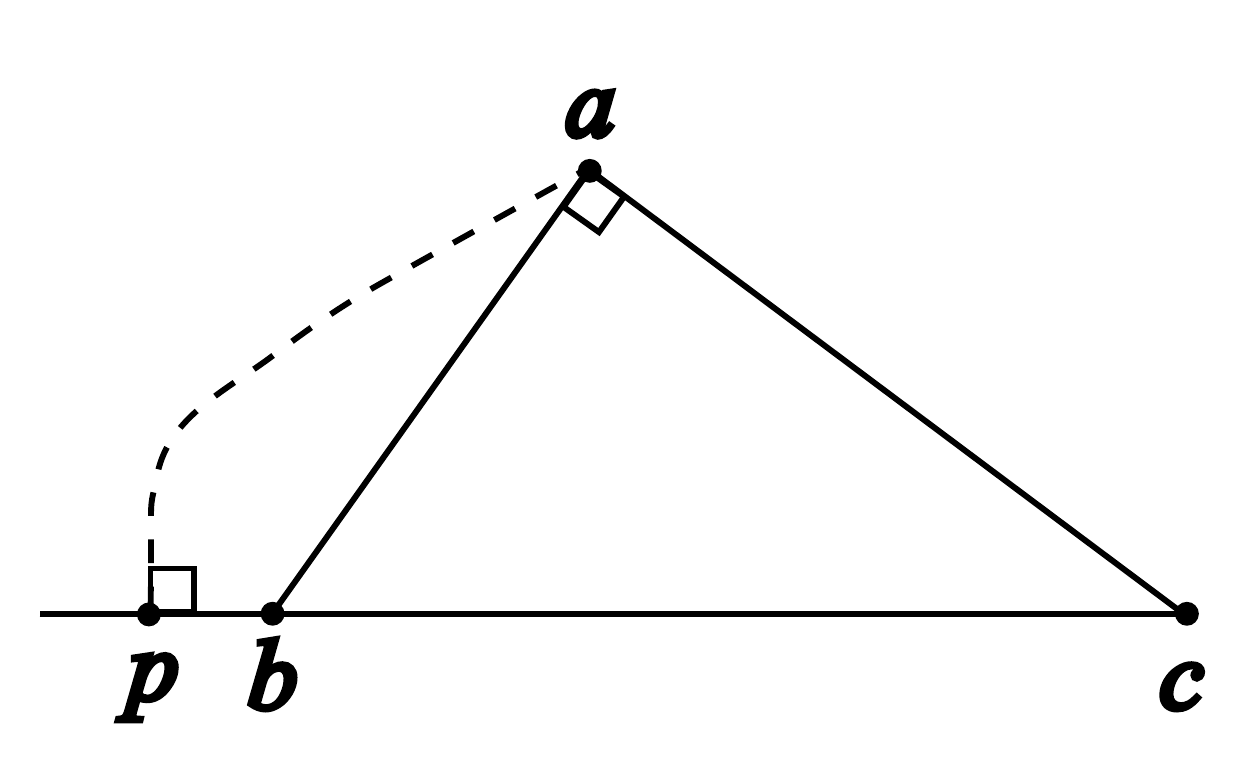}

\end{center}

Consider the triangle $\triangle\mathbf{apc}$. The interior angles
are $\angle\mathbf{apc}$, $\angle\mathbf{pac}$, and $\angle\mathbf{acp}$.

Note that $\angle\mathbf{pac}$ is the sum of $\angle\mathbf{pab}$
and $\angle\mathbf{bac}$, and $\angle\mathbf{bac}$ is a right angle.
$\angle\mathbf{apc}$ is also a right angle. 

In the sum of the interior angles of triangle $\triangle\mathbf{apc}$,
it includes two right angles ($\angle\mathbf{apc}$ and $\angle\mathbf{bac}$),
plus two extra angles ($\angle\mathbf{pab}$ and $\angle\mathbf{acp}$).
This contradicts Theorem \ref{thm:Interior-angle-sum}, which asserts
the sum of the interior angles of a triangle is equal to a straight
angle. Therefore, we must have $B(\mathbf{bpc})$.

Now consider Figure \ref{fig:Pythagorean-Thm} again.

The triangle $\Delta\mathbf{apb}$ is a right triangle. $\Delta\mathbf{bpa}$
and $\Delta\mathbf{bac}$ have two pairs of congruent angles: $\angle\mathbf{pba}$
is congruent to $\angle\mathbf{abc}$, and $\angle\mathbf{bpa}$ is
congruent to $\angle\mathbf{bac}$ (both are right angles). By Theorem
\ref{thm:Triangle-similarity-criterion-AA},
\[
d(\mathbf{bp})\cdot d(\mathbf{bc})=d(\mathbf{ab})\cdot d(\mathbf{ab}).
\]

\noindent Namely,
\begin{equation}
d^{2}(\mathbf{ab})=d(\mathbf{bp})\cdot d(\mathbf{bc}).\label{eq:Pythagoras-proof-1}
\end{equation}
In the similar vein, we can prove that
\begin{equation}
d^{2}(\mathbf{ac})=d(\mathbf{pc})\cdot d(\mathbf{bc}).\label{eq:Pythagoras-proof-2}
\end{equation}
Adding Equation (\ref{eq:Pythagoras-proof-1}) to Equation (\ref{eq:Pythagoras-proof-2}),
we obtain
\begin{align*}
d^{2}(\mathbf{ab})+d^{2}(\mathbf{ac}) & =d(\mathbf{bp})\cdot d(\mathbf{bc})+d(\mathbf{pc})\cdot d(\mathbf{bc})\\
 & =\left[d(\mathbf{bp})+d(\mathbf{pc})\right]\cdot d(\mathbf{bc})\\
 & =d(\mathbf{bc})\cdot d(\mathbf{bc})\\
 & =d^{2}(\mathbf{bc}).
\end{align*}
\end{proof}
The next theorem is the triangle inequality for the distance function.
\begin{thm}
\emph{\label{thm:(Triangle-inequality)}(Triangle inequality) 
\[
\forall\mathbf{a}\forall\mathbf{b}\forall\mathbf{c}\,\,\,d(\mathbf{ab})\leqslant d(\mathbf{ac})+d(\mathbf{bc}).
\]
}
\end{thm}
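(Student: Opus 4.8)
The plan is to reduce the statement to the Pythagorean theorem (Theorem~\ref{thm:(Pythagoras)}) by dropping a perpendicular, after clearing away the degenerate and collinear configurations by hand. First I would dispose of the trivial cases: if $\mathbf{a}=\mathbf{b}$, then $d(\mathbf{ab})=0$ by Axiom~D2 and the inequality is just Axiom~D1; if $\mathbf{a}=\mathbf{c}$ or $\mathbf{b}=\mathbf{c}$, then by Axiom~D3 one of the two terms on the right equals $d(\mathbf{ab})$ while the other is nonnegative by Axiom~D1. So from now on assume $\mathbf{a},\mathbf{b},\mathbf{c}$ are pairwise distinct.

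Next I would settle the collinear case as a small lemma, since it is also needed inside the main case. If $L(\mathbf{abc})$ holds (Definition~\ref{def:Collinear}), then one of $B(\mathbf{abc})$, $B(\mathbf{bca})$, $B(\mathbf{cab})$ holds; unwinding the definition of betweenness (Definition~\ref{def:Between}) and using Axioms~D1 and D3, each of the three sub-cases yields $d(\mathbf{ab})\leqslant d(\mathbf{ac})+d(\mathbf{bc})$ at once (with equality exactly when $\mathbf{c}$ is between $\mathbf{a}$ and $\mathbf{b}$). The same elementary argument in fact gives $d(\mathbf{xy})\leqslant d(\mathbf{xz})+d(\mathbf{zy})$ whenever $\mathbf{x},\mathbf{y},\mathbf{z}$ are collinear, which is the form I will reuse.

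For the main case, assume $\mathbf{a},\mathbf{b},\mathbf{c}$ are not collinear. Drop a perpendicular from $\mathbf{c}$ to the line $\mathbf{ab}$ and call the foot $\mathbf{p}$ (the same construction already invoked in the proof of Theorem~\ref{thm:(Pythagoras)}); then $\mathbf{p}$ lies on line $\mathbf{ab}$, so $\mathbf{a},\mathbf{b},\mathbf{p}$ are collinear, and $\mathbf{p}\neq\mathbf{c}$. When $\mathbf{p}$ is distinct from both $\mathbf{a}$ and $\mathbf{b}$, the angles $R(\mathbf{cpa})$ and $R(\mathbf{cpb})$ are right (Definition~\ref{def:Perpendicular}), so Theorem~\ref{thm:(Pythagoras)} gives $d^{2}(\mathbf{ca})=d^{2}(\mathbf{cp})+d^{2}(\mathbf{pa})$ and $d^{2}(\mathbf{cb})=d^{2}(\mathbf{cp})+d^{2}(\mathbf{pb})$; hence $d^{2}(\mathbf{pa})\leqslant d^{2}(\mathbf{ca})$ and $d^{2}(\mathbf{pb})\leqslant d^{2}(\mathbf{cb})$, and since all distances are nonnegative (Axiom~D1), the order axioms of RCF upgrade these to $d(\mathbf{pa})\leqslant d(\mathbf{ca})$ and $d(\mathbf{pb})\leqslant d(\mathbf{cb})$. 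Applying the collinear lemma to $\mathbf{a},\mathbf{b},\mathbf{p}$ gives $d(\mathbf{ab})\leqslant d(\mathbf{ap})+d(\mathbf{pb})$, and chaining the three inequalities (with Axiom~D3) yields $d(\mathbf{ab})\leqslant d(\mathbf{ca})+d(\mathbf{cb})=d(\mathbf{ac})+d(\mathbf{bc})$. The two boundary sub-cases $\mathbf{p}=\mathbf{a}$ (so $R(\mathbf{cab})$) and $\mathbf{p}=\mathbf{b}$ (so $R(\mathbf{cba})$) are each closed by a single application of Theorem~\ref{thm:(Pythagoras)} to that right triangle, which bounds $d(\mathbf{ab})$ above by $d(\mathbf{bc})$, resp.\ by $d(\mathbf{ac})$.

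I expect no genuine obstacle here: once Theorem~\ref{thm:(Pythagoras)} is available, the only geometric input is the existence of the perpendicular foot $\mathbf{p}$ (already taken for granted in the proof of that theorem), and everything else is RCF arithmetic. The chief nuisance is simply the bookkeeping of cases---the degenerate coincidences, the three betweenness alternatives in the collinear lemma, and the three possible positions of $\mathbf{p}$ on line $\mathbf{ab}$. If one prefers to avoid perpendiculars entirely, an alternative is the classical Euclid~I.20 route: extend $\mathbf{ac}$ beyond $\mathbf{c}$ to a point $\mathbf{d}$ with $d(\mathbf{cd})=d(\mathbf{cb})$ via Axiom~D4, observe that $\triangle\mathbf{cbd}$ is isosceles, and compare the angle at $\mathbf{b}$ with the angle at $\mathbf{d}$ inside $\triangle\mathbf{abd}$; but that needs an ``in a triangle the greater side is subtended by the greater angle'' lemma not yet established in the excerpt, so the Pythagorean route is the more economical one.
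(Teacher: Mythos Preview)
Your proof is correct and follows essentially the same approach as the paper: drop a perpendicular from $\mathbf{c}$ to line $\mathbf{ab}$ with foot $\mathbf{p}$, apply Theorem~\ref{thm:(Pythagoras)} to the two right triangles to bound $d(\mathbf{ap})\leqslant d(\mathbf{ac})$ and $d(\mathbf{pb})\leqslant d(\mathbf{bc})$, and combine. The only organizational difference is that you first prove the collinear triangle inequality as a lemma and then invoke it uniformly for $\mathbf{a},\mathbf{b},\mathbf{p}$, whereas the paper instead splits explicitly into the three cases $B(\mathbf{apb})$, $B(\mathbf{abp})$, $B(\mathbf{pab})$; your packaging is slightly tidier but the substance is identical.
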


\begin{center}
\includegraphics[scale=0.2]{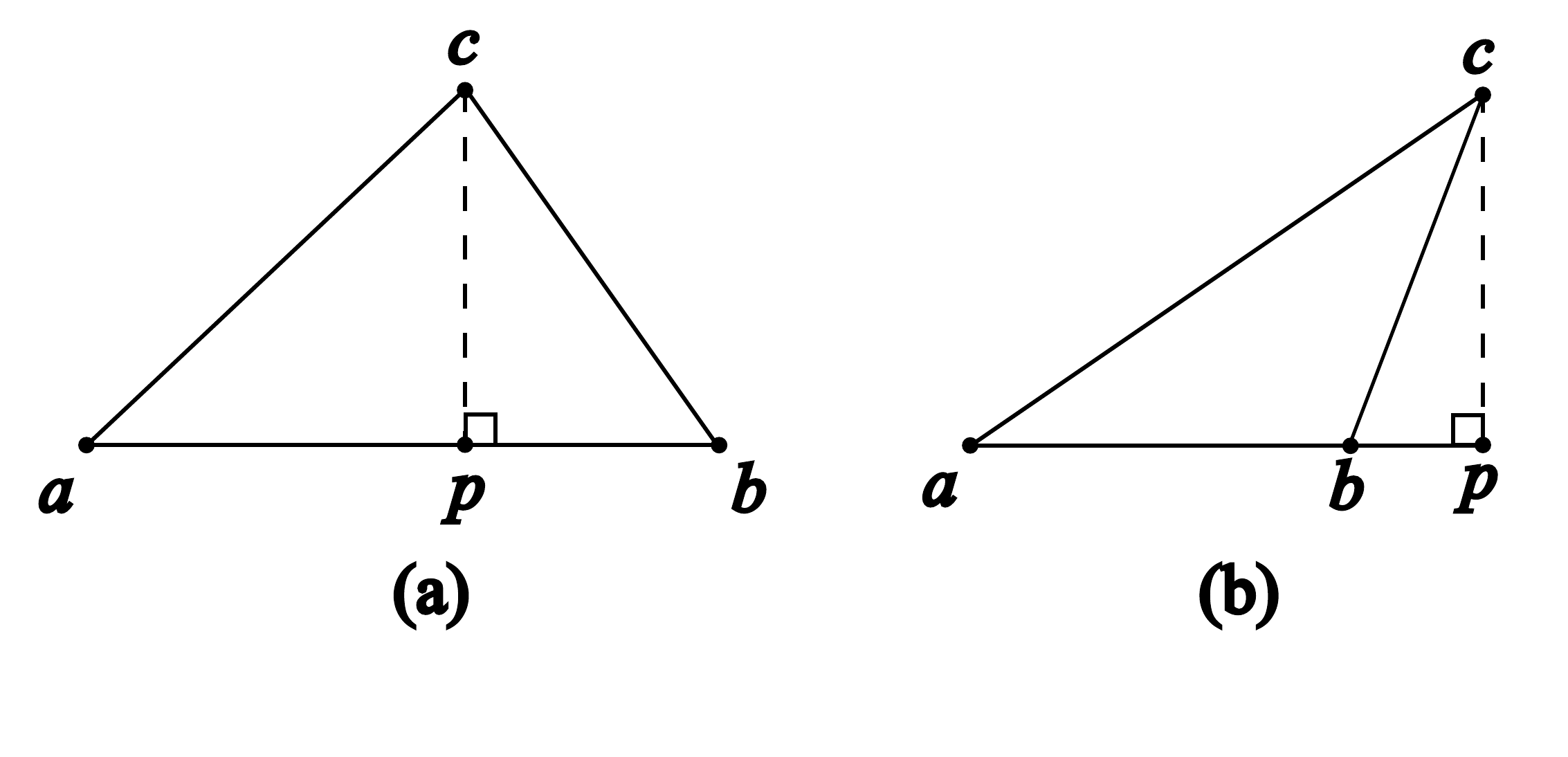}
\par\end{center}

\vspace{-15pt}
\begin{center}
\captionsetup{width=0.3\textwidth}
\captionof{figure}{Triangle inequality.} \label{fig:Triangle-inequality-1}
\end{center}
\vspace{-5pt}
\begin{proof}
From $\mathbf{c}$, drop a perpendicular to $\mathbf{ab}$ and let
the foot be $\mathbf{p}$. The triangle $\Delta\mathbf{apc}$ is a
right triangle. The point $\mathbf{p}$ may be between $\mathbf{a}$
and $\mathbf{b}$, or not between $\mathbf{a}$ and $\mathbf{b}$.
We discuss three cases:\medskip{}

\noindent Case (1). $B(\mathbf{apb})$ (Figure \ref{fig:Triangle-inequality-1}a)

By Theorem \ref{thm:(Pythagoras)}, in the right triangle $\Delta\mathbf{cpa}$,
we have
\[
d^{2}(\mathbf{ap})+d^{2}(\mathbf{pc})=d^{2}(\mathbf{ac}).
\]
Because $0\leqslant d^{2}(\mathbf{pc})$, we have 
\[
d^{2}(\mathbf{ap})\leqslant d^{2}(\mathbf{ac}).
\]
By Axiom D1, this implies
\begin{equation}
d(\mathbf{ap})\leqslant d(\mathbf{ac}).\label{eq:First-right-triangle}
\end{equation}
Similarly, if we consider the right triangle $\Delta\mathbf{cpb}$,
we obtain
\begin{equation}
d(\mathbf{pb})\leqslant d(\mathbf{bc}).\label{eq:Second-right-triangle}
\end{equation}
Adding Equation (\ref{eq:First-right-triangle}) to Equation (\ref{eq:Second-right-triangle}),
we obtain
\[
d(\mathbf{ap})+d(\mathbf{pb})\leqslant d(\mathbf{ac})+d(\mathbf{bc}).
\]
Because $B(\mathbf{apb})$, $d(\mathbf{ab})=d(\mathbf{ap})+d(\mathbf{pb})$.
Therefore, 
\[
d(\mathbf{ab})\leqslant d(\mathbf{ac})+d(\mathbf{bc}).
\]

\noindent Case (2). $B(\mathbf{abp})$ (Figure \ref{fig:Triangle-inequality-1}b)

In the right triangle $\Delta\mathbf{cpa}$, similar to Case (1),
we have
\begin{equation}
d(\mathbf{ap})\leqslant d(\mathbf{ac}).\label{eq:First-right-triangle-1}
\end{equation}

\noindent Because $B(\mathbf{abp})$, we have $d(\mathbf{ap})=d(\mathbf{ab})+d(\mathbf{bp})$.

\noindent By Axiom D1, $0\leqslant d(\mathbf{bp})$, we have $d(\mathbf{ab})\leqslant d(\mathbf{ap})$.
Therefore, $d(\mathbf{ab})\leqslant d(\mathbf{ac})$. This means the
single side $\mathbf{ac}$ is at least as long as $\mathbf{ab}$.
When we add a nonnegative length $d(\mathbf{bc})$, we have
\[
d(\mathbf{ab})\leqslant d(\mathbf{ac})+d(\mathbf{bc}).
\]

\noindent Case (3). $B(\mathbf{pab})$

The same proof for Case (2) works for this case too, by a symmetry
argument.\textbf{ }
\end{proof}
\begin{thm}
\emph{\label{thm:Five-segments}(Axiom of five segments, see Appendix
\ref{appendix:b})
\[
\begin{aligned} & \forall\mathbf{a}\forall\mathbf{b}\forall\mathbf{c}\forall\mathbf{d}\forall\mathbf{\mathbf{a}'}\forall\mathbf{\mathbf{b}'}\forall\mathbf{c}'\forall\mathbf{\mathbf{d}'}\\
 & [\mathbf{a}\neq\mathbf{b}\wedge B(\mathbf{abd})\wedge B(\mathbf{\mathbf{a}'\mathbf{b}'d'})\wedge d(\mathbf{ac})=d(\mathbf{a'c'})\wedge d(\mathbf{ab})=d(\mathbf{a'b'})\wedge d(\mathbf{bc})=d(\mathbf{b'c'})\\
 & \wedge d(\mathbf{bd})=d(\mathbf{b'd'})]\rightarrow d(\mathbf{cd})=d(\mathbf{c'd'}).
\end{aligned}
\]
}
\end{thm}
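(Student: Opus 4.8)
The plan is to prove the sharper statement $d^{2}(\mathbf{cd})=d^{2}(\mathbf{c'd'})$ and then conclude $d(\mathbf{cd})=d(\mathbf{c'd'})$ from Axiom D1 together with the real-closed-field axioms. The whole argument runs on the Pythagorean theorem (Theorem~\ref{thm:(Pythagoras)}). First I would drop a perpendicular from $\mathbf{c}$ to the line through $\mathbf{a}$ and $\mathbf{b}$, calling the foot $\mathbf{p}$, and likewise drop a perpendicular from $\mathbf{c'}$ to the line through $\mathbf{a'}$ and $\mathbf{b'}$ (which is a genuine line, since $d(\mathbf{a'b'})=d(\mathbf{ab})\neq0$), with foot $\mathbf{p'}$; set $h=d(\mathbf{pc})$ and $h'=d(\mathbf{p'c'})$. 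The key lemma is what I will call the \emph{master identity}: for every point $\mathbf{x}$ lying on line $\mathbf{ab}$ one has $d^{2}(\mathbf{xc})=d^{2}(\mathbf{xp})+h^{2}$. This is exactly Theorem~\ref{thm:(Pythagoras)} applied to the triangle with right angle at $\mathbf{p}$ and legs $\mathbf{px},\mathbf{pc}$ whenever $\mathbf{x},\mathbf{p},\mathbf{c}$ are three distinct noncollinear points, and it holds trivially in the degenerate situations $\mathbf{x}=\mathbf{p}$ and $\mathbf{c}=\mathbf{p}$ (the latter being the case in which $\mathbf{c}$ already lies on the line, so $h=0$). The primed version holds for $\mathbf{x}$ on line $\mathbf{a'b'}$, with $h'$ in place of $h$.

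Next I would introduce signed coordinates on the two lines, compatible with distance and betweenness: place $\mathbf{a}$ and $\mathbf{a'}$ at $0$ and $\mathbf{b},\mathbf{b'}$ at the common value $\beta:=d(\mathbf{ab})=d(\mathbf{a'b'})$, which is nonzero since $\mathbf{a}\neq\mathbf{b}$. Using $B(\mathbf{abd})$, $B(\mathbf{a'b'd'})$ together with $d(\mathbf{bd})=d(\mathbf{b'd'})$, both $\mathbf{d}$ and $\mathbf{d'}$ then sit at the common value $\delta:=\beta+d(\mathbf{bd})$; let $\mathbf{p}$ and $\mathbf{p'}$ have coordinates $\tau$ and $\tau'$. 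In these coordinates $d^{2}(\mathbf{ap})=\tau^{2}$, $d^{2}(\mathbf{bp})=(\tau-\beta)^{2}$, $d^{2}(\mathbf{dp})=(\delta-\tau)^{2}$, and likewise with primes. Applying the master identity at $\mathbf{x}=\mathbf{a}$ and $\mathbf{x}=\mathbf{b}$ and invoking the hypotheses $d(\mathbf{ac})=d(\mathbf{a'c'})$ and $d(\mathbf{bc})=d(\mathbf{b'c'})$ gives $\tau^{2}+h^{2}={\tau'}^{2}+{h'}^{2}$ and $(\tau-\beta)^{2}+h^{2}=(\tau'-\beta)^{2}+{h'}^{2}$; subtracting yields $2\beta\tau=2\beta\tau'$, so $\tau=\tau'$ (using $\beta\neq0$), and then $h^{2}={h'}^{2}$, hence $h=h'$ by Axiom D1. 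Finally the master identity at $\mathbf{x}=\mathbf{d}$ (and at $\mathbf{x}=\mathbf{d'}$) gives $d^{2}(\mathbf{cd})=(\delta-\tau)^{2}+h^{2}=(\delta-\tau')^{2}+{h'}^{2}=d^{2}(\mathbf{c'd'})$, which finishes the proof.

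The hard part is the coordinatization step: it presumes that a line carries a signed-length coordinate in which distance is the absolute difference and betweenness is the order (a \emph{ruler} statement). Since that machinery is only developed later (in the analytic-geometry section), in practice one would instead obtain the handful of facts actually used — namely $d(\mathbf{ap})=d(\mathbf{a'p'})$, $d(\mathbf{bp})=d(\mathbf{b'p'})$, $d(\mathbf{dp})=d(\mathbf{d'p'})$ and $h=h'$ — by a finite case analysis on the betweenness arrangements of the collinear quadruples $\{\mathbf{p},\mathbf{a},\mathbf{b},\mathbf{d}\}$ and $\{\mathbf{p'},\mathbf{a'},\mathbf{b'},\mathbf{d'}\}$, where $B(\mathbf{abd})$ already fixes part of each arrangement; in every case the relation $d^{2}(\mathbf{ap})-d^{2}(\mathbf{bp})=d^{2}(\mathbf{a'p'})-d^{2}(\mathbf{b'p'})$ extracted above, combined with $d(\mathbf{ab})=d(\mathbf{a'b'})\neq0$, forces the feet into corresponding positions. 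That bookkeeping is routine but tedious, which is why the coordinate formulation is the cleaner presentation. I also considered the purely synthetic route — deduce $\triangle\mathbf{abc}\equiv\triangle\mathbf{a'b'c'}$ from SSS, pass to the supplementary angle at $\mathbf{b}$, and apply Theorem~\ref{thm:(SAS)} to $\triangle\mathbf{dbc}$ and $\triangle\mathbf{d'b'c'}$ — but with the ``docking'' definition of angle congruence (Definition~\ref{def:Angle-congruence}) the lemma one needs, that supplements of congruent angles are congruent, unwinds into precisely this five-segment statement, so it is circular; the Pythagorean computation above is the genuine argument.
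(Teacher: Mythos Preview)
Your proof is correct and follows essentially the same approach as the paper: both drop perpendiculars from $\mathbf{c}$ and $\mathbf{c'}$ to the base lines, invoke the Pythagorean theorem to pin down the altitude and foot position from the given segment data, and then read off $d(\mathbf{cd})$; the paper carries this out via explicit closed-form formulas for $h$ and $d(\mathbf{cd})$ in terms of $\alpha,\beta,\gamma,\delta$, whereas you obtain the same equalities by a cleaner subtraction argument in signed coordinates. Your acknowledgment that the signed-coordinate step must, at this point in the development, be unwound into a finite betweenness case analysis matches the paper's own treatment, which likewise splits on whether the foot $\mathbf{p}$ lies between $\mathbf{a}$ and $\mathbf{b}$.
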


\vspace{-10pt}
\begin{center}
\includegraphics[scale=0.2]{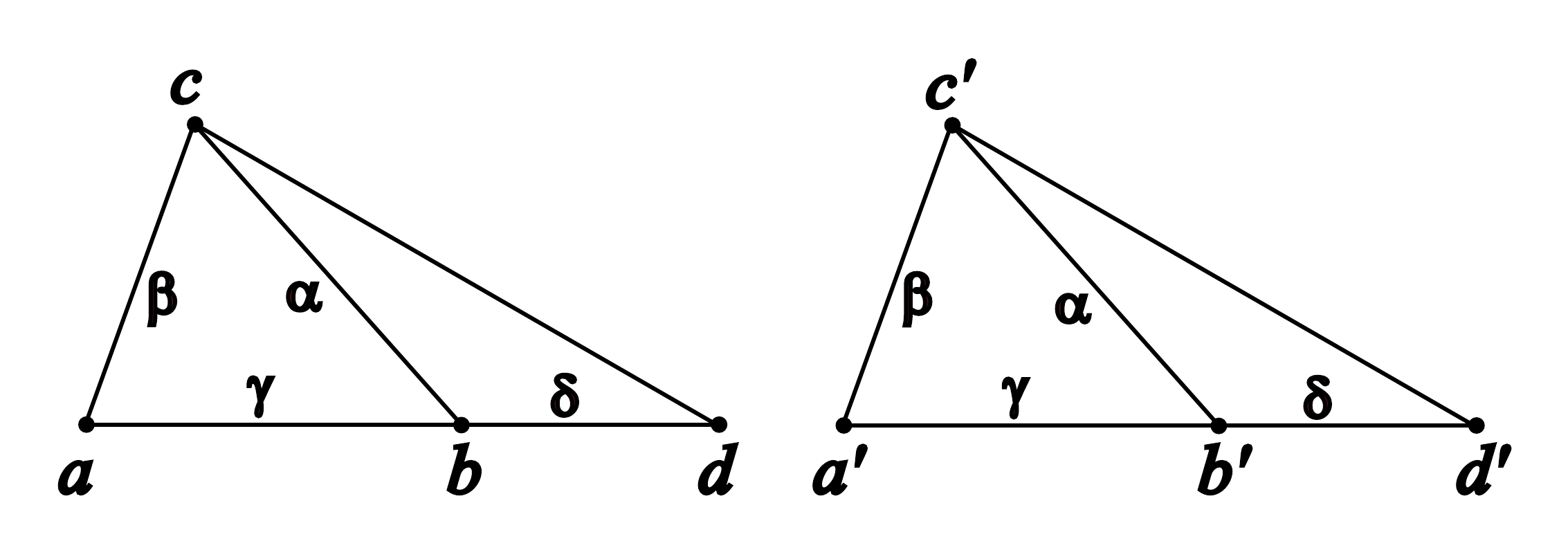}
\par\end{center}

\vspace{-15pt}
\begin{center}
\captionsetup{width=0.35\textwidth}
\captionof{figure}{Axiom of five segments.}
\end{center}
\vspace{-5pt}
\begin{proof}
Let $\alpha=d(\mathbf{bc})=d(\mathbf{b'c'})$, $\beta=d(\mathbf{ac})=d(\mathbf{a'c'})$,
$\gamma=d(\mathbf{ab})=d(\mathbf{a'b'})$, and $\delta=d(\mathbf{bd})=d(\mathbf{b'd'})$.

From $\mathbf{c}$, drop a perpendicular to $\mathbf{ab}$, and let
the foot be $\mathbf{p}$. From $\mathbf{c}'$, drop a perpendicular
to $\mathbf{a'b'}$, and let the foot be $\mathbf{p'}$ (see the figure
below).

Let $h=d(\mathbf{cp})$, $x=d(\mathbf{ap})$, and $y=d(\mathbf{pb})$.

\begin{center}

\includegraphics[scale=0.2]{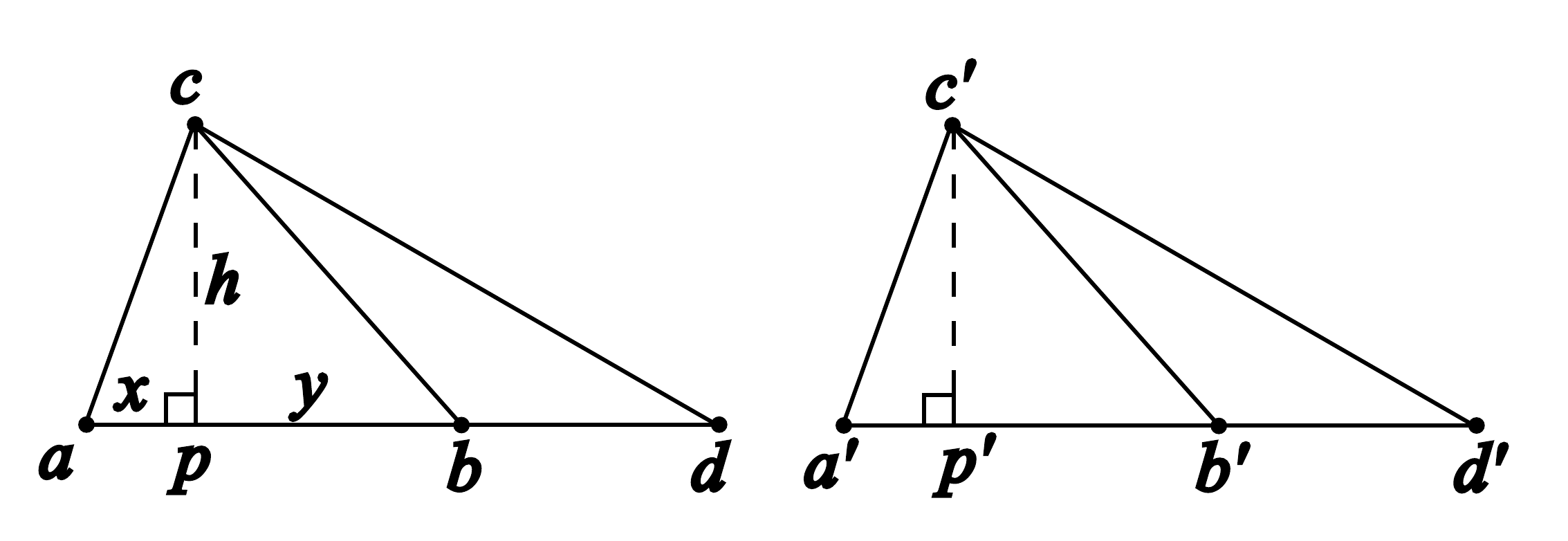}

\end{center}

\noindent Case (1): $B(\mathbf{apb})\wedge B(\mathbf{a'p'b'})$

First we want to show, because the two triangles are congruent, $\triangle\mathbf{abc}\equiv\triangle\mathbf{a'b'c'}$,
their corresponding altitudes are also congruent, namely $d(\mathbf{cp})=d(\mathbf{c'p'})$.

By the Pythagorean theorem,
\begin{align*}
x^{2}+h^{2} & =\beta^{2},\\
y^{2}+h^{2} & =\alpha^{2},\\
x+y & =\gamma.
\end{align*}
Solving these equations, we obtain
\[
h=\frac{1}{2\gamma}\sqrt{2\alpha^{2}\beta^{2}+2\beta^{2}\gamma^{2}+2\gamma^{2}\alpha^{2}-\alpha^{4}-\beta^{4}-\gamma^{4}}.
\]
In this expression, the altitude $h$ is uniquely determined by the
side lengths $\alpha,\beta,\gamma$ of $\triangle\mathbf{abc}$. We
therefore conclude that the congruent triangle $\triangle\mathbf{a'b'c'}$
must have the same altitude $h$.

Now consider the right triangle $\triangle\mathbf{cpd}$. By the Pythagorean
theorem,
\begin{align*}
d(\mathbf{cd}) & =\sqrt{h^{2}+\left(y+\delta\right)^{2}}\\
 & =\sqrt{\alpha^{2}+\delta^{2}+\left(\frac{\delta}{\gamma}\right)\left(\alpha^{2}+\gamma^{2}-\beta^{2}\right)}.
\end{align*}
Note that $d(\mathbf{cd})$ is uniquely determined by $\alpha,\beta,\gamma,\delta$,
the lengths of the four segments that have their corresponding congruent
counterparts in the other configuration with primed points. Therefore,
we must have $d(\mathbf{cd})=d(\mathbf{c'd'})$.\medskip{}

\noindent Case (2): If $\mathbf{p}$ is not between $\mathbf{a}$
and $\mathbf{b}$, the proof is just similar, and we will omit it.
The only difference is, the relationship $x+y=\gamma$ should be replaced
by $y-x=\gamma$ or $x-y=\gamma$. 
\end{proof}
\begin{thm}
\emph{(Axiom of Pasch, inner form, see Appendix \ref{appendix:b})\label{thm:Axiom-of-Pasch}}
\[
\forall\mathbf{a}\forall\mathbf{b}\forall\mathbf{c}\forall\mathbf{p}\forall\mathbf{q}\,\,\,\,[B(\mathbf{bcp})\wedge B(\mathbf{aqb})]\rightarrow\exists\mathbf{t}\,\,[B(\mathbf{ptq})\wedge B(\mathbf{atc})].
\]
\end{thm}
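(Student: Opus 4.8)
The plan is to exhibit the witness $\mathbf{t}$ explicitly --- as the point that divides segment $\mathbf{ac}$ in the ratio dictated by ``solving for the crossing point of the lines $\mathbf{pq}$ and $\mathbf{ac}$'' --- and then to verify the two betweenness conclusions by distance computations that ultimately reduce to identities over the real closed field. First I would dispose of the degenerate configurations: $\mathbf{q}=\mathbf{a}$, $\mathbf{q}=\mathbf{b}$, $\mathbf{p}=\mathbf{c}$, and the case $L(\mathbf{abc})$ (which, by Definition~\ref{def:Between}, also covers every coincidence among $\mathbf{a},\mathbf{b},\mathbf{c}$ and the case $\mathbf{p}=\mathbf{b}$). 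In each of these one of the five given points already works as $\mathbf{t}$: for instance $\mathbf{t}=\mathbf{q}$ when $\mathbf{q}=\mathbf{a}$, $\mathbf{t}=\mathbf{c}$ when $\mathbf{q}=\mathbf{b}$ or $\mathbf{p}=\mathbf{c}$, and when $L(\mathbf{abc})$ holds the claim becomes the statement that the two closed segments $\mathbf{pq}$ and $\mathbf{ac}$, viewed as subintervals of that line, must overlap --- the left end of the overlap always being one of $\mathbf{a},\mathbf{c},\mathbf{p},\mathbf{q}$ --- which a short case split on the order of the points along the line settles using only the order properties of a line that follow from D1--D4 and the RCF axioms.

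In the remaining (main) case $\mathbf{a},\mathbf{b},\mathbf{c}$ form a genuine triangle, $\mathbf{q}$ is strictly between $\mathbf{a}$ and $\mathbf{b}$, and $\mathbf{c}$ is strictly between $\mathbf{b}$ and $\mathbf{p}$ (and then automatically $\mathbf{p}\ne\mathbf{q}$). Put $s=d(\mathbf{aq})/d(\mathbf{ab})\in(0,1)$ and $\lambda=d(\mathbf{bp})/d(\mathbf{bc})>1$, and set $\mu=s\lambda/(\lambda-1+s)$ and $\nu=(\lambda-1)/(\lambda-1+s)$; a short RCF computation shows $\mu,\nu\in(0,1)$. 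Using Axiom D4 and the standard segment-subdivision lemma --- that a segment contains, for every value between $0$ and its length, a unique point at that distance from a prescribed endpoint --- pick $\mathbf{t}$ with $B(\mathbf{ptq})$ and $d(\mathbf{pt})=\nu\,d(\mathbf{pq})$. Thus one of the two required conclusions, $B(\mathbf{ptq})$, holds by construction, and it only remains to show $B(\mathbf{atc})$, that is, $d(\mathbf{at})+d(\mathbf{ct})=d(\mathbf{ac})$.

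For this I would compute $d(\mathbf{at})^{2}$ and $d(\mathbf{ct})^{2}$ in closed form by applying Stewart's cevian relation in the triangles $\mathbf{apq}$ and $\mathbf{cpq}$ (both nondegenerate in the main case), where the ancillary lengths $d(\mathbf{ap}),d(\mathbf{aq}),d(\mathbf{cp}),d(\mathbf{cq}),d(\mathbf{pq})$ are supplied by the law of cosines at the vertex $\mathbf{b}$, using $\angle\mathbf{qbp}\equiv\angle\mathbf{abc}$ (since $\mathbf{q}$ lies on ray $\mathbf{ba}$ and $\mathbf{p}$ on ray $\mathbf{bc}$). Both the law of cosines and Stewart's relation follow from the Pythagorean theorem (Theorem~\ref{thm:(Pythagoras)}) by dropping a perpendicular and applying Pythagoras to the resulting pieces, with a case split on the location of the foot exactly as in the proof of Theorem~\ref{thm:(Triangle-inequality)}. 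After substituting the values of $\mu$ and $\nu$ and simplifying, one checks the two polynomial identities $d(\mathbf{at})^{2}=\mu^{2}d(\mathbf{ac})^{2}$ and $d(\mathbf{ct})^{2}=(1-\mu)^{2}d(\mathbf{ac})^{2}$ over the real closed field; since distances are nonnegative this gives $d(\mathbf{at})=\mu\,d(\mathbf{ac})$ and $d(\mathbf{ct})=(1-\mu)d(\mathbf{ac})$, hence $d(\mathbf{at})+d(\mathbf{ct})=d(\mathbf{ac})$, which is $B(\mathbf{atc})$, completing the proof.

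I expect the hard part to be precisely this last step: honestly carrying out the algebraic simplification that collapses the Stewart/law-of-cosines expressions into the perfect squares $\mu^{2}d(\mathbf{ac})^{2}$ and $(1-\mu)^{2}d(\mathbf{ac})^{2}$, together with the bookkeeping of the degenerate cases and the verification that $\mu,\nu\in(0,1)$ --- routine but long. A secondary task is to record the prerequisite lemmas used along the way (segment subdivision, the law of cosines, Stewart's relation, and uniqueness of the point at prescribed distance along a segment), all of which follow from Axioms D1--D4, Theorem~\ref{thm:(Pythagoras)}, and the RCF axioms. An alternative that avoids the cosine computation, available once a theory of parallels has been set up, is to construct also the point $\mathbf{r}$ on $\mathbf{ab}$ with $d(\mathbf{ar})=\mu\,d(\mathbf{ab})$ and then use Axiom D5 twice --- once in $\triangle\mathbf{abc}$ and once in $\triangle\mathbf{qbp}$, together with Theorem~\ref{thm:Similar-triangle-equal-angles} --- to show that both the point of $\mathbf{ac}$ at parameter $\mu$ from $\mathbf{a}$ and the point $\mathbf{t}$ of $\mathbf{pq}$ at parameter $\nu$ from $\mathbf{p}$ lie on the line through $\mathbf{r}$ parallel to $\mathbf{bc}$ at distance $\mu\,d(\mathbf{bc})$ from $\mathbf{r}$ on the same side, hence coincide; this merely moves the work onto the basic theory of parallel lines.
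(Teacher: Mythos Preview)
Your proposal is correct and follows the same overall strategy as the paper --- explicitly construct the witness $\mathbf{t}$ from the ratio parameters and then verify the betweenness conditions by distance computations that reduce, via Pythagoras, to polynomial identities over the real closed field --- but the organization differs in a way worth noting. The paper drops a perpendicular from $\mathbf{a}$ to line $\mathbf{bp}$, thereby setting up an orthogonal coordinate frame, constructs $\mathbf{t}$ by specifying its horizontal and vertical coordinates $(x,y)$ in that frame (first the foot $\mathbf{t'}$ on $\mathbf{bc}$, then the perpendicular offset), and then verifies \emph{both} $B(\mathbf{ptq})$ and $B(\mathbf{atc})$ by repeated direct applications of the Pythagorean theorem to right triangles in that frame. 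You instead construct $\mathbf{t}$ directly on segment $\mathbf{pq}$ at parameter $\nu$, so that $B(\mathbf{ptq})$ comes for free, and then compute $d(\mathbf{at})$ and $d(\mathbf{ct})$ via the law of cosines and Stewart's relation as packaged lemmas. Your route trades the paper's explicit coordinate bookkeeping for two intermediate lemmas that must themselves be derived from Pythagoras; it halves the number of betweenness verifications at the cost of those auxiliary results. The paper also does not separate out the degenerate configurations as you do, absorbing them into the general formula (its $x,y$ are well-defined whenever $\beta\ne k\alpha$), whereas your explicit case split makes the nondegeneracy hypotheses for Stewart and the law of cosines cleaner to check.
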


\begin{center}
\includegraphics[scale=0.25]{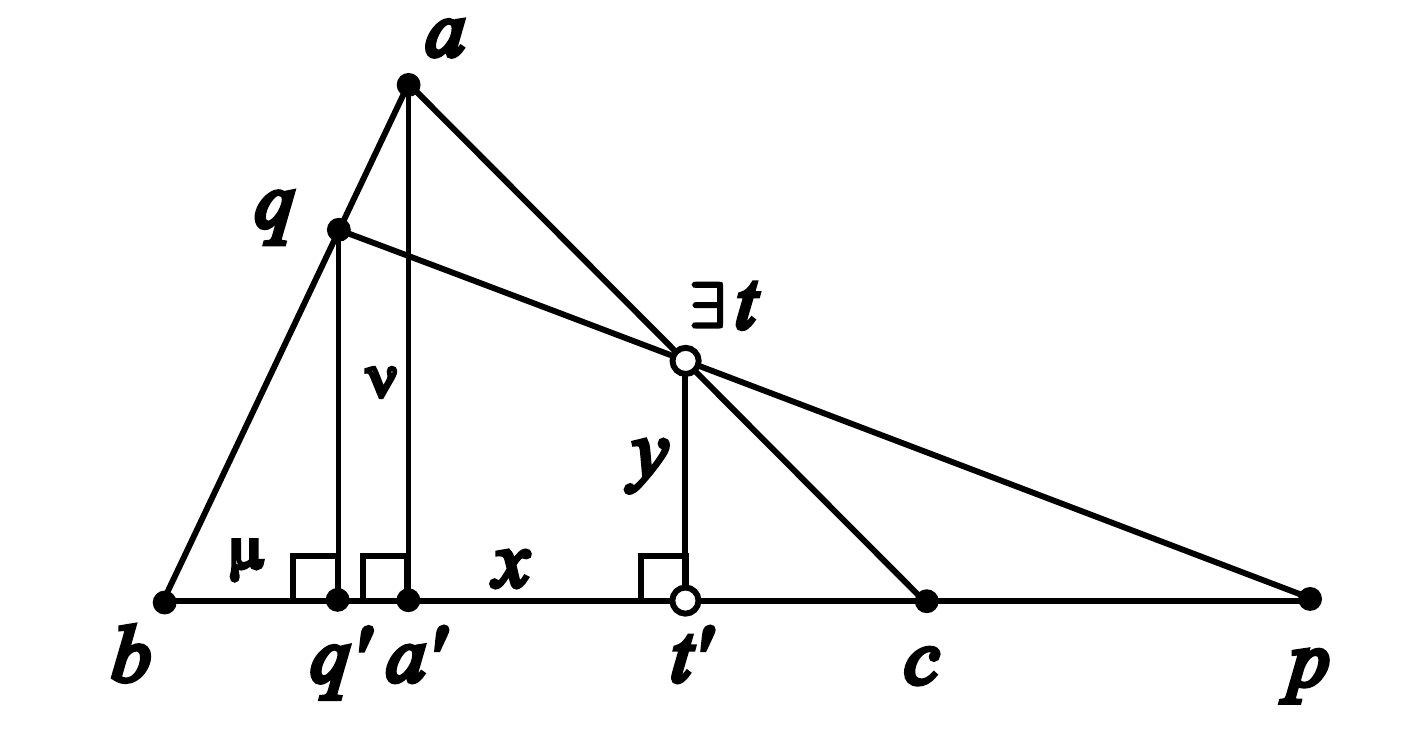}
\par\end{center}

\vspace{-15pt}
\begin{center}
\captionsetup{width=0.38\textwidth}
\captionof{figure}{Axiom of Pasch, inner form.}
\end{center}
\vspace{-5pt}
\begin{proof}
Let $d(\mathbf{bc})=\alpha$, $d(\mathbf{bp})=\beta$, and $d(\mathbf{qb})/d(\mathbf{ab})=k$.

From $\mathbf{a}$, drop a perpendicular to $\mathbf{bp}$. Let the
foot be $\mathbf{a'}$, $d(\mathbf{ba'})=\mu$ and $d(\mathbf{aa'})=\nu$.

\noindent Case (1): $\mathbf{a'}$ is between $\mathbf{a}$ and $\mathbf{c}$.

First, we construct a point $\mathbf{t}$ in two steps.

\noindent Step 1: On line $\mathbf{bc}$, find the point $\mathbf{t'}$
between $\mathbf{b}$ and $\mathbf{c}$ such that $d(\mathbf{bt'})=x$,
where
\[
\begin{aligned}x & =\frac{(1-k)\alpha\beta+k\mu(\beta-\alpha)}{\beta-k\alpha}.\end{aligned}
\]

It is easy to see $x\geqslant0$. By Axiom D4, for any number $x\geqslant0$,
such a point $\mathbf{t'}$ exists.

\noindent Step 2: From $\mathbf{t'}$, draw $\mathbf{t't}$ perpendicular
to $\mathbf{bc}$ on the side of $\mathbf{a}$ such that $d(\mathbf{t't})=y$,
where
\[
\begin{aligned}y & =\frac{k\nu(\beta-\alpha)}{\beta-k\alpha}.\end{aligned}
\]

It is easy to see $y\geqslant0$. By Axiom D4, for any number $y\geqslant0$,
such a point $\mathbf{t}$ exists.

We claim that $\mathbf{t}$ is a point that satisfies $B(\mathbf{ptq})$
and $B(\mathbf{atc})$.

\noindent Verification of $B(\mathbf{ptq})$:

By repeated applications of the Pythagorean theorem to the right triangles
$\triangle\mathbf{qq'b}$, $\triangle\mathbf{aa'b}$, $\triangle\mathbf{qq'p}$,
and $\triangle\mathbf{tt'p}$, we obtain
\begin{align*}
d(\mathbf{pt}) & =\frac{\beta-\alpha}{\beta-k\alpha}\sqrt{(\beta-k\mu)^{2}+k^{2}\nu^{2}},\\
d(\mathbf{tq}) & =\frac{(1-k)\alpha}{\beta-k\alpha}\sqrt{(\beta-k\mu)^{2}+k^{2}\nu^{2}},\\
d(\mathbf{pq}) & =\sqrt{(\beta-k\mu)^{2}+k^{2}\nu^{2}}.
\end{align*}

It is easy to verify,
\[
d(\mathbf{pt})+d(\mathbf{tq})=d(\mathbf{pq}).
\]

Therefore, $B(\mathbf{ptq})$.

\noindent Verification of $B(\mathbf{atc})$:

By repeated applications of the Pythagorean theorem to the right triangles
$\triangle\mathbf{qq'b}$, $\triangle\mathbf{aa'b}$, $\triangle\mathbf{aa'c}$,
and $\triangle\mathbf{tt'c}$, we obtain
\begin{align*}
d(\mathbf{at}) & =\frac{(1-k)\beta}{\beta-k\alpha}\sqrt{(\alpha-\mu)^{2}+\nu^{2}},\\
d(\mathbf{tc}) & =\frac{k(\beta-\alpha)}{\beta-k\alpha}\sqrt{(\alpha-\mu)^{2}+\nu^{2}},\\
d(\mathbf{ac}) & =\sqrt{(\alpha-\mu)^{2}+\nu^{2}}.
\end{align*}

It is easy to verify,
\[
d(\mathbf{at})+d(\mathbf{tc})=d(\mathbf{ac}).
\]

Therefore, $B(\mathbf{atc})$.

\noindent Case (2): $\mathbf{a'}$ is not between $\mathbf{a}$ and
$\mathbf{c}$.

The proof is similar to Case (1), and we will omit it.
\end{proof}

\section{Analytic Geometry United under Synthetic Geometry in $\mathscr{E}_{d}$}

The mere introduction of a distance function symbol into the language
does not by itself render $\mathscr{E}_{d}$ as analytic geometry.
Traditionally, synthetic geometry is based on axioms while analytic
geometry is based on algebra. In traditional analytic geometry, a
point is \emph{represented} by a pair of real numbers $(x,y)$. This
is effectively saying that a point is \emph{defined to be} a pair
of real numbers $(x,y)$. For two points $(x_{1},y_{1})$ and $(x_{2},y_{2})$,
the distance between them is \emph{defined to be} $d=\sqrt{(x_{2}-x_{1})^{2}+(y_{2}-y_{1})^{2}}$.
In this sense, analytic geometry is treated as a model of synthetic
geometry. In contrast, the distance function symbol $d$ in $\mathscr{E}_{d}$
is an undefined primitive notion and is governed by the axioms, rather
than being defined. That is why $\mathscr{E}_{d}$ remains synthetic
geometry. What $\mathscr{E}_{d}$ makes possible, however, is effectively
framing analytic geometry within the formal first-order language of
$\mathscr{E}_{d}$, thereby having synthetic and analytic geometry
unified within $\mathscr{E}_{d}$.

Consider the following example. In traditional analytic geometry, we say $\mathbf{p}$ has coordinates
$(p_{x},p_{y})$ and $\mathbf{q}$ has coordinates $(q_{x},q_{y})$.
Suppose $\mathbf{u}$ lies on the line passing through distinct points $\mathbf{p}$
and $\mathbf{q}$. If $\mathbf{u}$ has coordinates $(x,y)$, then
$x$ and $y$ satisfy the following equation (Figure \ref{fig:Analytic-geometry}):
\begin{equation}
y-q_{y}=\frac{p_{y}-q_{y}}{p_{x}-q_{x}}\left(x-q_{x}\right).\label{eq:Line-equation}
\end{equation}

\begin{center}
\includegraphics[scale=0.3]{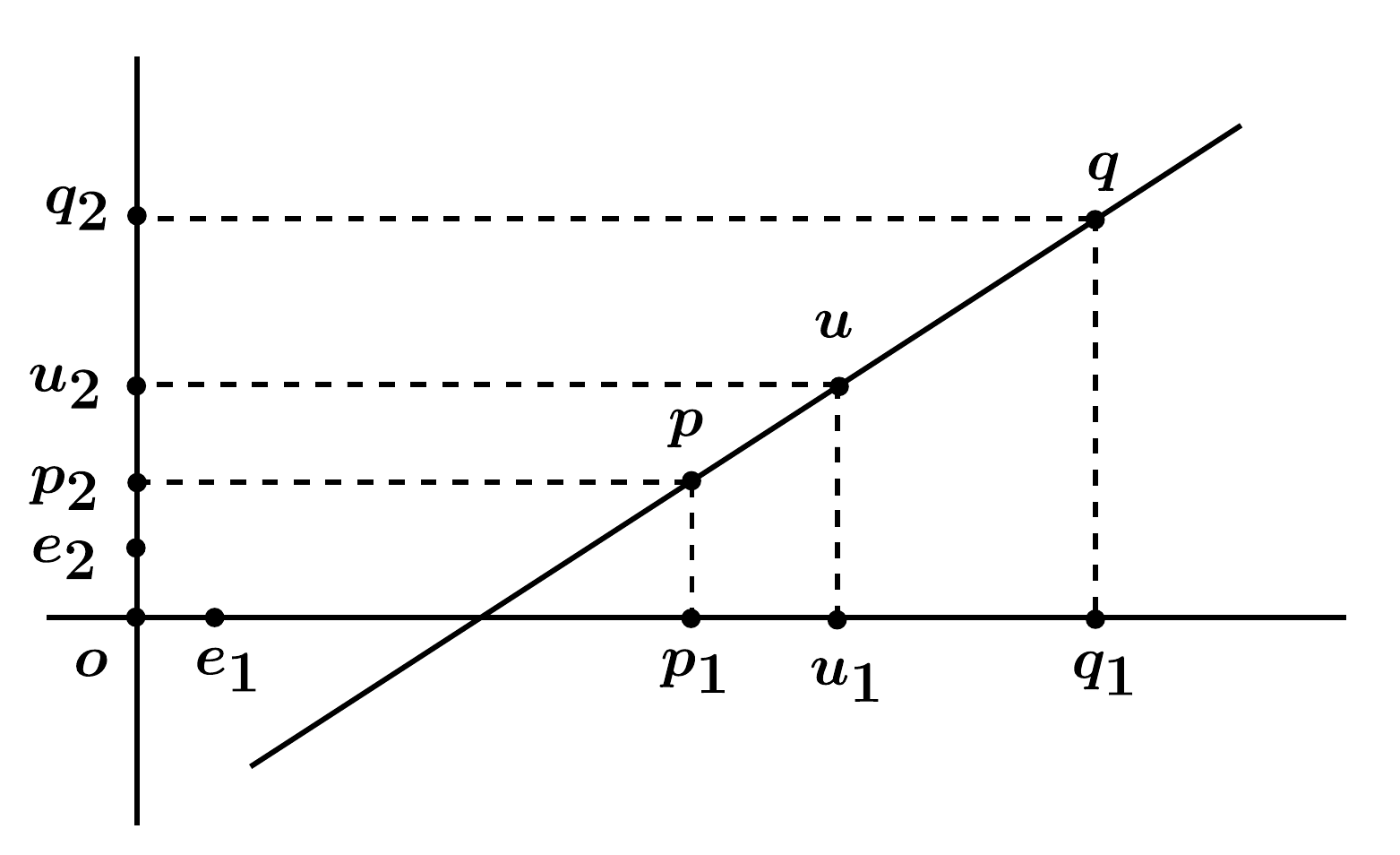}
\par\end{center}

\vspace{-10pt}
\begin{center}
\captionsetup{width=0.7\textwidth}
\captionof{figure}{Analytic geometry is united under synthetic geometry in $\mathscr{E}_d$.} \label{fig:Analytic-geometry}
\end{center}
\vspace{-5pt}

The coordinates $(x,y)$ are related to distances. The precise
relation depends on whether they are positive, where
\[
\left(B(\mathbf{o}\mathbf{e}_{1}\mathbf{u}_{1})\vee B(\mathbf{o}\mathbf{u}_{1}\mathbf{e}_{1})\right)\rightarrow x=d(\mathbf{ou}_{1}),
\]
or negative, where
\[
B(\mathbf{u}_{1}\mathbf{o}\mathbf{e}_{1})\rightarrow x=-d(\mathbf{ou}_{1}).
\]
The same applies to all other points as well.

In analytic geometry, one may say informally, ``Equation (\ref{eq:Line-equation})
represents the line (or\emph{ }it is the equation of the line) passing
through distinct points $\mathbf{p}$ and $\mathbf{q}$''. We are now
ready to express this statement as a formal sentence in $\mathscr{E}_{d}$:
\begin{align*}
 & \forall\mathbf{o}\forall\mathbf{e}_{1}\forall\mathbf{e}_{2}\forall\mathbf{u}\forall\mathbf{u}_{1}\forall\mathbf{u}_{2}\forall\mathbf{p}\forall\mathbf{p}_{1}\forall\mathbf{p}_{2}\forall\mathbf{q}\forall\mathbf{q}_{1}\forall\mathbf{q}_{2}\forall x\forall y\forall p_{x}\forall p_{y}\forall q_{x}\forall q_{y}\\
 & \mathbf{p}\neq\mathbf{q}\wedge R(\mathbf{e}_{1}\mathbf{o}\mathbf{e}_{2})\wedge L(\mathbf{o}\mathbf{e}_{1}\mathbf{p}_{1}\mathbf{q}_{1}\mathbf{u}_{1})\wedge L(\mathbf{o}\mathbf{e}_{2}\mathbf{p}_{2}\mathbf{q}_{2}\mathbf{u}_{2})\\
 & \wedge R(\mathbf{pp_{1}\mathbf{o}})\wedge R(\mathbf{pp_{2}\mathbf{o}})\wedge R(\mathbf{qq_{1}\mathbf{o}})\wedge R(\mathbf{qq_{2}\mathbf{o}})\wedge R(\mathbf{uu_{1}\mathbf{o}})\wedge R(\mathbf{uu_{2}\mathbf{o}})\\
 & \wedge\left[\left(B(\mathbf{o}\mathbf{e}_{1}\mathbf{u}_{1})\vee B(\mathbf{o}\mathbf{u}_{1}\mathbf{e}_{1})\right)\rightarrow x=d(\mathbf{ou}_{1})\right]\wedge\left[B(\mathbf{u}_{1}\mathbf{o}\mathbf{e}_{1})\rightarrow x=-d(\mathbf{ou}_{1})\right]\\
 & \wedge\left[\left(B(\mathbf{o}\mathbf{e}_{2}\mathbf{u}_{2})\vee B(\mathbf{o}\mathbf{u}_{2}\mathbf{e}_{2})\right)\rightarrow y=d(\mathbf{ou}_{2})\right]\wedge\left[B(\mathbf{u}_{2}\mathbf{o}\mathbf{e}_{2})\rightarrow y=-d(\mathbf{ou}_{2})\right]\\
 & \wedge\cdots\cdots\\
 & \rightarrow\left[L(\mathbf{upq})\leftrightarrow y-q_{y}=\frac{p_{y}-q_{y}}{p_{x}-q_{x}}\left(x-q_{x}\right)\right],
\end{align*}
where $L(\mathbf{o}\mathbf{e}_{1}\mathbf{p}_{1}\mathbf{q}_{1}\mathbf{u}_{1})$
is the abbreviation for that all these points being
collinear, and the dots indicate analogous conditions for $\mathbf{p}_{1},\mathbf{p}_{2}$
and $\mathbf{q}_{1},\mathbf{q}_{2}$.

\section{Theory $\mathscr{E}_{da}$---A Conservative Extension
of $\mathscr{E}_{d}$ with Angle Function \boldmath{$a$}}

The language of $\mathscr{E}_{d}$ has a single geometric primitive
function $d$, the distance function, apart from the operations for
numbers. We can extend theory $\mathscr{E}_{d}$ further to a theory
$\mathscr{E}_{da}$ to include the angle function $a$, which denotes
the numerical measure of an angle.

The language of $\mathscr{E}_{da}$ is
\[
L(\mathscr{E}_{da})=L(d,a;\,=,+,\cdot,<,0,1),
\]
where $a$ is a 3-place function, with $a(\mathbf{pvq})$ being the
number to represent the measure of angle $\angle\mathbf{pvq}$ formed
by points $\mathbf{p},\mathbf{v}$, and $\mathbf{q}$, with $\mathbf{v}$
as the vertex.

The system $\mathscr{E}_{da}$ must satisfy axioms RCF1 through RCF17
(Appendix \ref{appendix:a}), Axioms D1 through D7, plus A1 through
A4, which shall be listed in this section.

First, we explain what we mean by $\mathscr{E}_{da}$ being a conservative
extension of $\mathscr{E}_{d}$. 

Let $\Sigma$ be a theory in the language $L(\Sigma)$, $\Delta$
a theory in the language $L(\Delta)$, and $L(\Sigma)\subseteq L(\Delta)$.

$\Delta$ is called an extension of $\Sigma$ if $\Sigma\subseteq\Delta$.

$\Delta$ is called a conservative extension of $\Sigma$ if $\Delta$
is an extension of $\Sigma$, and for each $A$ in $L(\Sigma)$, $\Sigma\vdash A$
if and only if $\Delta\vdash A$, meaning the only new theorems in
$\Delta$ are those that use symbols in $L(\Delta)$ that are not
in $L(\Sigma)$ (see Epstein \cite{Epstein}).

In the following, we list the axioms of $\mathscr{E}_{da}$.

\noindent\rule[0.5ex]{1\columnwidth}{1pt}

\noindent$\mathscr{E}_{da}$\textbf{---}\textbf{\emph{The Theory
of Plane Quantitative Euclidean Geometry with Distance and Angle Functions
in the Language $L(d,a;\,=,+,\cdot,<,0,1)$}}

\noindent\rule[0.5ex]{1\columnwidth}{1pt}\medskip{}

\noindent\textbf{Axioms RCF1 through RCF17}\medskip{}

\noindent\textbf{Axioms D1 through D7}\medskip{}

\noindent\textbf{Axiom A1}. Nonnegativeness of angle measure
\[
\forall\mathbf{p}\forall\mathbf{v}\forall\mathbf{q}\,\,\,0\leqslant a(\mathbf{pvq}).
\]

\noindent\textbf{Axiom A2}. Congruent angles
\[
\forall\mathbf{p}\forall\mathbf{v}\forall\mathbf{q}\forall\mathbf{p'}\forall\mathbf{v'}\forall\mathbf{q'}\,\,\,[\angle\mathbf{pvq}\equiv\angle\mathbf{p'v'q'}\leftrightarrow a(\mathbf{pvq})=a(\mathbf{p'v'q'})].
\]

See Definition \ref{def:Angle-congruence} in Section \ref{sec:Theory-Ed}
for the abbreviation $\angle\mathbf{pvq}\equiv\angle\mathbf{p'v'q'}$.

\medskip{}

Informal explanation in plain English: Congruent angles have equal
angle measures and vice versa.

\medskip{}

\noindent\textbf{Axiom A3}. Addition of angles
\[
\forall\mathbf{p}\forall\mathbf{v}\forall\mathbf{q}\forall\mathbf{t}\,\,\,\angle\mathbf{pvt}\tilde{+}\angle\mathbf{tvq}\equiv\mathbf{pvq}\leftrightarrow a(\mathbf{pvt})+a(\mathbf{tvq})=a(\mathbf{pvq}).
\]

\begin{center}
\includegraphics[scale=0.22]{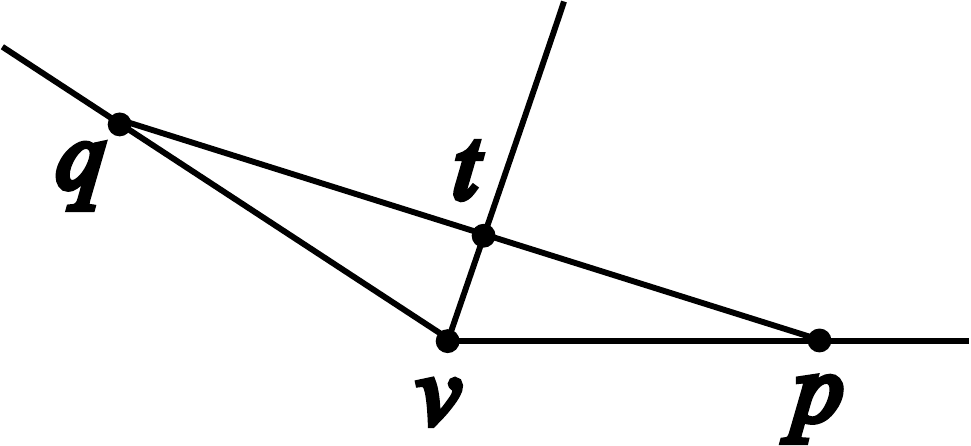}~~~~~~~~\includegraphics[scale=0.22]{Fig09-Def-Angle_addition2}
\par\end{center}

\vspace{-12pt}
\begin{center}
\captionsetup{width=0.32\textwidth}
\captionof{figure}{Addition of angles.} \label{axm:Addition-angles}
\end{center}
\vspace{-5pt}

See Definition \ref{def:Sum-angles} in Section \ref{sec:Theory-Ed}
for the abbreviation $\angle\mathbf{pvt}\tilde{+}\angle\mathbf{tvq}\equiv\mathbf{pvq}$.
\medskip{}

\noindent\textbf{Axiom A4}. Straight angles
\[
\forall\mathbf{p}\forall\mathbf{v}\forall\mathbf{q}\,\,\,[B(\mathbf{pvq})\wedge\mathbf{p}\neq\mathbf{v}\wedge\mathbf{q}\neq\mathbf{v}]\leftrightarrow a(\mathbf{pvq})=180,
\]

\noindent where 180 is the abbreviation for $1+1+\cdots+1$ (one hundred
eighty times), and $1$ is the multiplicative identity, which is a
constant symbol in the system of RCF (Appendix \ref{appendix:a}).\medskip{}

By Axioms A3 and A4, it is easy to see that the maximum value of any
angle measure is 180.
\begin{thm}
$\forall\mathbf{p}\forall\mathbf{v}\forall\mathbf{q}\,\,\,a(\mathbf{pvq})=a(\mathbf{qvp}).$
\end{thm}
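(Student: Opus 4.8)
The plan is to reduce the identity, via Axiom A2, to the purely geometric congruence $\angle\mathbf{pvq}\equiv\angle\mathbf{qvp}$, and then to exhibit witnesses for the two existential quantifiers in Definition~\ref{def:Angle-congruence}. First I would dispose of the degenerate cases: if $\mathbf{p}=\mathbf{v}$, or $\mathbf{q}=\mathbf{v}$, or $\mathbf{p}=\mathbf{q}$, the witnesses can be written down by hand (e.g.\ when $\mathbf{p}=\mathbf{v}$ take $\mathbf{p''}=\mathbf{v}$ and $\mathbf{q''}=\mathbf{q}$, the required betweenness disjuncts then reducing to instances of $B(\mathbf{vvq})$, which holds because $d(\mathbf{vv})=0$). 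So assume $\mathbf{p},\mathbf{v},\mathbf{q}$ are pairwise distinct; since interchanging the names $\mathbf{p}$ and $\mathbf{q}$ leaves the statement to be proved unchanged, I may also assume $d(\mathbf{vq})\leqslant d(\mathbf{vp})$.

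Next, using Axiom D4 (to extend $\mathbf{vq}$ beyond $\mathbf{q}$) and the segment-cut construction already invoked in the proofs of Theorems~\ref{thm:Similar-triangle-equal-angles} and~\ref{thm:Triangle-similarity-criterion-AA} (to lay off a sub-segment of $\mathbf{vp}$), I would pick a point $\mathbf{p''}$ with $d(\mathbf{vp''})=d(\mathbf{vp})$ and $B(\mathbf{vqp''})$ (possible since $d(\mathbf{vq})\leqslant d(\mathbf{vp''})$), and a point $\mathbf{q''}$ with $d(\mathbf{vq''})=d(\mathbf{vq})$ and $B(\mathbf{vq''p})$ (possible since $d(\mathbf{vq})\leqslant d(\mathbf{vp})$). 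By construction $d(\mathbf{p''v})=d(\mathbf{pv})$ and $d(\mathbf{vq''})=d(\mathbf{vq})$, so by Definition~\ref{def:TriAngle-congruence} the only thing still needed to conclude $\triangle\mathbf{p''vq''}\equiv\triangle\mathbf{pvq}$ is the third-side equality $d(\mathbf{p''q''})=d(\mathbf{pq})$.

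The key step is that third-side equality, and I would obtain it from the already-proven Axiom of Five Segments (Theorem~\ref{thm:Five-segments}) rather than by a fresh Pythagorean computation. Apply that theorem to the two four-point configurations $(\mathbf{a},\mathbf{b},\mathbf{c},\mathbf{d})=(\mathbf{v},\mathbf{q''},\mathbf{q},\mathbf{p})$ and $(\mathbf{a'},\mathbf{b'},\mathbf{c'},\mathbf{d'})=(\mathbf{v},\mathbf{q},\mathbf{q''},\mathbf{p''})$. Its hypotheses all hold: $\mathbf{v}\neq\mathbf{q''}$ because $d(\mathbf{vq''})=d(\mathbf{vq})>0$; the betweenness hypotheses $B(\mathbf{vq''p})$ and $B(\mathbf{vqp''})$ are two of the facts just constructed; $d(\mathbf{vq})=d(\mathbf{vq''})$ holds twice over; $d(\mathbf{q''q})=d(\mathbf{qq''})$ by Axiom D3; and $d(\mathbf{q''p})=d(\mathbf{qp''})$ because, unfolding $B(\mathbf{vq''p})$ and $B(\mathbf{vqp''})$ via Definition~\ref{def:Between}, both sides equal $d(\mathbf{vp})-d(\mathbf{vq})$. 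The conclusion of Theorem~\ref{thm:Five-segments} is then exactly $d(\mathbf{qp})=d(\mathbf{q''p''})$, i.e.\ $d(\mathbf{pq})=d(\mathbf{p''q''})$. (When $d(\mathbf{vp})=d(\mathbf{vq})$ one has $\mathbf{p''}=\mathbf{q}$ and $\mathbf{q''}=\mathbf{p}$, so this is trivial; the sub-case in which $\mathbf{p},\mathbf{v},\mathbf{q}$ happen to be collinear is likewise immediate from the definition of $B$.)

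Finally I would assemble the pieces. By Definition~\ref{def:TriAngle-congruence} we now have $\triangle\mathbf{p''vq''}\equiv\triangle\mathbf{pvq}$; and, reading Definition~\ref{def:Angle-congruence} for $\angle\mathbf{pvq}\equiv\angle\mathbf{p'v'q'}$ with $(\mathbf{p'},\mathbf{v'},\mathbf{q'})=(\mathbf{q},\mathbf{v},\mathbf{p})$, the required disjunctions $B(\mathbf{vp''q})\vee B(\mathbf{vqp''})$ and $B(\mathbf{vq''p})\vee B(\mathbf{vpq''})$ are witnessed by $B(\mathbf{vqp''})$ and $B(\mathbf{vq''p})$ respectively. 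Hence $\angle\mathbf{pvq}\equiv\angle\mathbf{qvp}$, and Axiom A2 yields $a(\mathbf{pvq})=a(\mathbf{qvp})$. I expect the only non-bookkeeping point to be the recognition that Theorem~\ref{thm:Five-segments} already delivers the third-side equality $d(\mathbf{p''q''})=d(\mathbf{pq})$; absent that observation one is pushed into a case-laden argument in the style of the proofs of Theorems~\ref{thm:(Pythagoras)} and~\ref{thm:Axiom-of-Pasch}, dropping perpendiculars and tracking the sign of the coordinate of each foot.
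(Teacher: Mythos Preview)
Your proof is correct and shares the paper's overall strategy: reduce $a(\mathbf{pvq})=a(\mathbf{qvp})$ via Axiom~A2 to the congruence $\angle\mathbf{pvq}\equiv\angle\mathbf{qvp}$. The difference is one of depth. The paper's proof is a single sentence---``by definition, the angles $\angle\mathbf{pvq}$ and $\angle\mathbf{qvp}$ are congruent''---treating that congruence as self-evident from the symmetry of~$d$. You instead take Definition~\ref{def:Angle-congruence} at face value and actually produce the existential witnesses $\mathbf{p''},\mathbf{q''}$ it demands, handling degenerate cases separately and, in the generic case, invoking Theorem~\ref{thm:Five-segments} (Five Segments) to secure the third-side equality $d(\mathbf{p''q''})=d(\mathbf{pq})$. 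What you gain is rigor: the paper's assertion is not literally immediate from the formal definition (the na\"ive choices $\mathbf{p''}=\mathbf{p},\mathbf{q''}=\mathbf{q}$ or $\mathbf{p''}=\mathbf{q},\mathbf{q''}=\mathbf{p}$ fail the betweenness or side-length clauses), so your argument fills a genuine gap. What the paper gains is brevity, at the cost of leaving the reader to reconstruct exactly the argument you wrote out. Your recognition that the Five~Segments theorem handles the third side cleanly, avoiding a perpendicular-dropping case analysis, is the right move.
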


Informally, this means the angles are undirected. As we can see, there
is no need to list this as an axiom. The symmetry of the angle measure
is a consequence of the symmetry of distance, $d(\mathbf{pq})=d(\mathbf{qp})$.
\begin{proof}
This is straightforward, because by definition, the angles $\angle\mathbf{pvq}$
and $\angle\mathbf{qvp}$ are congruent.
\end{proof}
\begin{MyLemma}\label{lem:Lemma1} $\forall\mathbf{v}\forall\mathbf{s}\,\,\,a(\mathbf{svs})=0.$

\end{MyLemma}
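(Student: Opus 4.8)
The plan is to read the identity off Axiom A3 by ``adding the degenerate angle $\angle\mathbf{svs}$ to itself''. I would first dispose of the generic case $\mathbf{s}\neq\mathbf{v}$. Since $\mathbf{s}\neq\mathbf{v}$, Axioms D1 and D2 give $d(\mathbf{sv})>0$, so $d(\mathbf{ss})=0\neq d(\mathbf{sv})+d(\mathbf{vs})$, i.e.\ $\neg B(\mathbf{svs})$; hence the abbreviation $\angle\mathbf{svs}\equiv\angle\mathbf{svs}\,\tilde{+}\,\angle\mathbf{svs}$ is the one furnished by Case~1 (the non-supplementary case) of Definition~\ref{def:Sum-angles}. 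Unwinding that case with $\mathbf{p}=\mathbf{q}=\mathbf{t}=\mathbf{s}$, its body is $\neg B(\mathbf{svs})\wedge\exists\mathbf{a}\exists\mathbf{b}\,[\,B(\mathbf{asb})\wedge(B(\mathbf{vas})\vee B(\mathbf{vsa}))\wedge(B(\mathbf{vbs})\vee B(\mathbf{vsb}))\,]$, and this is witnessed by $\mathbf{a}=\mathbf{b}=\mathbf{s}$: every betweenness clause then collapses to a trivially true statement of the form $d(\mathbf{xy})=d(\mathbf{xy})+0$. So $\angle\mathbf{svs}\equiv\angle\mathbf{svs}\,\tilde{+}\,\angle\mathbf{svs}$ holds, and Axiom A3 instantiated with $\mathbf{p}=\mathbf{q}=\mathbf{t}=\mathbf{s}$ yields $a(\mathbf{svs})+a(\mathbf{svs})=a(\mathbf{svs})$; cancelling $a(\mathbf{svs})$ in RCF gives $a(\mathbf{svs})=0$.

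It remains to treat the case $\mathbf{s}=\mathbf{v}$, where the angle-addition abbreviation of Definition~\ref{def:Sum-angles} is not applicable. Here I would use Axiom D6 to obtain a point $\mathbf{s}_{0}$ with $\mathbf{s}_{0}\neq\mathbf{v}$ (D6 supplies two distinct points, at least one of which differs from $\mathbf{v}$), and then verify $\angle\mathbf{vvv}\equiv\angle\mathbf{s}_{0}\mathbf{v}\mathbf{s}_{0}$ directly from Definition~\ref{def:Angle-congruence}: the docked triangle $\triangle\mathbf{p''vq''}\equiv\triangle\mathbf{vvv}$ forces $d(\mathbf{p''v})=d(\mathbf{vq''})=0$, hence $\mathbf{p''}=\mathbf{q''}=\mathbf{v}$ by Axiom D2, after which the ray conditions on $\mathbf{p''}$ and $\mathbf{q''}$ reduce to $B(\mathbf{vvs}_{0})$, which holds trivially. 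Axiom A2 then converts this congruence into $a(\mathbf{vvv})=a(\mathbf{s}_{0}\mathbf{v}\mathbf{s}_{0})$, and the right-hand side is $0$ by the previous paragraph. This covers all $\mathbf{v}$ and $\mathbf{s}$.

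The routine core (the RCF cancellation and the appeals to A2 and A3) is immediate; the real work is the bookkeeping in the two boundary situations. The step I expect to be the main obstacle is checking that the self-addition abbreviation $\angle\mathbf{svs}\equiv\angle\mathbf{svs}\,\tilde{+}\,\angle\mathbf{svs}$ genuinely falls under, and is satisfiable within, Case~1 of Definition~\ref{def:Sum-angles} when $\mathbf{s}\neq\mathbf{v}$, and, separately, reaching the collapsed case $\mathbf{s}=\mathbf{v}$ through the angle-congruence route of Axiom A2 rather than through angle addition. Once those are in hand, the rest is a direct expansion of Definitions~\ref{def:Between}, \ref{def:TriAngle-congruence}, \ref{def:Angle-congruence}, and \ref{def:Sum-angles}, together with Axioms D1, D2, A2, and A3.
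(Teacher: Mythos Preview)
Your argument is correct and its core is exactly the paper's: instantiate Axiom~A3 with $\mathbf{p}=\mathbf{q}=\mathbf{t}=\mathbf{s}$ to obtain $a(\mathbf{svs})=a(\mathbf{svs})+a(\mathbf{svs})$ and cancel. The paper's proof stops there and does not separately treat the degenerate case $\mathbf{s}=\mathbf{v}$; your explicit handling of that boundary case via Definition~\ref{def:Angle-congruence} and Axiom~A2 is additional rigor beyond what the paper provides, but the underlying strategy is the same.
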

\begin{proof}
By Axiom A3, if $\neg B(\mathbf{pvq})\wedge B(\mathbf{ptq})$, then
$a(\mathbf{pvq})=a(\mathbf{pvt})+a(\mathbf{tvq})$. Now consider a
special case where $\mathbf{p}=\mathbf{s}$, $\mathbf{t}=\mathbf{s}$,
and $\mathbf{q}=\mathbf{s}$. We have
\[
a(\mathbf{svs})=a(\mathbf{svs})+a(\mathbf{svs}).
\]
Therefore, $a(\mathbf{svs})=0$.
\end{proof}
\begin{thm}
\label{thm:Angle-same-ray}$\forall\mathbf{p}\forall\mathbf{v}\forall\mathbf{q}\,\,\,B(\mathbf{vpq})\rightarrow a(\mathbf{pvq})=0.$
\end{thm}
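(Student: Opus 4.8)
The plan is to identify the angle $\angle\mathbf{pvq}$ with the degenerate angle $\angle\mathbf{pvp}$ and then invoke Lemma~\ref{lem:Lemma1}. Geometrically the hypothesis $B(\mathbf{vpq})$ says that $\mathbf{p}$ and $\mathbf{q}$ lie on the same ray issuing from $\mathbf{v}$, so $\angle\mathbf{pvq}$ ``is'' a zero angle; the work is just to verify that Definition~\ref{def:Angle-congruence} records this.

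First I would prove $\angle\mathbf{pvq}\equiv\angle\mathbf{pvp}$ by applying Definition~\ref{def:Angle-congruence} with the explicit witnesses $\mathbf{p}'':=\mathbf{p}$ and $\mathbf{q}'':=\mathbf{q}$. With these choices the triangle-congruence conjunct becomes $\triangle\mathbf{pvq}\equiv\triangle\mathbf{pvq}$, which is immediate from Definition~\ref{def:TriAngle-congruence} (each side is congruent to itself). For the first betweenness disjunct one uses $B(\mathbf{vpp})$, which unwinds via Definition~\ref{def:Between} to $d(\mathbf{vp})=d(\mathbf{vp})+d(\mathbf{pp})$ and holds because $d(\mathbf{pp})=0$ by Axiom~D2. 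For the second betweenness disjunct the key observation is that the target angle $\angle\mathbf{pvp}$ is degenerate, so the slot occupied by ``$\mathbf{q}'$'' in the definition is $\mathbf{p}$; the disjunct we need is therefore $B(\mathbf{vpq})$, which is exactly the hypothesis.

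Having established the congruence, I would conclude by two one-line appeals: Axiom~A2 converts $\angle\mathbf{pvq}\equiv\angle\mathbf{pvp}$ into the numerical equality $a(\mathbf{pvq})=a(\mathbf{pvp})$, and Lemma~\ref{lem:Lemma1} (taking $\mathbf{s}:=\mathbf{p}$) gives $a(\mathbf{pvp})=0$; hence $a(\mathbf{pvq})=0$.

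I do not anticipate a genuine obstacle; the only delicate point is the bookkeeping inside Definition~\ref{def:Angle-congruence}---in particular matching the roles $\mathbf{p}'\leftrightarrow\mathbf{p}$, $\mathbf{v}'\leftrightarrow\mathbf{v}$, $\mathbf{q}'\leftrightarrow\mathbf{p}$ correctly for the degenerate second angle---so that the right disjuncts are selected. It is also worth remarking that nothing in the argument uses distinctness of $\mathbf{p},\mathbf{v},\mathbf{q}$, so the configurations in which some of these points coincide are subsumed by the same reasoning, again with Lemma~\ref{lem:Lemma1} supplying the value $0$.
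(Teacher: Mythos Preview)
Your proposal is correct and follows exactly the same approach as the paper's proof: establish $\angle\mathbf{pvq}\equiv\angle\mathbf{pvp}$ from Definition~\ref{def:Angle-congruence}, then apply Axiom~A2 and Lemma~\ref{lem:Lemma1}. The paper simply writes ``by definition'' for the congruence step, whereas you have spelled out the witnesses $\mathbf{p}'':=\mathbf{p}$, $\mathbf{q}'':=\mathbf{q}$ and verified the betweenness disjuncts explicitly; this is a helpful elaboration but not a different argument.
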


Informally, if $\mathbf{p}$ and $\mathbf{q}$ are on the same ``ray'',
then the measure of $\angle\mathbf{pvq}$ is zero. This shows there
is no need to list this as an axiom.
\begin{proof}
Because $B(\mathbf{vpq})$, by definition, $\angle\mathbf{pvq}\equiv\angle\mathbf{pvp}$.
By Axiom A2 and Lemma \ref{lem:Lemma1}, $a(\mathbf{pvq})=a(\mathbf{pvp})=0$.
\end{proof}

Next, we shall prove the theorem of vertical angles in both $\mathscr{E}_{da}$
and $\mathscr{E}_{d}$ separately.
\begin{thm}
\emph{\label{thm:Vertical-angles}(Vertical angles) $\forall\mathbf{a}\forall\mathbf{b}\forall\mathbf{c}\forall\mathbf{b'}\forall\mathbf{c'}\,\,\,B(\mathbf{bab'})\wedge B(\mathbf{cac'})\rightarrow\angle\mathbf{bac}\equiv\angle\mathbf{b'ac'}.$}
\end{thm}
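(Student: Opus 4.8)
The plan is to give the two proofs separately, as announced. Throughout I treat the angles as nondegenerate, i.e.\ $\mathbf{a}$ distinct from each of $\mathbf{b},\mathbf{b'},\mathbf{c},\mathbf{c'}$; since the betweenness here is nonstrict, the degenerate cases are read off directly from Definition~\ref{def:Angle-congruence} (and, in $\mathscr{E}_{da}$, from Lemma~\ref{lem:Lemma1} and Theorem~\ref{thm:Angle-same-ray}).

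\emph{Proof in $\mathscr{E}_{da}$.} The idea is to convert the claim into two arithmetic ``supplementary-angle'' identities and subtract. From $B(\mathbf{bab'})$ with $\mathbf{b},\mathbf{b'},\mathbf{c}$ all $\neq\mathbf{a}$, Definition~\ref{def:Sum-angles} (Case~2) gives $\angle\mathbf{bab'}\equiv\angle\mathbf{bac}\,\tilde{+}\,\angle\mathbf{cab'}$; then Axiom~A3 yields $a(\mathbf{bac})+a(\mathbf{cab'})=a(\mathbf{bab'})$ and Axiom~A4 gives $a(\mathbf{bab'})=180$, so
\[
a(\mathbf{bac})+a(\mathbf{cab'})=180 .
\]
The same argument applied to $B(\mathbf{cac'})$ with the splitting ray through $\mathbf{b'}$ gives $a(\mathbf{cab'})+a(\mathbf{b'ac'})=180$. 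Subtracting these two equations in RCF cancels $a(\mathbf{cab'})$ and leaves $a(\mathbf{bac})=a(\mathbf{b'ac'})$, whence $\angle\mathbf{bac}\equiv\angle\mathbf{b'ac'}$ by Axiom~A2.

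\emph{Proof in $\mathscr{E}_{d}$.} Here the plan is to construct explicitly a triangle that witnesses the docking in Definition~\ref{def:Angle-congruence}. By Axiom~D4, place $\mathbf{p}$ on the ray from $\mathbf{a}$ through $\mathbf{b'}$ with $d(\mathbf{ap})=d(\mathbf{ab})$, and $\mathbf{q}$ on the ray from $\mathbf{a}$ through $\mathbf{c'}$ with $d(\mathbf{aq})=d(\mathbf{ac})$; a routine betweenness argument (from $B(\mathbf{bab'})$, $B(\mathbf{cac'})$ and Definitions~\ref{def:Between}, \ref{def:Collinear}) shows that $\mathbf{a}$ is then the midpoint of both $\mathbf{bp}$ and $\mathbf{cq}$, i.e.\ $B(\mathbf{bap})$, $B(\mathbf{caq})$ with the corresponding halves congruent. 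The crux is the identity $d(\mathbf{pq})=d(\mathbf{bc})$, i.e.\ that the point reflection through $\mathbf{a}$ preserves this distance. Granting it, $d(\mathbf{ap})=d(\mathbf{ab})$, $d(\mathbf{aq})=d(\mathbf{ac})$ and $d(\mathbf{pq})=d(\mathbf{bc})$ give $\triangle\mathbf{paq}\equiv\triangle\mathbf{bac}$ by Definition~\ref{def:TriAngle-congruence}; since $\mathbf{p}$ lies on the ray $\mathbf{ab'}$ and $\mathbf{q}$ on the ray $\mathbf{ac'}$, taking $\mathbf{p},\mathbf{q}$ as the docked vertices at $\mathbf{a}$ in Definition~\ref{def:Angle-congruence} yields $\angle\mathbf{bac}\equiv\angle\mathbf{b'ac'}$.

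To prove $d(\mathbf{pq})=d(\mathbf{bc})$ I would argue in the computational style used for Theorems~\ref{thm:Five-segments} and~\ref{thm:Axiom-of-Pasch}: drop a perpendicular from $\mathbf{c}$ to line $\mathbf{ab}$ with foot $\mathbf{f}$ and from $\mathbf{q}$ to the same line with foot $\mathbf{g}$, set $h=d(\mathbf{cf})$, $u=d(\mathbf{af})$, and apply the Pythagorean theorem (Theorem~\ref{thm:(Pythagoras)}) on each side to obtain $d(\mathbf{bc})^{2}=d(\mathbf{ab})^{2}+d(\mathbf{ac})^{2}\mp 2\,d(\mathbf{ab})\,u$ together with the same expression with $\mathbf{f},\mathbf{b}$ replaced by $\mathbf{g},\mathbf{p}$; after checking $d(\mathbf{ag})=u$ with $\mathbf{g}$ on the ray opposite the one containing $\mathbf{f}$, the two expressions coincide and Axiom~D1 gives the equality. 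Equivalently, $d(\mathbf{pq})=d(\mathbf{bc})$ follows from the five-segment theorem (Theorem~\ref{thm:Five-segments}) in the same way ``point symmetry is a motion'' is obtained in Tarski's framework. The main obstacle is exactly this distance identity: the naive justification that $\mathbf{f}$ and $\mathbf{g}$ are reflections of each other through $\mathbf{a}$ would invoke congruence of the vertical angles $\angle\mathbf{caf}$ and $\angle\mathbf{qag}$ — which is precisely what is being proved — so one must instead carry the $\mp$-case analysis through the Pythagorean computation, or invoke Theorem~\ref{thm:Five-segments}, both of which sidestep the circularity.
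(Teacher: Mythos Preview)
Your $\mathscr{E}_{da}$ proof is essentially the paper's: both split two straight angles with a common ray, get two ``sum $=180$'' equations from Axioms~A3 and~A4, subtract, and apply Axiom~A2. (You use $\angle\mathbf{cab'}$ as the shared summand; the paper uses an adjacent one, but the mechanism is identical.)

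Your $\mathscr{E}_{d}$ proof, however, follows a genuinely different route. You reduce to the statement ``point reflection through $\mathbf{a}$ is an isometry'' and propose to discharge $d(\mathbf{pq})=d(\mathbf{bc})$ either by a law-of-cosines computation in the style of Theorem~\ref{thm:Five-segments} or by invoking that theorem as a black box. The paper instead places \emph{four} points $\mathbf{p},\mathbf{p'},\mathbf{q},\mathbf{q'}$ on the four rays all at the \emph{same} distance from $\mathbf{a}$; then $\triangle\mathbf{apq'}$ is isosceles, so Corollary~\ref{thm:(Isosceles-triangle)} gives $\angle\mathbf{apq'}\equiv\angle\mathbf{aq'p}$, and since $\mathbf{p},\mathbf{a},\mathbf{p'}$ and $\mathbf{q},\mathbf{a},\mathbf{q'}$ are collinear these are exactly the included angles needed to apply SAS (Theorem~\ref{thm:(SAS)}) to $\triangle\mathbf{pq'q}$ and $\triangle\mathbf{q'pp'}$, yielding $d(\mathbf{pq})=d(\mathbf{p'q'})$ directly. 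The paper's argument is lighter: it uses only the isosceles corollary and SAS, never touching the Pythagorean theorem or the five-segment theorem, and it completely sidesteps the circularity issue you correctly flag. Your approach is sound in principle---point symmetry being a motion does follow from five segments in the Tarski development, and the law-of-cosines formula is effectively derived inside the proof of Theorem~\ref{thm:Five-segments}---but it imports heavier tools and leaves the step ``$d(\mathbf{ag})=u$ with $\mathbf{g}$ on the opposite ray'' only sketched; the equal-radius trick is what lets the paper avoid that bookkeeping entirely.
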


\begin{center}
\includegraphics[scale=0.22]{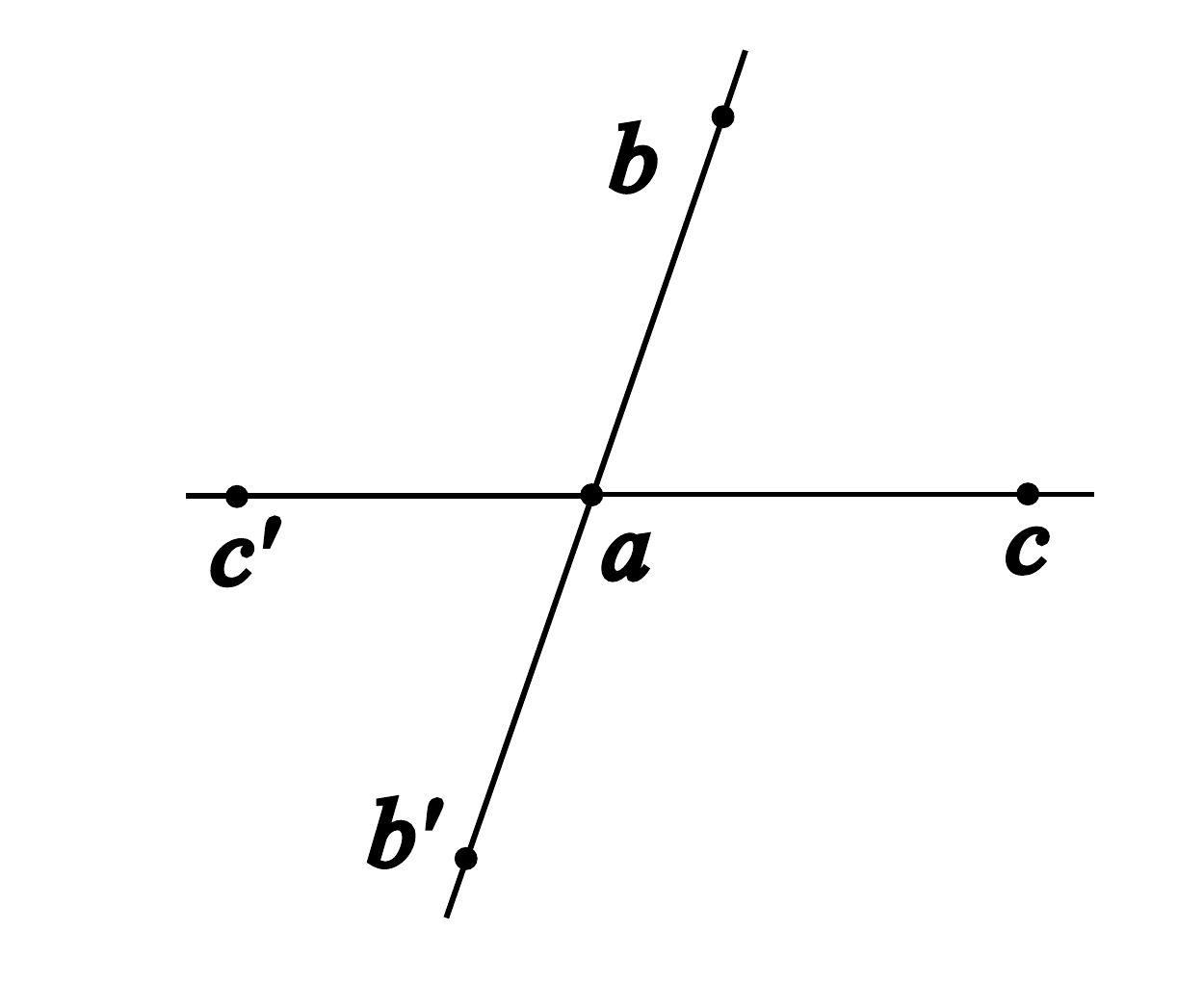}
\par\end{center}

\vspace{-12pt}
\begin{center}
\captionsetup{width=0.26\textwidth}
\captionof{figure}{Verticle angles.}
\end{center}
\vspace{-5pt}
\begin{proof}
(Proof in $\mathscr{E}_{da}$) Because $B(\mathbf{bab'})$, $\angle\mathbf{cab}$
and $\angle\mathbf{bac'}$ are supplementary angles. Therefore, $a(\mathbf{cab})+a(\mathbf{bac'})=180$.
Similarly, $\angle\mathbf{bac'}$ and $\angle\mathbf{c'ab'}$ are
supplementary angles. We have $a(\mathbf{bac'})+a(\mathbf{c'ab'})=180$.
Therefore, we have $a(\mathbf{cab})=a(\mathbf{c'ab'})$. This is the
same as $a(\mathbf{bac})=a(\mathbf{b'ac'})$. By Axiom A2, we have
$\angle\mathbf{bac}\equiv\angle\mathbf{b'ac'}$.
\end{proof}

This is Proposition I.15 in Euclid's \emph{Elements}. Euclid's proof
is basically the same as above, except that Euclid did not make the
measure of an angle explicit, and he simply cited his Common Notion
3: ``If equals be subtracted from equals, the remainders are equal.''
Note that we have defined the addition ($\tilde{+}$) of two adjacent
angles to be a third angle (Definition \ref{def:Sum-angles}), but
there is still a difference between an angle and the measure (as a
number) of the angle.

As we have commented, $\mathscr{E}_{da}$ is a conservative extension
of $\mathscr{E}_{d}$. The vocabulary of Theorem \ref{thm:Vertical-angles}
does not involve the angle function $a$. It should be able to be
proved in $\mathscr{E}_{d}$. An alternative proof strictly within
the theory $\mathscr{E}_{d}$ is shown in the following.

\medskip{}

\begin{proof}
(Proof within $\mathscr{E}_{d}$) By Theorem \ref{thm:Angle-same-ray},
the points on the same ``ray'' make the same (congruent) angle.
Instead of reasoning with points $\mathbf{b},\mathbf{b'},\mathbf{c},\mathbf{c'}$,
we mark points $\mathbf{p},\mathbf{p'},\mathbf{q},\mathbf{q'}$ such
that $d(\mathbf{ap})$, $d(\mathbf{ap'})$, $d(\mathbf{aq})$, and
$d(\mathbf{aq'})$ all have the same length. All we have to show is
that $\angle\mathbf{paq}\equiv\angle\mathbf{p'aq'}$. To show this,
it suffices to show that $d(\mathbf{pq})=d(\mathbf{p'q'})$, because
that would guarantee $\triangle\mathbf{paq}\equiv\triangle\mathbf{\mathbf{p'aq'}}$.

First we observe that $\triangle\mathbf{apq'}$ is an isosceles triangle.
Therefore, by Corollary \ref{thm:(Isosceles-triangle)}, we have $\angle\mathbf{apq'}\equiv\angle\mathbf{aq'p}$.

Consider $\triangle\mathbf{qpq'}$ and $\triangle\mathbf{p'q'p}$.
They have a common side $\mathbf{pq'}$. The sides $\mathbf{qq'}$
and $\mathbf{pp'}$ are congruent. The contained angles $\angle\mathbf{pq'q}$
and $\angle\mathbf{q'pp'}$ are also congruent. Therefore, $\triangle\mathbf{pq'q}\equiv\mathbf{\triangle\mathbf{q'pp'}}$
by Theorem \ref{thm:(SAS)} (SAS). We then have $d(\mathbf{pq})=d(\mathbf{p'q'})$.

Finally we have $\triangle\mathbf{paq}\equiv\triangle\mathbf{\mathbf{p'aq'}}$.
Hence, $\angle\mathbf{paq}\equiv\angle\mathbf{p'aq'}$. This is the
same as $\angle\mathbf{bac}\equiv\angle\mathbf{b'ac'}$.

\begin{center}

\includegraphics[scale=0.22]{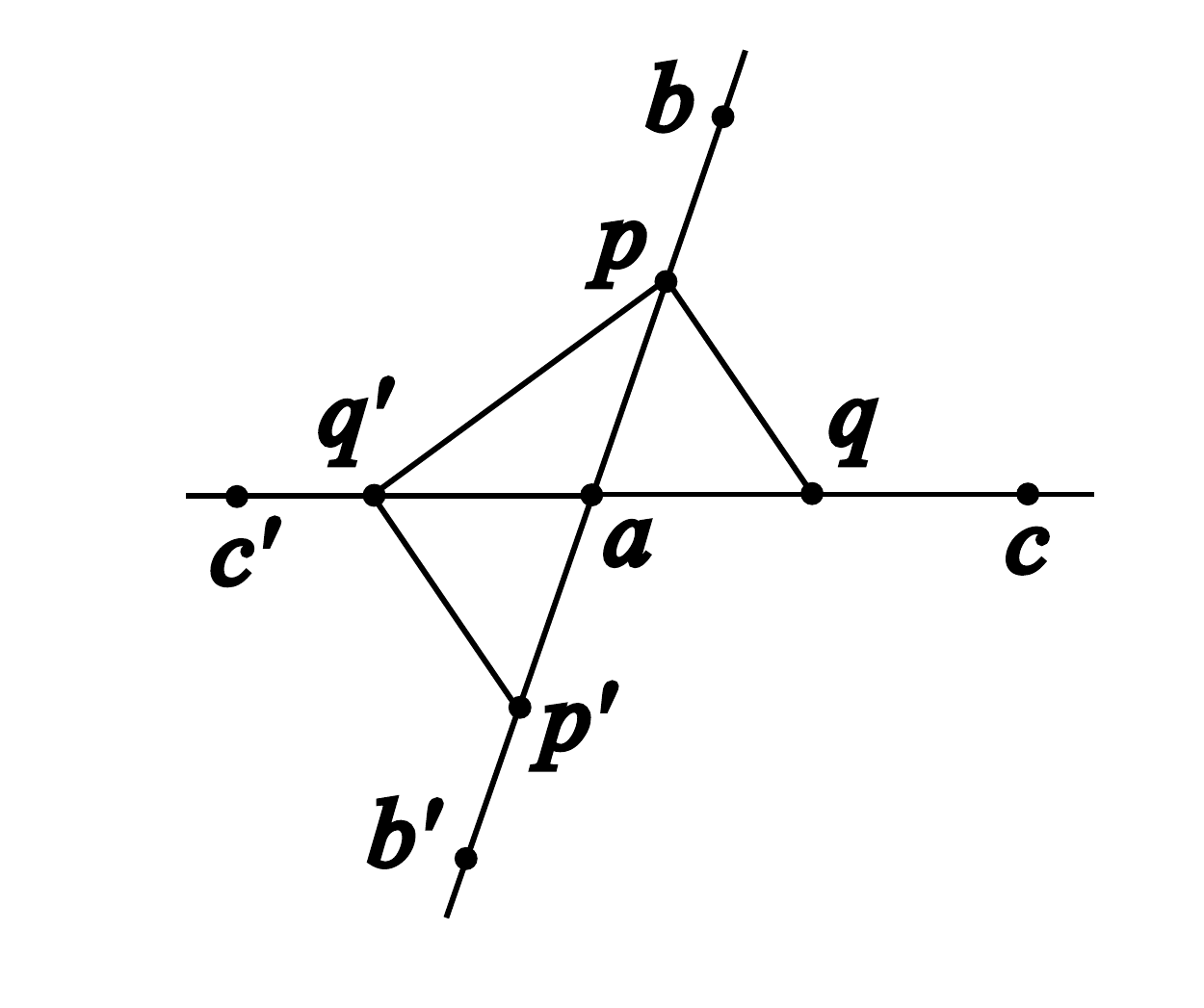}

\end{center}
\end{proof}

\section{Consistency of $\mathscr{E}_{d}$ and $\mathscr{E}_{da}$\label{sec:Consistency}}

To show the consistency of $\mathscr{E}_{d}$, we construct a model
that interprets each of the terms in the language
\[
L(\mathscr{E}_{d})=L(d;\,=,+,\cdot,<,0,1).
\]
The numbers in $L(\mathscr{E}_{d})$ are interpreted as the real numbers,
$+,\cdot$ as addition and multiplication of real numbers, $<$ the
``less than'' relation, $0$ the number $0$, and $1$ the number
$1$.

The real numbers satisfy all the axioms of real closed fields RCF1
through RCF17.

A point is interpreted as a pair of real numbers $(x,y)$.

Let $\mathbf{p}=(x_{1},y_{1})$ and $\mathbf{q}=(x_{2},y_{2})$ be
two points. The distance $d(\mathbf{pq})$ is interpreted as
\[
d(\mathbf{pq})=\sqrt{(x_{2}-x_{1})^{2}+(y_{2}-y_{1})^{2}}.
\]

\noindent Verification of Axioms D1, D2, and D3 is straightforward.\medskip{}

Let $\mathbf{t}=(x_{3},y_{3})$. $B(\mathbf{pqt})$ is interpreted
as $\exists\lambda\in\mathbb{R}$ such that
\begin{align}
x_{3} & =x_{1}+\lambda(x_{2}-x_{1}),\nonumber \\
y_{3} & =y_{1}+\lambda(y_{2}-y_{1}),\label{eq:Linear-extention-1}\\
1 & \leqslant\lambda.\nonumber 
\end{align}
The distance between $\mathbf{q}$ and $\mathbf{t}$ is
\[
d(\mathbf{pt})=(\lambda-1)\sqrt{(x_{2}-x_{1})^{2}+(y_{2}-y_{1})^{2}}.
\]

\medskip{}
\noindent Verification of Axiom D4:

We discuss two cases.\medskip{}

\noindent Case (1): $\mathbf{p}\neq\mathbf{q}$. For any real number
$0\leqslant x$, we can choose
\[
\lambda=1+\frac{x}{\sqrt{(x_{2}-x_{1})^{2}+(y_{2}-y_{1})^{2}}}.
\]

\noindent Substitute this into Equation (\ref{eq:Linear-extention-1}).
The point $\mathbf{t}$ with coordinates $(x_{3},y_{3})$ defined
by \mbox{Equation (\ref{eq:Linear-extention-1})} satisfies $B(\mathbf{pqt})$
and $d(\mathbf{qt})=x$.\medskip{}

\noindent Case (2): $\mathbf{p}=\mathbf{q}$, namely $x_{1}=x_{2}$
and $y_{1}=y_{2}$. It suffices to find $\mathbf{t}=(x_{3},y_{3})$
such that $d(\mathbf{qt})=x$. Such a point $\mathbf{t}$ is not unique,
but it suffices if we choose

\noindent
\begin{align*}
x_{3} & =x_{2}+x,\\
y_{3} & =y_{2}.
\end{align*}

This satisfies $B(\mathbf{pqt})$ and $d(\mathbf{qt})=x$.\medskip{}

\noindent Verification of Axioms D5 through D7 requires just straightforward
calculations in analytic geometry, which we shall omit here.

\medskip{}

To show the consistency of $\mathscr{E}_{da}$, first observe that
the language $L(\mathscr{E}_{d})$ is a subset of the language $L(\mathscr{E}_{da})$.
Recall that
\[
L(\mathscr{E}_{da})=L(d,a;\,=,+,\cdot,<,0,1).
\]
The language $L(\mathscr{E}_{da})$ has an additional primitive function,
the angle function $a$. The theory $\mathscr{E}_{d}$ is a subtheory
of the theory $\mathscr{E}_{da}$. Therefore, the model we have constructed
for $\mathscr{E}_{d}$ can serve as part of a model for $\mathscr{E}_{da}$.
We only need to give an interpretation of the angle function $a$,
and verify Axioms A1 through A4.

Let $\mathbf{v}=(x_{0},y_{0})$, $\mathbf{p}=(x_{1},y_{1})$, $\mathbf{q}=(x_{2},y_{2})$,
$\mathbf{p}\neq\mathbf{v}$, and $\mathbf{q}\neq\mathbf{v}$, as shown
in Figure \ref{fig:Interpretation-angle-measure}.
\begin{center}
\includegraphics[scale=0.22]{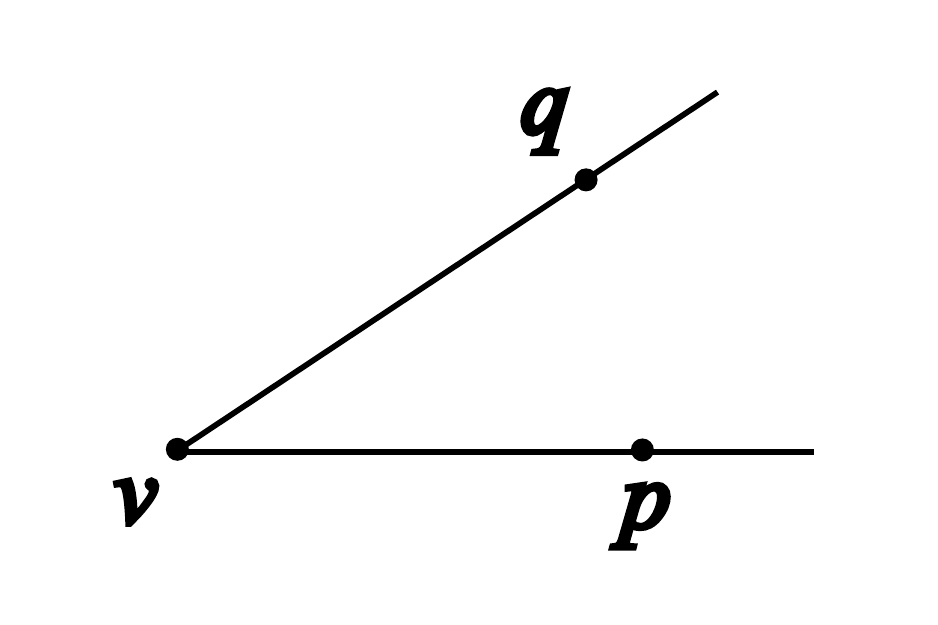}
\par\end{center}

\vspace{-12pt}
\begin{center}
\captionsetup{width=0.46\textwidth}
\captionof{figure}{Interpretation of the angle measure.} \label{fig:Interpretation-angle-measure}
\end{center}
\vspace{-5pt}

We first define
\begin{align*}
\overline{x}_{1} & =x_{1}-x_{0},\\
\overline{y}_{1} & =y_{1}-y_{0},\\
\overline{x}_{2} & =x_{2}-x_{0},\\
\overline{y}_{2} & =y_{2}-y_{0}.
\end{align*}

The angle measure of $\angle\mathbf{pvq}$ is interpreted as
\begin{equation}
a(\mathbf{pvq})=\frac{180}{\pi}\cdot\cos^{-1}\left[\frac{\overline{x}_{1}\cdot\overline{x}_{2}+\overline{y}_{1}\cdot\overline{y}_{2}}{\sqrt{\left(\overline{x}_{1}^{2}+\overline{y}_{1}^{2}\right)\left(\overline{x}_{2}^{2}+\overline{y}_{2}^{2}\right)}}\right].\label{eq:Def-of-Angle-in-Model}
\end{equation}

Verification of Axioms A1 through A4 requires just straightforward
analytic geometry calculations, which are omitted here.

\section{Theory $\mathscr{E}_{d}$ and Tarski's $\mathscr{E}_{2}$ Are Mutually
Interpretable with Parameters \label{sec:Tarski's-Interpretable}}

\medskip{}

\subsection{There Is a Syntactical Translation from $\mathscr{E}_{2}$ to $\mathscr{E}_{d}$\medskip{}
}

Tarski's $\mathscr{E}_{2}$ is parameter-free interpretable in $\mathscr{E}_{d}$.
This interpretation is faithful. In addition, we can make a stronger
assertion than a faithful interpretation: there is a syntactical translation
from $\mathscr{E}_{2}$ to $\mathscr{E}_{d}$ (see definitions of
interpretation and syntactical translation in Enderton \cite{Enderton}).

The syntactical translation scheme is as follows:

Any variable in $\mathscr{E}_{2}$ is translated to a point variable
in $\mathscr{E}_{d}$.

Tarski's primitive predicates in $\mathscr{E}_{2}$ are translated
to $\mathscr{E}_{d}$ as follows:

\begin{enumerate}

\item[(1)]$B(\mathbf{pvq})$ is translated to:

\noindent
\begin{equation}
d(\mathbf{pq})=d(\mathbf{pv})+d(\mathbf{vq}).\label{eq:Def-of-B}
\end{equation}

\noindent\item[(2)]$D(\mathbf{pquv})$ is translated to:
\begin{equation}
d(\mathbf{pq})=d(\mathbf{uv}).\label{eq:Def-of-D}
\end{equation}

\end{enumerate}

Tarski's axioms T1 through T11 for $\mathscr{E}_{2}$ are listed
in Appendix \ref{appendix:b}.

T5 (Axiom of five segments) is proved as Theorem \ref{thm:Five-segments},
T7 (Axiom of Pasch) as \mbox{Theorem \ref{thm:Axiom-of-Pasch}} in Section
\ref{sec:Theory-Ed}. Other axioms are straightforward to verify in
$\mathscr{E}_{d}$, using the above syntactical translation, and Axioms
D1 through D7 plus RCF1 through RCF17.

\subsection{Theory $\mathscr{E}_{d}$ Is Interpretable (with Parameters) into
$\mathscr{E}_{2}$\medskip{}
}

To interpret $\mathscr{E}_{d}$ in $\mathscr{E}_{2}$, we devise a
mapping $\tau$ from $\mathscr{E}_{d}$ to $\mathscr{E}_{2}$.

Any point variable in $\mathscr{E}_{d}$ remains a point variable
in $\mathscr{E}_{2}$. 

We also need to find a way to interpret the number variables of $\mathscr{E}_{d}$
in $\mathscr{E}_{2}$, because $\mathscr{E}_{2}$ has only point variables
but no number variables. This can be done with the help of the 
segment arithmetic with respect to an arbitrarily selected frame of
three points $\mathbf{\mathbf{\hat{o}}}$, $\mathbf{\mathbf{\hat{e}}}$,
and $\mathbf{e'}$ which are non-collinear.

We first select any three non-collinear points $\mathbf{\mathbf{\hat{o}}}$,
$\mathbf{\mathbf{\hat{e}}}$, and $\mathbf{e'}$.

Our sub-universe in $\mathscr{E}_{2}$ is all the points on the line
of $\mathbf{\mathbf{\hat{o}}\mathbf{\mathbf{\hat{e}}}}$, which are
used to interpret the numbers of RCF. The constant $0$ is mapped
to $\mathbf{\mathbf{\hat{o}}}$, and the constant $1$ is mapped to
$\mathbf{\mathbf{\hat{e}}}$. We will call the line $\mathbf{\mathbf{\hat{o}}\mathbf{\mathbf{\hat{e}}}}$
the\emph{ number line}.

We will use the following convention in notation: a hat is used to
denote a point on the number line. For example, the symbol $\mathbf{\hat{x}}$
automatically implies that $L(\mathbf{\hat{o}}\mathbf{\mathbf{\hat{e}}}\mathbf{\hat{x}})$
(see \mbox{Definition \ref{def:Collinear}).}

Figure \ref{def:Sum-Schwabhauser} shows SST's definition \cite{Schwabhauser}
that point $\mathbf{\hat{c}}$ is the\emph{ geometric sum }of points
$\mathbf{\hat{a}}$ and $\mathbf{\hat{b}}$. We will omit the formal
sentence, but just describe it informally:

Join $\mathbf{\hat{e}}$ and $\mathbf{e'}$ with a line; 

Draw $\mathbf{\hat{a}b'}\parallel\mathbf{\hat{e}}\mathbf{e'}$, intersecting
$\mathbf{\hat{o}}\mathbf{e'}$ at $\mathbf{b'}$ ($\parallel$ denotes
parallel);

Draw $\mathbf{b'c'\parallel\mathbf{\hat{o}}\mathbf{\hat{e}}}$;

Draw $\mathbf{\hat{b}c'\parallel\mathbf{\hat{o}}e'}$, intersecting
$\mathbf{b'c'}$ at $\mathbf{c'}$;

Draw $\mathbf{c'\hat{c}}\parallel\mathbf{\hat{a}b'}$, intersecting
$\mathbf{\hat{o}}\mathbf{\hat{e}}$ at $\mathbf{\hat{c}}$. 

We then define $\mathbf{\hat{c}}$ as the \emph{geometric sum} of
$\mathbf{\hat{a}}$ and $\mathbf{\hat{b}}$. In symbols, we write
\[
\mathbf{\hat{c}}=\mathbf{\hat{a}}\hat{+}\mathbf{\hat{b}}.
\]

\begin{center}
\includegraphics[scale=0.17]{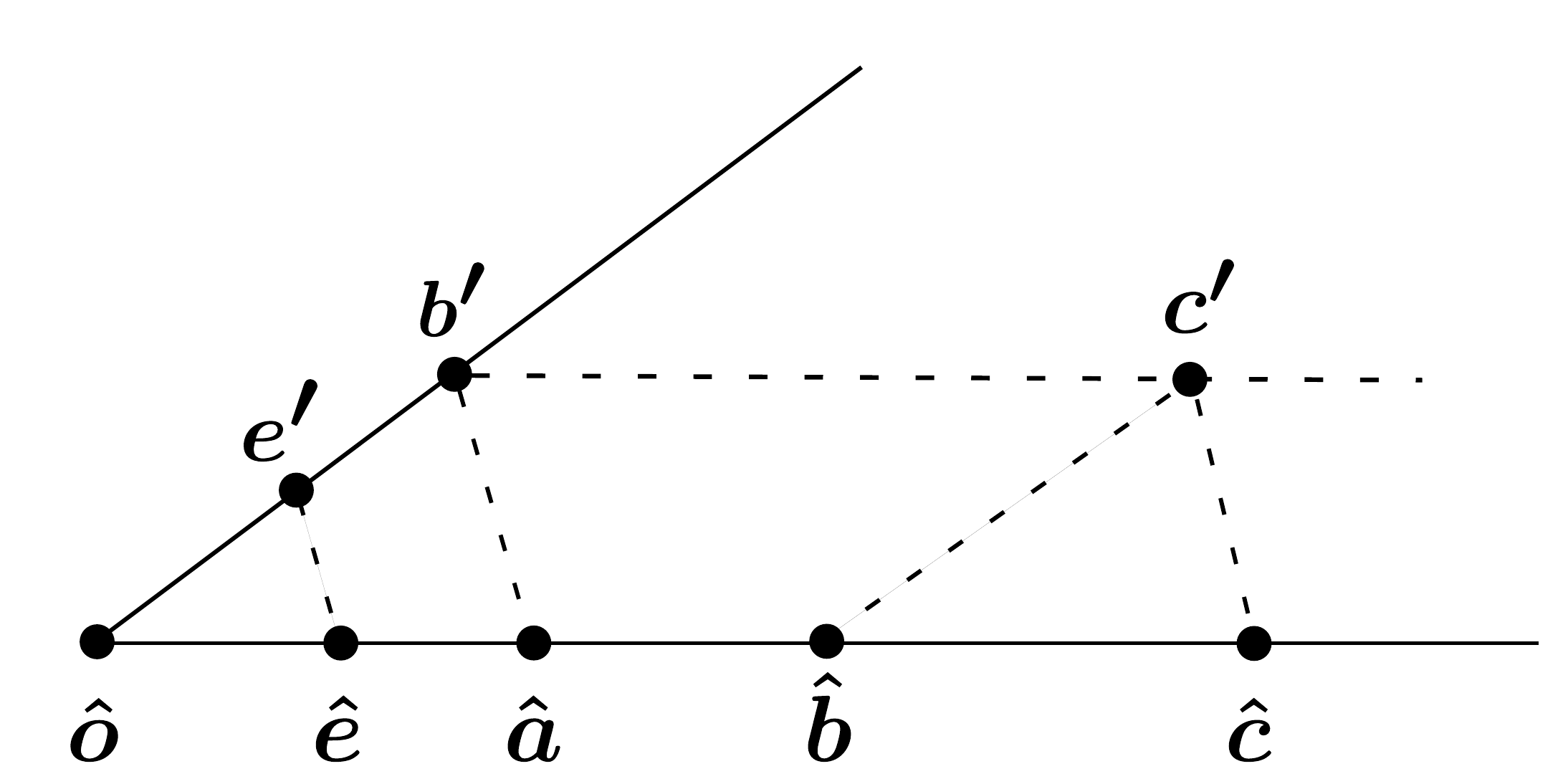}
\par\end{center}

\begin{center}
\captionsetup{width=0.84\textwidth}
\captionof{figure}{The definition that point $\mathbf{\hat{c}}$ is the geometric sum of points $\mathbf{\hat{a}}$ and $\mathbf{\hat{b}}$.}
~

\label{def:Sum-Schwabhauser}
\vspace{-5pt}
\par\end{center}

Figure \ref{def:Product-Schwabhauser} shows SST's definition \cite{Schwabhauser}
that point $\mathbf{\hat{c}}$ is the\emph{ geometric product }of
points $\mathbf{\hat{a}}$ and $\mathbf{\hat{b}}$. Informally:

Join $\mathbf{\hat{e}}$ and $\mathbf{e'}$ with a line;

Join $\mathbf{\hat{a}}$ and $\mathbf{e'}$ with a line;

Draw $\mathbf{\hat{b}b'}\parallel\mathbf{\hat{e}}\mathbf{e'}$, intersecting
$\mathbf{\hat{o}}\mathbf{e'}$ at $\mathbf{b'}$;

Draw $\mathbf{b'\hat{c}}\parallel\mathbf{\hat{a}e'}$, intersecting
$\mathbf{\hat{o}}\mathbf{\hat{e}}$ at $\mathbf{\hat{c}}$. 

We then define $\mathbf{\hat{c}}$ as the \emph{geometric product}
of $\mathbf{\hat{a}}$ and $\mathbf{\hat{b}}$. In symbols, we write
\[
\mathbf{\hat{c}}=\mathbf{\hat{a}}\hat{\cdot}\mathbf{\hat{b}}.
\]

\begin{center}
\includegraphics[scale=0.17]{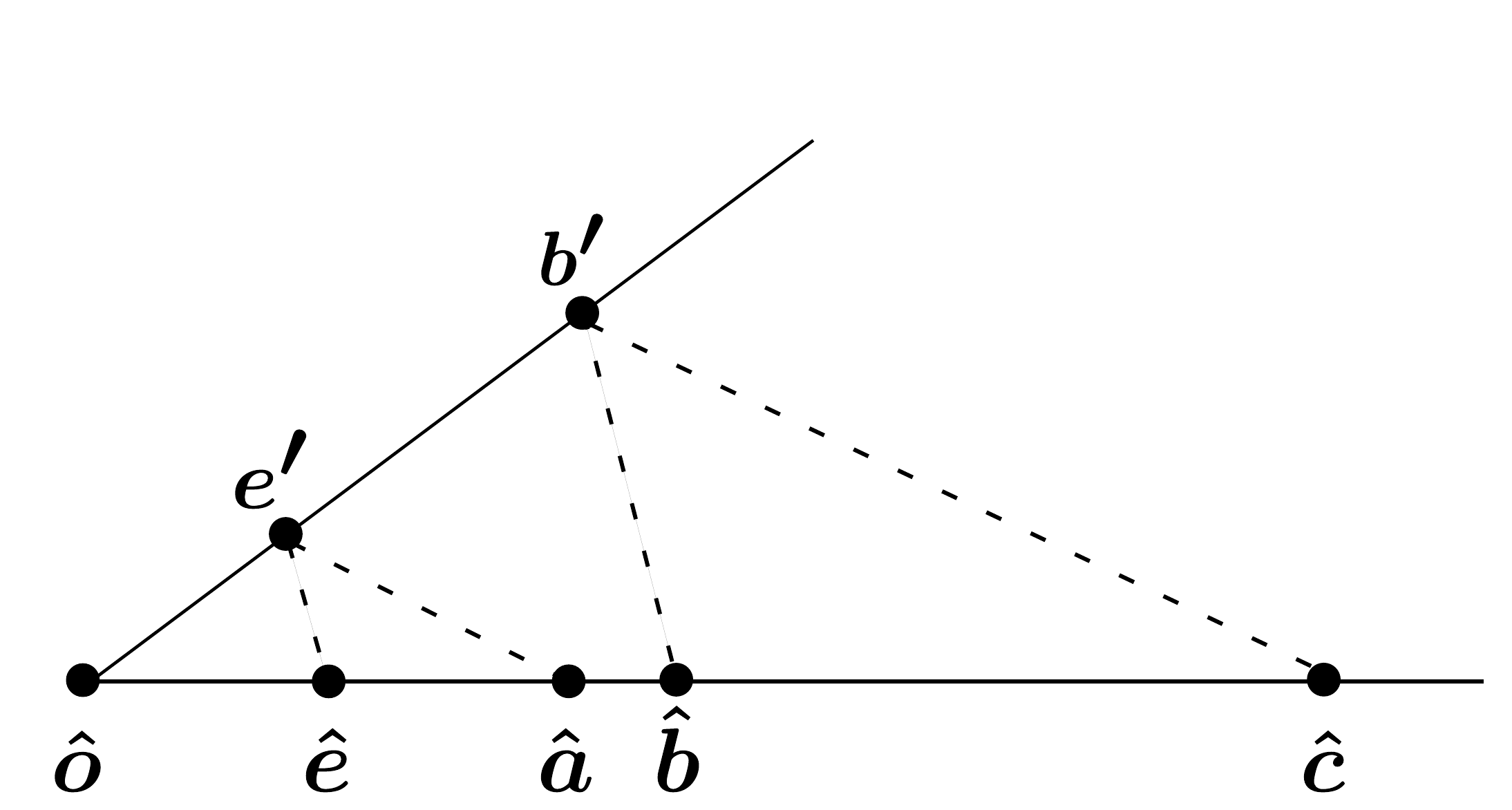}
\par\end{center}

\begin{center}
\captionsetup{width=0.84\textwidth}
\captionof{figure}{The definition that point $\mathbf{\hat{c}}$ is the geometric product of points $\mathbf{\hat{a}}$ and $\mathbf{\hat{b}}$.}
~

\label{def:Product-Schwabhauser}
\vspace{-5pt}
\par\end{center}

It should be noted that RCF is a subtheory of $\mathscr{E}_{d}$.
We first describe how to interpret sentences in RCF into $\mathscr{E}_{2}$,
namely those sentences in $\mathscr{E}_{d}$ that involve number variables
only.

Each number variable $x$ is mapped to a point variable $\hat{\mathbf{x}}$
for a point on the number line in $\mathscr{E}_{2}$. For example,
the \emph{wff} (\emph{well-formed formula}) $z=x+y$ in RCF (also
in $\mathscr{E}_{d}$) will be interpreted as $\hat{\mathbf{z}}=\hat{\mathbf{x}}\hat{+}\hat{\mathbf{y}}$
in $\mathscr{E}_{2}$. The \emph{wff} $z=x\cdot y$ in RCF (also in
$\mathscr{E}_{d}$) will be interpreted as $\hat{\mathbf{z}}=\hat{\mathbf{x}}\hat{\cdot}\hat{\mathbf{y}}$
in $\mathscr{E}_{2}$.

In the language $L(\mathscr{E}_{d})$, there are only three atomic
predicate symbols:

\begin{enumerate}

\item[(1)] $=$ as a 2--place predicate denoting that two points
are identical.

\item[(2)] $=$ as a 2-place predicate denoting that two numbers
are equal.

\item[(3)] $<$ as a 2-place predicate denoting that one number is
less than the other.

\end{enumerate}

First, consider the case where $=$ is a predicate for points, for
example, $\mathbf{p}=\mathbf{q}$. This is already a valid \emph{wff
}in $\mathscr{E}_{2}$ and it can be kept as is.

Next, consider the case where $=$ is a predicate for numbers. We
have discussed a special case when both sides of the equation involve
number constants and variables only. In general, we also need to consider
the function symbol $d(\mathbf{pq})$. The following is an example
of a \emph{wff} in $\mathscr{E}_{d}$:
\begin{equation}
3\cdot\left[d(\mathbf{pq})\cdot d(\mathbf{uv})\right]+\left[(x+2)\cdot y\right]\cdot y+1=y+d(\mathbf{pq}),\label{eq:Atomic-formula}
\end{equation}
where $\mathbf{p},\mathbf{q},\mathbf{u},\mathbf{v}$ are point variables,
and $x,y$ are number variables.

Note that $2$ is the abbreviation of $1+1$, and $3$ is the abbreviation
of $1+1+1$. 

Therefore, $1$ is interpreted as $\mathbf{\hat{e}}$, $2$ is interpreted
as $\mathbf{\hat{e}}\hat{+}\mathbf{\hat{e}}$, and $3$ is interpreted
as $\mathbf{\hat{e}}\hat{+}\mathbf{\hat{e}}\mathbf{\hat{+}\hat{e}}$.

The entire \emph{wff} can be interpreted as
\begin{equation}
\begin{aligned} & \exists\mathbf{\hat{o}}\exists\mathbf{\hat{e}}\exists\hat{\mathbf{x}}\exists\hat{\mathbf{y}}\exists\hat{\mathbf{a}}\exists\hat{\mathbf{b}}\\
 & \wedge D\left(\mathbf{pq}\mathbf{\hat{o}}\hat{\mathbf{a}}\right)\wedge D\left(\mathbf{uv}\mathbf{\hat{o}}\hat{\mathbf{b}}\right)\wedge\left(\mathbf{\hat{e}}\hat{+}\mathbf{\hat{e}}\mathbf{\hat{+}\hat{e}}\right)\hat{\cdot}\left[\hat{\mathbf{a}}\hat{\cdot}\hat{\mathbf{b}}\right]\hat{+}\left[\left(\hat{\mathbf{x}}\hat{+}\mathbf{\hat{e}}\mathbf{\hat{+}\hat{e}}\right)\hat{\cdot}\hat{\mathbf{y}}\right]\hat{\cdot}\hat{\mathbf{y}}\hat{+}\mathbf{\hat{e}}=\hat{\mathbf{y}}\hat{+}\hat{\mathbf{a}}.
\end{aligned}
\label{eq:Interpreted-Sentence}
\end{equation}

In the language of $L(\mathscr{E}_{d})$, the Pythagorean theorem
(Theorem \ref{thm:(Pythagoras)}) asserts
\[
\forall\mathbf{a}\forall\mathbf{b}\forall\mathbf{c}\,\,\,R(\mathbf{bac})\rightarrow d^{2}(\mathbf{ab})+d^{2}(\mathbf{ac})=d^{2}(\mathbf{bc}).
\]
This can be interpreted in $L(\mathscr{E}_{2})$ as
\[
\begin{aligned}\forall\mathbf{a}\forall\mathbf{b}\forall\mathbf{c}\exists\mathbf{\hat{o}}\exists\mathbf{\hat{e}}\exists\hat{\mathbf{x}}\exists\hat{\mathbf{y}}\exists\hat{\mathbf{z}}\,\, & \mathbf{a}\neq\mathbf{b}\wedge\mathbf{a}\ne\mathbf{c}\wedge\mathbf{b}\ne\mathbf{c}\wedge R(\mathbf{bac})\\
 & \wedge D\left(\mathbf{ab}\mathbf{\hat{o}}\hat{\mathbf{x}}\right)\wedge D\left(\mathbf{ac}\mathbf{\hat{o}}\hat{\mathbf{y}}\right)\wedge D\left(\mathbf{bc}\mathbf{\hat{o}}\hat{\mathbf{z}}\right)\\
 & \rightarrow\hat{\mathbf{x}}\hat{\cdot}\hat{\mathbf{x}}\hat{+}\hat{\mathbf{y}}\hat{\cdot}\hat{\mathbf{y}}=\hat{\mathbf{z}}\hat{\cdot}\hat{\mathbf{z}}.
\end{aligned}
\]

\noindent This interpretation scheme works the same for the predicate
$<$ in $\mathscr{E}_{d}$.

\noindent The interpreted formula can often be simplified for special
cases. The following are two types of \emph{wff}s that are most frequently
encountered in $\mathscr{E}_{d}$:

(1) $d(\mathbf{pq})=d(\mathbf{uv})$ can be interpreted as $D(\mathbf{pquv})$
in $\mathscr{E}_{2}$.

(2) $d(\mathbf{pq})=d(\mathbf{uv})+d(\mathbf{rs)}$ can be interpreted
as
\[
\exists\mathbf{a}\exists\mathbf{b}\exists\mathbf{c}\,\,\,B(\mathbf{abc})\wedge D(\mathbf{pqac})\wedge D(\mathbf{uvab})\wedge D(\mathbf{rsbc})
\]
in $\mathscr{E}_{2}$.

\section{Completeness and Decidability of $\mathscr{E}_{d}$}

In the previous section, we have established that $\mathscr{E}_{d}$
is mutually interpretable (with parameters) to $\mathscr{E}_{2}$
and the interpretation if faithful. Because it has been shown that
$\mathscr{E}_{2}$ is complete and decidable, $\mathscr{E}_{d}$ is
also complete and decidable.

\section{Theory $\mathscr{H}_{da}$---Quantitative Hyperbolic Geometry}

We briefly provide a sketch of a theory $\mathscr{H}_{da}$ of quantitative
hyperbolic geometry. Its language $L(\mathscr{H}_{da})$ is the same
as $L(\mathscr{E}_{da})$ with two sorts.

We only need to replace Axiom D5 of $\mathscr{E}_{da}$ with Axiom
D5-h, and retain the rest of the axioms in $\mathscr{E}_{da}$, as
listed below. We shall not go into the details in this direction.

\noindent\rule[0.5ex]{1\columnwidth}{1pt}

\noindent\textbf{\emph{$\mathscr{H}_{da}$}}\textbf{: }\textbf{\emph{The
Theory of Plane Quantitative Hyperbolic Geometry with Distance and
Angle Functions in the language of $L(d,a;\,=,+,\cdot,<,0,1)$}}

\noindent\rule[0.5ex]{1\columnwidth}{1pt}\medskip{}

\noindent\textbf{Axioms RCF1 through RCF17}\medskip{}

\noindent\textbf{Axioms D1} \textbf{through D4}\smallskip{}

\noindent\textbf{Axiom D5-h}. (AAA$\rightarrow$SSS) Given two triangles,
if all three pairs of angles are equal, then their sides are congruent.
\begin{align*}
\begin{aligned}\end{aligned}
 & \forall\mathbf{a}\forall\mathbf{b}\forall\mathbf{c}\forall\mathbf{a'}\forall\mathbf{b'}\forall\mathbf{c'}\\
 & \forall\mathbf{a}\forall\mathbf{b}\forall\mathbf{c}\forall\mathbf{a'}\forall\mathbf{b'}\forall\mathbf{c'}\,\,\,a(\mathbf{abc})=\,a(\mathbf{a'b'c'})\wedge\,a(\mathbf{bca})=\,a(\mathbf{b'c'a'})\wedge\,a(\mathbf{cab})=\,a(\mathbf{c'a'b'})\\
 & \rightarrow d(\mathbf{ab})=d(\mathbf{a'b'})\wedge d(\mathbf{bc})=d(\mathbf{b'c'})\wedge d(\mathbf{ca})=d(\mathbf{c'a'}).
\end{align*}

\noindent\textbf{Axioms D6, D7}\smallskip{}

\noindent\textbf{Axioms A1} \textbf{through A4}

\section{Theories $\mathscr{E}_{d}^{n}$ and $\mathscr{E}_{da}^{n}$---Quantitative
Euclidean Geometry in Higher Dimensions\label{sec:Geometry-of-Higher}}

\textbf{}Both systems $\mathscr{E}_{d}$ and $\mathscr{E}_{da}$
can be generalized to $\mathscr{E}_{d}^{n}$ and $\mathscr{E}_{da}^{n}$
for $n$-dimensional Euclidean geometry ($n\geqslant2$). We only
need to replace the two dimension axioms D6 and D7 with D6-n and
D7-n, as described below.\medskip{}

\noindent\textbf{Axiom D6-n}. Dimension lower bound (at least $n$)
\begin{align*}
\begin{aligned}\end{aligned}
 & \exists\mathbf{p}_{0}\exists\mathbf{p}_{1}\exists\cdots\exists\mathbf{p}_{n}\\
 & d(\mathbf{p}_{0}\mathbf{p}_{1})\neq0\wedge d(\mathbf{p}_{0}\mathbf{p}_{1})=d\mathbf{p}_{0}\mathbf{p}_{2})\wedge\cdots\wedge d(\mathbf{p}_{0}\mathbf{p}_{1})=d(\mathbf{p}_{i}\mathbf{p}_{j})\wedge\cdots\wedge d(\mathbf{p}_{0}\mathbf{p}_{1})=d(\mathbf{p}_{n-1}\mathbf{p}_{n}),
\end{align*}

\noindent where the conjunction ranges over all $0\leqslant i<j\leqslant n$.\smallskip{}

\noindent\textbf{Axiom D7-n}. Dimension upper bound (at most $n$)
\begin{align*}
\begin{aligned}\end{aligned}
 & \forall\mathbf{p}_{0}\forall\mathbf{p}_{1}\forall\cdots\forall\mathbf{p}_{n+1}\\
 & d(\mathbf{p}_{0}\mathbf{p}_{1})=d\mathbf{p}_{0}\mathbf{p}_{2})\wedge\cdots\wedge d(\mathbf{p}_{0}\mathbf{p}_{1})=d(\mathbf{p}_{i}\mathbf{p}_{j})\wedge\cdots\wedge d(\mathbf{p}_{0}\mathbf{p}_{1})=d(\mathbf{p}_{n}\mathbf{p}_{n+1})\\
 & \rightarrow d(\mathbf{p}_{0}\mathbf{p}_{1})=0,
\end{align*}

\noindent where the conjunction ranges over all $0\leqslant i<j\leqslant n+1$. 

We will not discuss further in this direction.

\vspace{6pt}
\funding{This research received no external funding.}
\dataavailability{No new data were created or analyzed in this study. Data sharing is not applicable to this article.}

\acknowledgments{The author thanks Michael Beeson, Julien Narboux, and Pierre Boutry for valuable discussions, and the anonymous reviewers for their careful reading of the paper and their insightful comments.}

\conflictsofinterest{The author declares no conflicts of interest.}

\appendixtitles{yes} 
\appendixstart
\appendix
\section[\appendixname~\thesection]{Axioms of Real Closed Fields (RCF) \cite{Epstein}\label{appendix:a}}
\medskip{}

\noindent \textbf{\emph{The Theory of Real Closed Fields in the Language
of $L(=,+,\cdot,<,0,1)$}}

\medskip{}

Note that $0$ and $1$ are constant symbols, denoting the identity
of addition and the identity of multiplication respectively. The constant
$0$ is definable by the formula $x+x=x$, and the constant $1$ is
definable by $x\cdot x=x\wedge\neg(x+x=x)$. We can have an equivalent
theory in the language of $L(=,<,+,\cdot)$ after replacing the constants
0 and 1 with the above definitions, but we choose to include these
constants in the language for convenience.\medskip{}

\noindent RCF1. $\forall x\forall y\forall z\,\,x+(y+z)=(x+y)+z$

\noindent RCF2. $\forall x\,\,0+x=x$

\noindent RCF3. $\forall x\exists y\,\,x+y=0$

\noindent RCF4. $\forall x\forall y\,\,x+y=y+x$

\noindent RCF5. $\forall x\,\,x\cdot1=x$

\noindent RCF6. $\forall x\forall y\forall z\,\,x\cdot(y\cdot z)=(x\cdot y)\cdot z$

\noindent RCF7. $\forall x\forall y\,\,x\cdot y=y\cdot x$

\noindent RCF8. $\forall x\exists y\,[x\ne0\rightarrow x\cdot y=1]$

\noindent RCF9. $\forall x\forall y\forall z\,\,x\cdot(y+z)=(x\cdot y)+(x\cdot z)$

\noindent RCF10. $1\ne0$

\noindent RCF11. $\forall x\,\,\neg(x<x)$

\noindent RCF12. $\forall x\forall y\,[x<y\rightarrow\neg(y<x)]$

\noindent RCF13. $\forall x\forall y\forall z\,[(x<y\wedge y<z)\rightarrow x<z]$

\noindent RCF14. $\forall x\forall y\,\,x<y\vee y<x\vee x=y$

\noindent RCF15. $\forall x\forall y\forall z\,[y<z\rightarrow x+y<x+z]$

\noindent RCF16. $\forall x\forall y\,[(0<x\wedge0<y)\rightarrow0<x\cdot y]$

\noindent RCF17. $[\forall x\forall y\,\,\,\Phi(x)\wedge\Psi(y)\rightarrow x<y]\rightarrow[\exists z\forall x\forall y\,\,\,\Phi(x)\wedge\Psi(y)\rightarrow x\leqslant z\leqslant y]$

\noindent where $\Phi$ is any \emph{wff} in which $x$ is free, $\Psi$
is any \emph{wff} in which $y$ is free, and neither $y$ nor $z$
appear in $\Phi$, and neither $x$ nor $z$ appear in $\Psi$. ($x\leqslant z$
is the abbreviation for $x<z\vee x=z$.)

\section[\appendixname~\thesection]{Tarski's Axioms of Plane Euclidean Geometry $\mathscr{E}_{2}$
\cite{Tarski-2,Schwabhauser}\label{appendix:b}}

The following are Tarski's axioms for $\mathscr{E}_{2}$ as listed
in \cite{Tarski-2,Schwabhauser}. Tarski included thirteen axioms,
A1 through A13, in his 1959 paper \cite{Tarski-1}, but A2 and A3
were proved as theorems later. The remaining axioms in \cite{Tarski-1}
are the same as listed below, only in different order.\medskip{}

\noindent \textbf{\emph{The Theory $\mathscr{E}_{2}$ in the Language of}
$L(=,B,D)$}

\medskip{}

$B(\mathbf{pvq})$ denotes that point $\mathbf{v}$ is between $\mathbf{p}$
and $\mathbf{q}$.

$D(\mathbf{pquv})$ denotes that point pair $\mathbf{pq}$ is congruent
to point pair $\mathbf{uv}$.

~

\noindent T1. Symmetry of congruence\smallskip{}

$\forall\mathbf{p}\forall\mathbf{q}\,D(\mathbf{pqqp})$\smallskip{}

\noindent T2. Transitivity of congruence\smallskip{}

$\forall\mathbf{p}\forall\mathbf{q}\forall\mathbf{s}\forall\mathbf{t}\forall\mathbf{u}\forall\mathbf{v}$

$D(\mathbf{pqst})\,\wedge\,D(\mathbf{pquv})\rightarrow D(\mathbf{stuv})$\smallskip{}

\noindent T3. Identity of congruence\smallskip{}

$\forall\mathbf{p}\forall\mathbf{q}\forall\mathbf{t}$

$D(\mathbf{pqtt})\rightarrow\mathbf{p}=\mathbf{q}$\smallskip{}

\noindent T4. Segment extension\smallskip{}

$\forall\mathbf{p}\forall\mathbf{q}\forall\mathbf{u}\forall\mathbf{v}\exists\mathbf{a}$

$B(\mathbf{pqa})\wedge D(\mathbf{qauv})$\smallskip{}

\noindent T5. Five segments\smallskip{}

$\forall\mathbf{a}\forall\mathbf{b}\forall\mathbf{c}\forall\mathbf{d}\forall\mathbf{\mathbf{a}'}\forall\mathbf{\mathbf{b}'}\forall\mathbf{c}'\forall\mathbf{\mathbf{d}'}$

$\mathbf{a}\neq\mathbf{b}\wedge B(\mathbf{abd})\wedge B(\mathbf{\mathbf{a}'\mathbf{b}'d'})\wedge D(\mathbf{aca'c'})\wedge D(\mathbf{aba'b'})\wedge D(\mathbf{bcb'c'})\wedge D(\mathbf{bdb'd'})$

$\rightarrow D(\mathbf{cdc'd'})$
\vspace{-12pt}
\begin{center}
\includegraphics[scale=0.2]{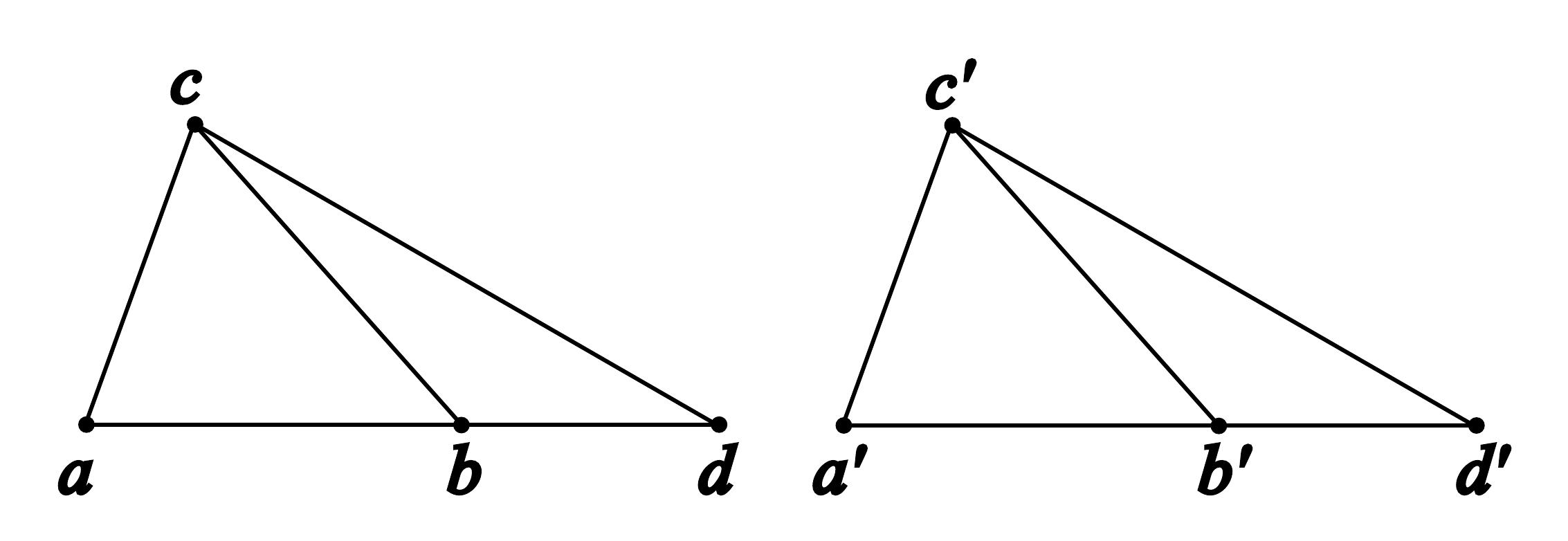}
\par\end{center}

\vspace{-18pt}
\begin{center}
\captionsetup{width=0.3\textwidth}
\captionof{figure}{Five segments.}
\end{center}
\vspace{-5pt}

\noindent T6. Identity of betweenness\smallskip{}

$\forall\mathbf{p}\forall\mathbf{t}\,B(\mathbf{ptp})\rightarrow\mathbf{p}=\mathbf{t}$

\smallskip{}

\noindent T7. Pasch (inner form)\smallskip{}

$\forall\mathbf{a}\forall\mathbf{b}\forall\mathbf{c}\forall\mathbf{p}\forall\mathbf{q}$

$B(\mathbf{bcp})\wedge B(\mathbf{aqb})]\rightarrow\exists\mathbf{t}\,\,[B(\mathbf{ptq})\wedge B(\mathbf{atc})]$
\begin{center}
\includegraphics[scale=0.2]{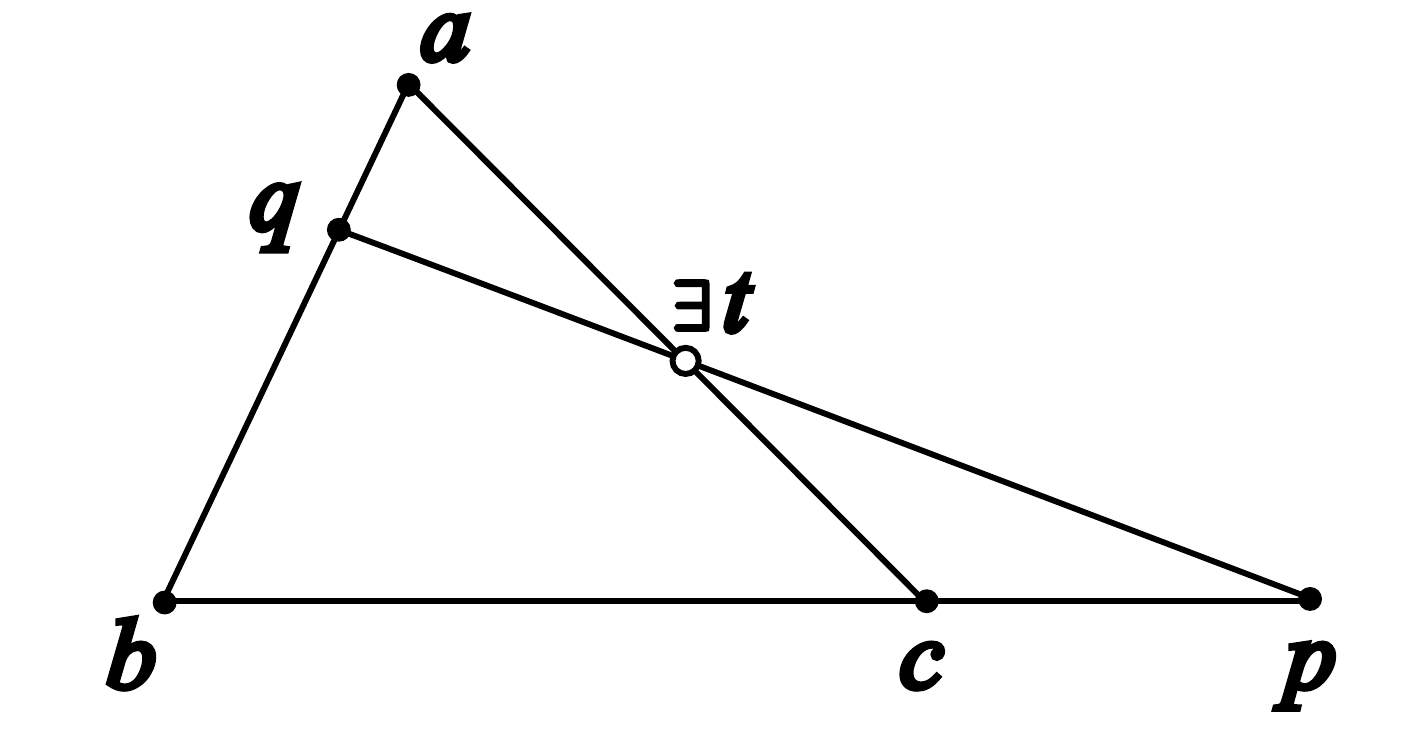}
\par\end{center}

\vspace{-12pt}
\begin{center}
\captionsetup{width=0.25\textwidth}
\captionof{figure}{Inner Pasch.}
\end{center}
\vspace{-5pt}

\noindent T8. Dimension lower bound\smallskip{}

$\exists\mathbf{a}\exists\mathbf{b}\exists\mathbf{c}\,\neg B\mathbf{(abc})\wedge\neg B(\mathbf{bca})\wedge\neg B(\mathbf{cab})$\smallskip{}

\noindent T9. Dimension upper bound\smallskip{}

$\forall\mathbf{p}\forall\mathbf{q}\forall\mathbf{t}\forall\mathbf{u}\forall\mathbf{v}$

$\mathbf{p}\ne\mathbf{q}\wedge D(\mathbf{tptq})\wedge D(\mathbf{upuq})\wedge D(\mathbf{vpvq})\rightarrow B(\mathbf{tuv})\vee B(\mathbf{uvt})\vee B(\mathbf{vtu})$\medskip{}

\begin{center}
\includegraphics[scale=0.2]{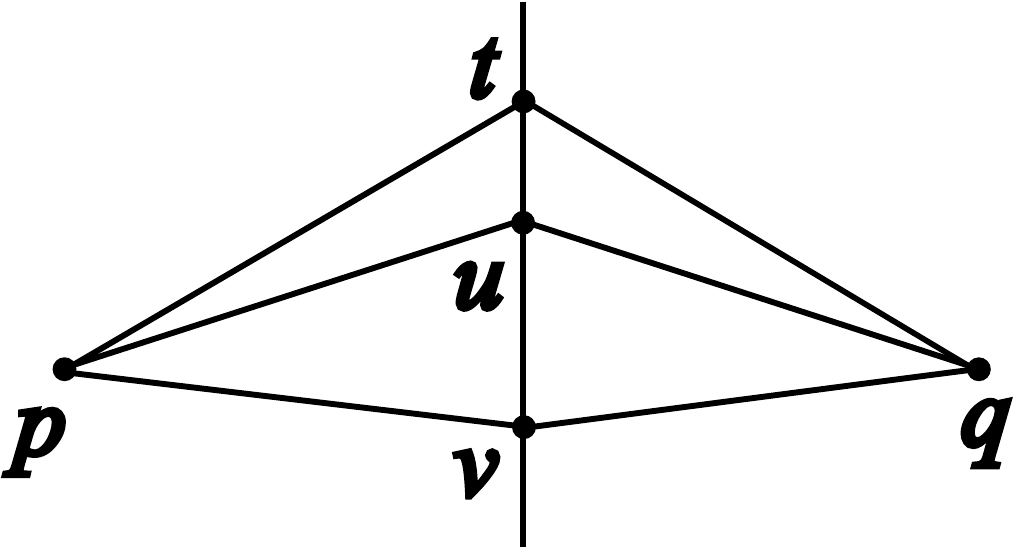}
\par\end{center}

\vspace{-12pt}
\begin{center}
\captionsetup{width=0.39\textwidth}
\captionof{figure}{The dimension is at most 2.}
\end{center}
\vspace{-5pt}

\noindent T10. Euclid\smallskip{}

$\forall\mathbf{a}\forall\mathbf{b}\forall\mathbf{c}\forall\mathbf{u}\forall\mathbf{v}$

$\exists\mathbf{p}\exists\mathbf{q}\,\mathbf{a}\neq\mathbf{u}\,B(\mathbf{buc})\wedge B(\mathbf{auv})\rightarrow B(\mathbf{pvq})\wedge B(\mathbf{abp})\wedge B(\mathbf{acq})$\bigskip{}
\vspace{-4pt}
\begin{center}
\includegraphics[scale=0.2]{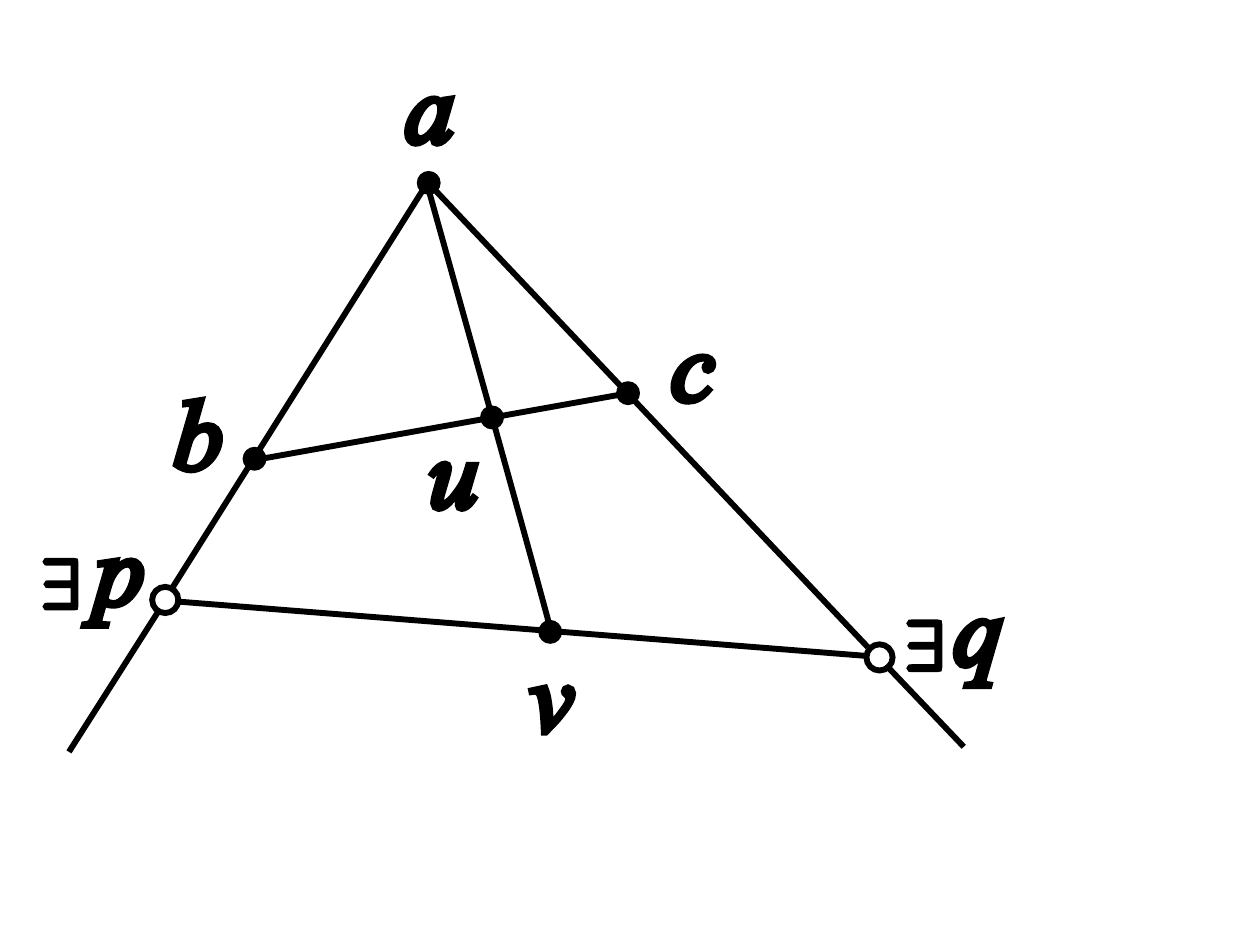}
\par\end{center}

\vspace{-18pt}
\begin{center}
\captionsetup{width=0.2\textwidth}
\captionof{figure}{Euclid.}
\end{center}
\vspace{-5pt}

\noindent T11. Axiom schema of continuity (Dedekind cut)

$\left[\exists\mathbf{a}\forall\mathbf{p}\forall\mathbf{q}\,\,\,\Phi(\mathbf{p})\wedge\Psi(\mathbf{q})\rightarrow B(\mathbf{apq})\right]\rightarrow\left[\exists\mathbf{b}\forall\mathbf{p}\forall\mathbf{q}\,\,\,\Phi(\mathbf{p})\wedge\Psi(\mathbf{q})\rightarrow B(\mathbf{pbq})\right]$,\smallskip{}

\noindent where $\Phi$ stands for any formula in which the variable
$\mathbf{p}$ occurs free but neither $\mathbf{q}$ nor $\mathbf{a}$
nor $\mathbf{b}$ occurs free, and similarly for $\Psi$ with $\mathbf{p}$
and $\mathbf{q}$ interchanged.
\begin{center}
\includegraphics[scale=0.25]{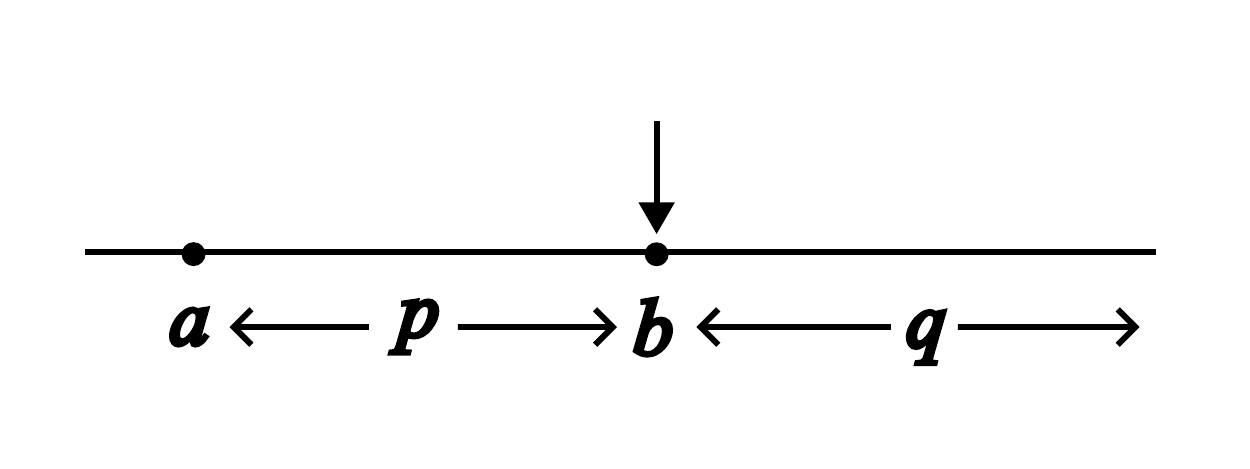}
\par\end{center}

\vspace{-18pt}
\begin{center}
\captionsetup{width=0.22\textwidth}
\captionof{figure}{Continuity.}
\end{center}
\vspace{-5pt}

\section[\appendixname~\thesection]{SMGS Axioms of Euclidean Geometry \cite{SMSG} \label{appendix:c}}

\medskip{}

\noindent Undefined terms: point, line, plane.

The officially listed undefined terms are: point, line, plane. However,
there are also undefined terms (objects, predicates, and functions)
that are used in the axioms but are not explicitly listed, such as:
distance measure, angle measure, area, volume, lie on, lie in, real
number, set, contain, correspondence, intersection, and union.

~

\noindent Postulate 1. (Line Uniqueness) Given any two different points,
there is exactly one line which contains them.\medskip{}

\noindent Postulate 2. (Distance Postulate) To every pair of different
points there corresponds a unique positive number. \medskip{}

\noindent Postulate 3. (Ruler Postulate) The points of a line can
be placed in a correspondence with the real numbers in such a way
that

\begin{tabular}{r>{\raggedright}p{0.8\columnwidth}}
(1) & To every point of the line there corresponds exactly one real number,\tabularnewline
(2) & To every real number there corresponds exactly one point of the line,
and\tabularnewline
(3) & The distance between two points is the absolute value of the difference
of the corresponding numbers.\tabularnewline
\end{tabular}

\medskip{}

\noindent Postulate 4. (Ruler Placement Postulate) Given two points
$P$ and $Q$ of a line, the coordinate system can be chosen in such
a way that the coordinate of $P$ is zero and the coordinate of $Q$
is positive.\medskip{}

\noindent Postulate 5. (Existence of Points)

\begin{tabular}{r>{\raggedright}p{0.8\columnwidth}}
a. & \noindent Every plane contains at least three non-collinear points.\tabularnewline
b. & \noindent Space contains at least four non-coplanar points.\tabularnewline
\end{tabular}

\medskip{}

\noindent Postulate 6. (Points on a Line Lie in a Plane) If two points
lie in a plane, then the line containing these points lies in the
same plane.\medskip{}

\noindent Postulate 7. (Plane Uniqueness) Any three points lie in
at least one plane, and any three non-collinear points lie in exactly
one plane.\medskip{}

\noindent Postulate 8. (Plane Intersection) If two different planes
intersect, then their intersection is a line.\medskip{}

\noindent Postulate 9. (Plane Separation Postulate) Given a line and
a plane containing it, the points of the plane that do not lie on
the line form two sets such that:

\begin{tabular}{r>{\raggedright}p{0.8\columnwidth}}
(1) & each of the sets is convex and\tabularnewline
(2) & if $P$ is in one set and $Q$ is in the other then segment $\overline{PQ}$
intersects the line.\tabularnewline
\end{tabular}\medskip{}

\noindent Postulate 10. (Space Separation Postulate) The points of
space that do not lie in a given plane form two sets such that

\begin{tabular}{r>{\raggedright}p{0.8\columnwidth}}
(1) & each of the sets is convex and\tabularnewline
(2) & if $P$ is in one set and $Q$ is in the other then segment $\overline{PQ}$
intersects the plane.\tabularnewline
\end{tabular}

\medskip{}

\noindent Postulate 11. (Angle Measurement Postulate) To every angle
$\angle BAC$ there corresponds a real number between $0$ and $180$.\medskip{}

\noindent Postulate 12. (Angle Construction Postulate) Let $\overrightarrow{AB}$
be a ray on the edge of the half-plane $H$. For every $r$ between
0 and 180 there is exactly one ray $\overrightarrow{AP}$, with $P$
in $H$, such that $m\angle PAB=r$.\medskip{}

\noindent Postulate 13. (Angle Addition Postulate) If $D$ is a point
in the interior of $\angle BAC$, then \\
$m\angle BAC=m\angle BAD+m\angle DAC$.\medskip{}

\noindent Postulate 14. (Supplement Postulate) If two angles form
a linear pair, then they are supplementary.\medskip{}

\noindent Postulate 15. (SAS Postulate) Given a correspondence between
two triangles (or between a triangle and itself). If two sides and
the included angle of the first triangle are congruent to the corresponding
parts of the second triangle, then the correspondence is a congruence.\medskip{}

\noindent Postulate 16. (Parallel Postulate) Through a given external
point there is at most one line parallel to a given line.\medskip{}

\noindent Postulate 17. (Area of Polygonal Region) To every polygonal
region there corresponds a unique positive number.\medskip{}

\noindent Postulate 18. (Area of Congruent Triangles) If two triangles
are congruent, then the triangular regions have the same area.\medskip{}

\noindent Postulate 19. (Summation of Areas of Regions) Suppose that
the region $R$ is the union of two regions $R_{1}$ and $R_{2}$.
Suppose that $R_{1}$ and$R_{2}$ intersect at most in a finite number
of segments and points. Then the area of $R$ is the sum of the areas
of $R_{1}$ and $R_{2}$.\medskip{}

\noindent Postulate 20. (Area of a Rectangle) The area of a rectangle
is the product of the length of its base and the length of its altitude.\medskip{}

\noindent Postulate 21. (Volume of Rectangular Parallelpiped) The
volume of a rectangular parallelpiped is equal to the product of the
altitude and the area of the base.\medskip{}

\noindent Postulate 22. (Cavalieri's Principle) Given two solids and
a plane. If for every plane which intersects the solids and is parallel
to the given plane the two intersections have equal areas, then the
two solids have the same volume.

\section[\appendixname~\thesection]{The Convoluted Statement of the Pythagorean Theorem in Tarski's $\mathscr{E}_{2}$ \label{appendix:d}}

In Section \ref{sec:Section-1}, we noted that SST \cite{Schwabhauser}
presents the Pythagorean theorem (Theorem 15.8) in the form
\begin{equation}
\overline{\mathbf{ab}}^{2}+\overline{\mathbf{ac}\vphantom{b}}^{2}=\overline{\mathbf{bc}}^{2}.\label{eq:Pythagorean-1-1}
\end{equation}
At first glance, this may seem to be the familiar modern algebraic
form of the Pythagorean theorem; however, it is not, because the language
of $\mathscr{E}_{2}$ cannot express the length of a segment as
a number. The terms $\overline{\mathbf{ab}}^{2}$, $\overline{\mathbf{ac}\vphantom{b}}^{2}$,
and $\overline{\mathbf{bc}}^{2}$ are all abbreviations. The term
$\overline{\mathbf{ab}}^{2}$ means the \emph{geometric product} of
$\overline{\mathbf{ab}}$ and $\overline{\mathbf{ab}}$, denoted by $\overline{\mathbf{ab}}\,\hat{\cdot}\,\overline{\mathbf{ab}}$.
Precisely, Equation (\ref{eq:Pythagorean-1-1}) should be expanded
as
\[
\overline{\mathbf{ab}}\,\hat{\cdot}\,\overline{\mathbf{ab}}\,\hat{+}\,\overline{\mathbf{ac}\vphantom{b}}\,\hat{\cdot}\,\overline{\mathbf{ac}\vphantom{b}}=\overline{\mathbf{bc}}\,\hat{\cdot}\,\overline{\mathbf{bc}},
\]
where $\hat{+}$ and $\hat{\cdot}$ denote\emph{ geometric sum} and
\emph{geometric product} respectively. These are defined by parallel
projections using the segment arithmetic (see the definitions in Section
\ref{sec:Tarski's-Interpretable}).

In what follows, we present the complete sentence corresponding to
Equation (\ref{eq:Pythagorean-1-1}) after these abbreviations are
fully expanded.

Let $\triangle\mathbf{abc}$ be a triangle with $\angle\mathbf{bac}$
being a right angle (Figure \ref{fig:Hilbert-Schwabhauser}).

\noindent (1) \hspace{6pt}Choose an arbitrary point $\mathbf{e}_{1}$
on $\mathbf{ba}$. We will take $\mathbf{be}_{1}$ as the unit length.

On line $\mathbf{bc}$, locate a point $\mathbf{a'}$ such that $D(\mathbf{ba'ba})$.
Join $\mathbf{e}_{1}$ and $\mathbf{a'}$ with a line.

Through $\mathbf{a}$, draw a line parallel to $\mathbf{a'e}_{1}$,
and let it intersect $\mathbf{bc}$ at $\mathbf{s}_{1}$.

By the definition of geometric product, $\mathbf{bs_{1}=\overline{\mathbf{ab}}^{2}}$.

\noindent (2) \hspace{6pt}Join $\mathbf{e}_{1}$ and $\mathbf{c}$
with a line.

On line $\mathbf{ba}$, locate a point $\mathbf{c'}$ such that $D(\mathbf{\mathbf{bc'bc}})$.

Through $\mathbf{c'}$, draw a line parallel to $\mathbf{ce_{1}}$,
and let it intersect $\mathbf{bc}$ at $\mathbf{s}_{3}$.

By definition, $\mathbf{bs}_{3}=\overline{\mathbf{bc}}^{2}$.

\noindent (3) \hspace{6pt}On $\mathbf{ca}$, find a point $\mathbf{e}_{2}$
such that $D(\mathbf{ce}_{2}\mathbf{be}_{1})$. Hence, $\mathbf{ce}_{2}$
also has unit length.

On line $\mathbf{bc}$, locate a point $\mathbf{a''}$ such that $D(\mathbf{ca''ca})$.

Join $\mathbf{a''}$ and $\mathbf{e}_{2}$ with a line.

Through $\mathbf{a}$, draw a line parallel to $\mathbf{a''e}_{2}$,
and let it intersect $\mathbf{bc}$ at $\mathbf{s}_{2}$.

By definition, $\mathbf{cs}_{2}=\overline{\mathbf{ac}\vphantom{b}}^{2}$.

Finally, Equation (\ref{eq:Pythagorean-1-1}) can be expressed as
$\mathbf{bs}_{1}\hat{+}\mathbf{cs}_{2}=\mathbf{bs}_{3}$. To prove
this theorem, it suffices to show $\mathbf{s}_{1}\mathbf{c}$ is congruent
to $\mathbf{s}_{3}\mathbf{s}_{2}$. 

\begin{adjustwidth}{-\extralength}{0cm}
\begin{center}
\begin{minipage}{1.28\textwidth}
\includegraphics[scale=0.18]{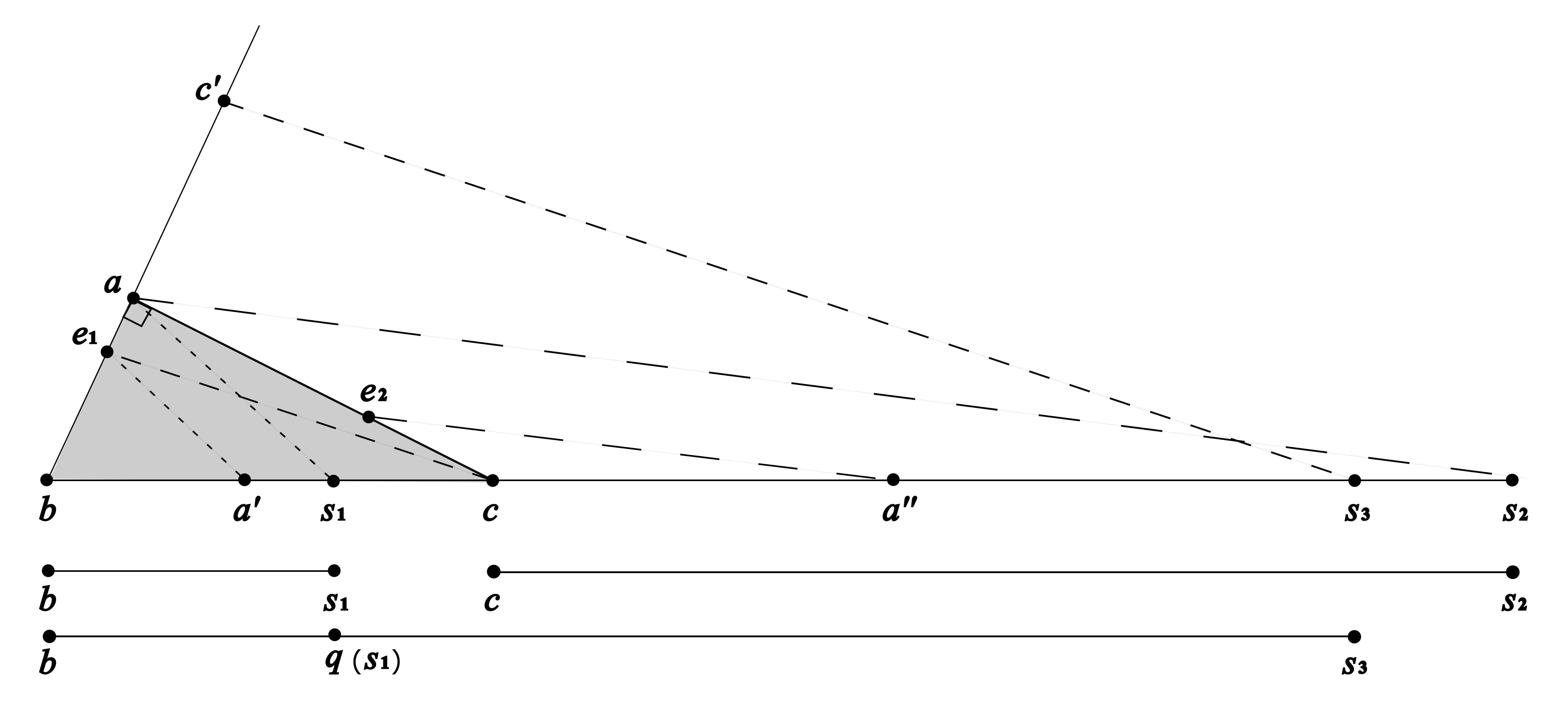}
\end{minipage}
\end{center}
\end{adjustwidth}
\hspace{-112pt}
\begin{minipage}{1.25\textwidth}
\captionof{figure}{The convoluted statement of the Pythagorean theorem using segment arithmetic: Segment $\mathbf{bs_3}$ is equal to the sum of $\mathbf{bs_1}$ and $\mathbf{cs_2}$.}
\label{fig:Hilbert-Schwabhauser}
\end{minipage}
\vspace{10pt}

The preceding description is informal. When the Pythagorean theorem
in the form of Equation (\ref{eq:Pythagorean-1-1}) is expressed in
the formal language of $L(\mathscr{E}_{2})$, it becomes:
\begin{align}
 & \forall\mathbf{a}\forall\mathbf{b}\forall\mathbf{c}\forall\mathbf{a'}\forall\mathbf{a''}\forall\mathbf{c'}\forall\mathbf{e}_{1}\forall\mathbf{e}_{2}\forall\mathbf{s}_{1}\forall\mathbf{s}_{2}\forall\mathbf{s}_{3}\nonumber \\
 & R(\mathbf{bac})\wedge[B(\mathbf{be}_{1}\mathbf{a})\vee B(\mathbf{bae}_{1})]\wedge[B(\mathbf{ce}_{2}\mathbf{a})\vee B(\mathbf{cae}_{2})]\nonumber \\
 & \wedge D(\mathbf{be}_{1}\mathbf{ce}_{2})\wedge D(\mathbf{ba'ba})\wedge D(\mathbf{\mathbf{bc'bc}})\wedge D(\mathbf{ca''ca})\nonumber \\
 & \wedge L(\mathbf{s}_{1}\mathbf{bc})\wedge L(\mathbf{s}_{2}\mathbf{bc})\wedge L(\mathbf{s}_{3}\mathbf{bc})\label{eq:Long-Pythagorean}\\
 & \wedge[\mathbf{as}_{1}\parallel\mathbf{a'e}_{1}]\wedge[\mathbf{as}_{2}\parallel\mathbf{a''e}_{2}]\wedge[\mathbf{c's}_{3}\parallel\mathbf{ce}_{1}]\nonumber \\
 & \rightarrow\exists\mathbf{q}\,\,\,\,B(\mathbf{bqs}_{3})\wedge D(\mathbf{bqbs}_{1})\wedge D(\mathbf{qs}_{3}\mathbf{cs}_{2}),\nonumber 
\end{align}
where
\begin{itemize}
\item $R(\mathbf{bac})$ is the abbreviation that $\mathbf{\angle bac}$
is a right angle (see Definition \ref{def:Perpendicular} in Section
\ref{sec:Theory-Ed});
\item $\mathbf{pq}\parallel\mathbf{uv}$ (denoting that $\mathbf{pq}$ is
parallel to $\mathbf{uv}$) is the abbreviation of
\[
\mathbf{pq}\parallel\mathbf{uv}:=\,\,\,\,\,\,\mathbf{p}\neq\mathbf{q}\wedge\mathbf{u}\neq\mathbf{v}\wedge\neg\exists\mathbf{t}\,[L(\mathbf{tpq})\wedge L(\mathbf{tuv})],
\]
where $L(\mathbf{puv})$ (denoting that $\mathbf{p},\mathbf{u},\mathbf{v}$
are collinear) is the abbreviation of (recall \mbox{Definition \ref{def:Collinear}}
in Section \ref{sec:Theory-Ed}):
\[
L(\mathbf{puv}):=\,\,\,\,\,\,B(\mathbf{puv})\vee B(\mathbf{uvp})\vee B(\mathbf{upv}).
\]
\end{itemize}
Equation (\ref{eq:Long-Pythagorean}) is a convoluted formula. In
this form, the Pythagorean theorem is not the same theorem as Proposition
I.47 in Euclid's \emph{Elements}, and neither theorem implies the
other without establishing a nontrivial connection between area and
segment arithmetic (see Hartshorne \citep{Hartshorne} (p.~179)).

The notion of geometric product employed here is a variant of Hilbert\textquoteright s
definition, using a skewed frame rather than an orthogonal frame. Although
it is not exactly the definition given by SST \cite{Schwabhauser},
it is equivalent to it. Had we adopted their definition verbatim (see
Figure \ref{def:Product-Schwabhauser} in Section \ref{sec:Tarski's-Interpretable}),
the formal statement of the Pythagorean theorem would have been even
longer.

In contrast, the language of $\mathscr{E}_{d}$ permits the Pythagorean
theorem to be expressed in the simple form
\[
d^{2}(\mathbf{ab})+d^{2}(\mathbf{ac})=d^{2}(\mathbf{bc}).
\]
This provides part of the motivation for the present paper.

\begin{adjustwidth}{-\extralength}{0cm}
\reftitle{References}





\PublishersNote{}
\end{adjustwidth}
\end{document}